\newtheorem{theorem}{Theorem}[section]
\newtheorem{remark}[theorem]{Remark}
\newtheorem{lemma}[theorem]{Lemma}
\newtheorem{proposition}[theorem]{Proposition}
\newtheorem{ex}[theorem]{Example}
\newtheorem{definition}[theorem]{Definition}
\newcommand{\norm}  [1]{\ensuremath{\left  \|       #1  \right \|       }}
\newcommand{\rev}[1]{\textcolor{black}{#1}}
\newcommand{\revv}[1]{\textcolor{black}{#1}}
\newcommand{\revvv}[1]{\textcolor{black}{#1}}
\newcommand{\cl}  {{\rm cl  \,}}
\newcommand{\bd}  {{\rm bd \,}}
\newcommand{\Int} {{\rm int \,}}
\newcommand{\conv}  {{\rm conv \,}}
\newcommand{\cone}{{\rm co\,}}
\newcommand{\Vused}{V_{\textnormal{used}}\,} 
\newcommand{\Vinfo}{V^k_{\textnormal{info}}\,} 
\newcommand{\X}{\mathcal{X}}
\newcommand{\R}{\mathbb{R}}
\renewcommand{\P}{\mathcal P}
\newcommand{\T}{\mathsf T}
\newcommand{\Minc}{{\rm Min}_C\,}
\newcommand{\wMinc}{{\rm wMin}_C\,}
\newcommand{\recc}{{\rm rec \,}}
\DeclareMathOperator*{\argmin}{arg\,min}
\DeclareMathOperator*{\argmax}{arg\,max}
\newenvironment{procedure}[1][htb]{%
	\renewcommand{\ALG@name}{Procedure}
	\begin{algorithm}[#1]%
	}{\end{algorithm}}
\author{\.Irem Nur Keskin \thanks{Fuqua School of Business, Duke University, Durham, NC 27708, USA, iremnur.keskin@duke.edu} 
	\and Firdevs Ulus \thanks{Bilkent University, Department of Industrial Engineering, Ankara, 06800 Turkey, firdevs@bilkent.edu.tr}}
\title{Outer approximation algorithms for convex vector optimization problems}
\date{\today}
\begin{document}
	\maketitle
	
	\begin{abstract} \noindent	
		In this study, we present a general framework of outer approximation algorithms to solve convex vector optimization problems, in which the Pascoletti-Serafini (PS) scalarization is solved iteratively. This scalarization finds the minimum `distance’ from a reference point, which is usually taken as a vertex of the current outer approximation, to the upper image through a given direction. We propose efficient methods to select the parameters (the reference point and direction vector) of the PS scalarization and analyze the effects of these on the overall performance of the algorithm. Different from the existing vertex selection rules from the literature, the proposed methods do not require solving additional single-objective optimization problems.	Using some test problems, we conduct an extensive computational study where three different measures are set as the stopping criteria: the approximation error, the runtime, and the cardinality of \revv{the} solution set. We observe that the proposed variants have satisfactory results\revv{,} especially in terms of runtime compared to the existing variants from the literature.		
		\medskip
		
		\noindent
		{\bf Keywords:} Multiobjective optimization, convex vector optimization, approximation algorithms, Pascoletti-Serafini scalarization.\\
		\textbf{Mathematics Subject Classification (2020):} 90B50, 90C25, 90C29. 
		
		\medskip
		
	\end{abstract}

	\section{Introduction} \label{ch:intro}
	
		Multiobjective optimization refers to optimizing \rev{multiple} conflicting objectives simultaneously \rev{and it is a useful tool} for many \rev{applications} in various fields from finance to engineering \rev{since, by the nature of applications,} \revv{there may be a trade-off among several objectives}. For a \rev{multiobjective optimization problem (MOP)}, there is no single solution optimizing all objectives. Instead, feasible solutions that cannot be improved in one objective without deteriorating in at least another objective, namely \emph{efficient solutions}, are of interest. \rev{In many applications, instead of finding the set of all efficient solutions, it is sufficient to obtain the images of efficient solutions, namely the} \emph{nondominated points}, in the objective space. 
	
	An optimization problem that requires \revv{minimizing or maximizing} a vector-valued objective function with respect to a partial order induced by an ordering cone $C$ is referred to as \revv{a} vector optimization problem (VOP) \rev{and it is also widely used in different fields: for instance, see \cite{Hamel,Rudloff2021} for applications in financial mathematics.} MOPs can be seen as special instances of VOPs where the ordering cone is the nonnegative orthant. The terminology \rev{in vector optimization is slightly different from the multiobjective optimization terminology. In particular, for a minimization problem where the ordering cone is $C$, a \emph{minimizer} is a feasible solution that cannot be improved with respect to the order relation induced by $C$. The image of a minimizer is then a \emph{$C$-minimal point} in the objective space.} 
	
	
	One of the most common approaches to \rev{obtain} a minimizer for a VOP is to solve \rev{a scalarization problem}, which is a single objective optimization problem \rev{induced by the VOP}. In general, a scalarization model is parametric and has the potential to generate a `representative' set of minimizers when solved for different set\revv{s} of parameters. Throughout, two scalarization methods will be used. The first one is the well-known weighted sum scalarization \cite{gass1955}, which is performed by optimizing the weighted sum of the objectives over the original feasible region. In addition, we use the Pascoletti-Serafini (PS) scalarization \cite{pascolettiSerafini1984}, which aims to find the closest $C$-minimal point from a given reference point through a given direction parameter. Unlike weighted sum scalarization, it has \revv{the} potential to find all minimizers of a given VOP even if the problem is not convex.
	
	In this paper, we focus on convex vector optimization problems (CVOPs). In the literature,  there are iterative algorithms utilizing scalarization models to \rev{generate an approximation of} the set of all $C$-minimal points in the objective space. To the best of our knowledge, the first such algorithm is proposed in 1998, by Benson, \cite{benson} and it \revv{is} designed to solve linear MOPs. It generates the set of all nondominated points \rev{in the objective space} by iteratively obtaining improved polyhedral outer approximations of it. Later, this algorithm is generalized to solve linear VOPs; moreover, using geometric duality results, a geometric dual counterpart of this algorithm is also proposed, see \cite{lohne}. In 2011, Ehrgott et al. \cite{ehrgottSS2011} extended the linear VOP algorithm from \cite{lohne} to solve CVOPs. \revvv{Then, in 2014, L\"ohne et al. \cite{cvop2014} proposed a similar algorithm 
		and a geometric dual variant.}
	
	\revvv{On the other hand, in 2003, Klamroth et al. \cite{Klamroth2002} proposed approximation algorithms for convex and non-convex MOPs. The mechanism of the outer approximation algorithm for the convex case is similar to Benson's algorithm \cite{benson}. 
		Different from \cite{benson}, in \cite{Klamroth2002} the selection for the reference point for the scalarization is not arbitrary. 
		In \cite{Klamroth2002}, there is also an inner approximation algorithm for the convex problems and 
		the convergence rates of both algorithms are provided for the biobjective case.
	}
	
	\revvv{Recently, Dörfler et al. \cite{Dorfler2020} proposed a variant {of} Benson's algorithm for CVOPs that includes a vertex selection procedure {that} also yields a direction parameter for the PS scalarization. 
		To compute the parameters, 
		the algorithm solves quadratic programming problems for the vertices of the current outer approximation.}
	
	\revvv{In addition to the algorithms which solve PS scalarization (or equivalent models), recently Ararat et al. \cite{umer2020} proposed an outer approximation algorithm {that} solves norm-minimizing scalarizations. This scalarization does not require a direction parameter but only a reference point, which is again selected among the vertices of the current outer approximation. }

	The aforementioned algorithms differ in terms of the selection procedures for the parameters of the scalarization. The ones from \cite{Klamroth2002,Dorfler2020} require solving additional models to select a vertex at each iteration. This feature enables them to provide the current approximation error at each iteration of the algorithm at the cost of solving \revv{a} considerable number of models. The rest of the algorithms do not solve additional models and they provide the approximation error after termination. For the selection of the direction parameter, \cite{Klamroth2002, ehrgottSS2011} use a fixed point from the upper image; \cite{Dorfler2020} uses the point from the inner approximation that yield\revv{s} the minimum distance to the selected vertex; and \cite{cvop2014} uses a fixed direction from the interior of the ordering cone through the algorithm. \Cref{table:alg} summarizes these properties.
	
		\begin{table}[h] 
		\centering
		\resizebox{\textwidth}{!}{
			\begin{tabular}{cccccc}
				\textbf{Algorithm} & \textbf{\begin{tabular}[c]{@{}c@{}}Finiteness / \\ Convergence\end{tabular}} & \textbf{\begin{tabular}[c]{@{}c@{}}Choice of \\ Direction\end{tabular}} & \textbf{\begin{tabular}[c]{@{}c@{}}Vertex\\ Selection (VS)\end{tabular}}   & \textbf{\begin{tabular}[c]{@{}c@{}}Models \\ Solved for VS\end{tabular}} & \textbf{\begin{tabular}[c]{@{}c@{}}Approximation \\ Error\end{tabular}} \\ \hline \hline
				Klamroth et al. \cite{Klamroth2002}   & \begin{tabular}[c]{@{}c@{}}Convergence \\ for biobjective           \end{tabular}                                                           & \begin{tabular}[c]{@{}c@{}}Inner point\\ (fixed)\end{tabular}        & \begin{tabular}[c]{@{}c@{}}Distance \\ to upper image\end{tabular}     & \begin{tabular}[c]{@{}c@{}}Gauge-based \\ Model\end{tabular}                     & \begin{tabular}[c]{@{}c@{}}At each \\ iteration\end{tabular}                      \\ \hline
				Ehrgott et al. \cite{ehrgottSS2011}    & -                                                                      & \begin{tabular}[c]{@{}c@{}}Inner point\\ (fixed)\end{tabular}           & Arbitrary                                                                  & -                                                                                & \begin{tabular}[c]{@{}c@{}}After \\ Termination\end{tabular}                                                                         \\ \hline
				Löhne et al. \cite{cvop2014}      & -                                                                      & Fixed                                                                   & Arbitrary                                                                  & -                                                                                & \begin{tabular}[c]{@{}c@{}}After \\ Termination\end{tabular}                                                                         \\ \hline
				Dörfler et al. \cite{Dorfler2020}     & -                                                                      & \begin{tabular}[c]{@{}c@{}}Inner point\\ (changing)\end{tabular}        & \begin{tabular}[c]{@{}c@{}}Distance to inner \\ approximation\end{tabular} & \begin{tabular}[c]{@{}c@{}}Quadratic \\ Model\end{tabular}                       & \begin{tabular}[c]{@{}c@{}}At each \\ iteration\end{tabular}                      \\ \hline
				Ararat et al. \cite{umer2020}     & Finiteness                                                                    & \begin{tabular}[c]{@{}c@{}}Not \\ Relevant\end{tabular}        & \begin{tabular}[c]{@{}c@{}}Arbitrary\end{tabular} & \begin{tabular}[c]{@{}c@{}} - \end{tabular}                       & \begin{tabular}[c]{@{}c@{}}After \\ Termination\end{tabular}  
				\\ \hline		\end{tabular}}
		\caption{Existing outer approximation algorithms to solve CVOPs}
		\label{table:alg}
	\end{table}

\rev{The main contributions of this paper can be listed as follows: (1) We} present a general framework of the outer approximation algorithms from the literature\rev{; (2) propose various additional variants, which \revv{select the two parameters of the PS scalarizations in structured and efficient ways}; and (3) compare the performances through numerical tests. In particular,} after proposing different direction selection rules and conducting a preliminary computational study, we propose three vertex selection rules. \revv{Different from the vertex selection rules from the literature, these rules are not based on solving additional single-objective optimization problems, hence \revvv{they are} computationally more efficient.} The first one benefits from the clustering of the vertices. The second rule selects the vertices using the adjacency information among them. For the last one, we employ a procedure \revv{that} creates local upper bounds to the nondominated points. This procedure is first introduced in the context of multiobjective combinatorial optimization, see for instance \cite{Klamroth2}, and by its design, it works only for convex MOPs. Together with the proposed variants, we also implement some of the algorithms from the literature and we provide an extensive computational study to observe the\revv{ir} behavior under different stopping conditions. 

In Sections \ref{ch:Prelim} and \ref{ch:problem}, we provide the preliminaries and the solution concepts together with the scalarization models used in the paper, respectively. In \Cref{ch:algorithm_variants}, we present the general framework of the outer approximation algorithm; \revv{the} construction of different direction and vertex selection rules; and similar algorithms from the literature. \revvv{We provide the test instances and some preliminary computational results in \Cref{sect:Comput_pre}.} \Cref{ch:comp_results} includes \revvv{the main} computational study where we compare the proposed variants with the algorithms from the literature. Finally, we conclude the paper in \Cref{ch:conclusion}. 
	
\section{Preliminaries}
\label{ch:Prelim}
\rev{In this section, we provide the \revv{notation} and definitions that are used throughout.} 

\rev{For $p\geq 2$,  $\R^p$ is the $p$ dimensional Euclidean space and $\norm{\cdot}$ is the Euclidean norm on $\R^p$. The closed ball around a point $a\in\R^p$ with radius $r> 0$ is denoted by $B(a,r):=\{y \in\R^p \mid \norm{y-a}\leq r\}.$ Moreover, we use the following notation: $e:=(1,\ldots,1)^\T \in \R^p$, $e^j\in \R^p$ is the unit vector with $j^{th}$ component being 1 and	$\R^p_+:= \{y \in \R^p \mid y\geq 0\}$.}

Let $S$ be a subset of $\R^p$. The convex hull, conic hull, interior, closure, and boundary of  $S$ \revv{are} denoted by $\conv S$, $\cone S$, $\Int S$, $\cl S$, and $\bd S$ respectively. \rev{A vector} $z\in\mathbb{R}^{p}\setminus \{0\}$ is a \textit{recession direction} of $S$, if $y+ \gamma z\in S$ for all \revv{$\gamma \geq 0, y\in S$}. The set of all recession directions of $S$ is the \emph{recession cone of $S$} and \revv{is} denoted by $\recc S$. 

\rev{Let $S \subseteq \R^p$ further be a convex set. A hyperplane given by $\{y \in \R^p \mid a^\T y = b\}$ for some $a\in\R^p\setminus\{0\}, b\in \R$ is a \emph{supporting hyperplane} of \revv{$S$ if} $S \subseteq \{y\in \R^p \mid a^\T y \geq b\}$ and there exists $s\in S$ with $a^\T s = b$. A convex subset $F\subseteq S$} 
is \rev{called} a \textit{face} of $S$ if $\lambda y^1+(1-\lambda) y^2\in F$ \rev{with $y^1,y^2 \in S$ and} $0 < \lambda <1$ imply \rev{$y^1,y^2 \in F$}. A zero-dimensional face is an \emph{extreme point} (or \emph{vertex}) \rev{and} a one-dimensional face is an \emph{edge} of $S$. A recession direction $z \in \R^p \setminus \{0\}$ of convex set $S$ is said to be an \textit{extreme direction} of $S$ if $\{v+rz \in \R^p \mid r \geq 0\}$ is a face for some extreme point $v$ of $S$, see~\cite[Section 18]{rockafellar}. 

\rev{The set} $S$ is a \textit{polyhedral convex set} \revv{if} it is of the form $S =\{y\in \mathbb{R}^{p}\ | \ A^Ty\geq b\}$, where $A\in \mathbb{R}^{p\times k}$, $b\in \mathbb{R}^{k}$. \rev{This form is called a halfspace representation of $S$.} If $S$ has at least one \rev{vertex, then} it can also be represented as $S=\conv \rev{V^S} + \cone \conv \rev{D^S}$, where $V^S \subseteq\R^p$ \rev{and $D^S \subseteq\R^p$ are} the finite sets of vertices \rev{and extreme directions of $S$, respectively. For a polyhedral convex set $S$, we call two vertices \emph{adjacent} if the line segment between them is an edge of $S$.}

The problem of finding \rev{the set of all vertices and extreme directions of a polyhedral convex set $S$, given its halfspace representation} 
is called the \emph{vertex enumeration problem}. Throughout, we employ the \emph{bensolve tools} for \revvv{this purpose}~\cite{bensolve,lohneWeissing2017}. In addition to the vertices $V$ and directions $D$ \rev{of $S$}, \emph{bensolve tools} also yields the set of all adjacent vertices $V^v_{adj}\subseteq V$ of each vertex $v \in V$. 

Let $T \subseteq \R^p$ \rev{be another subset of $\R^p$}. The Minkowski sum of $S$ and $T$ \rev{is} $S+T:=\{s+t \in \R^p \mid s \in S, t \in T\}$. Moreover, for $\alpha \in \R$, we have $\alpha S:=\{\alpha s \in \R^p \mid s \in S\}$ \rev{and we} denote the set $S+(-1)\cdot T $ \rev{simply by $S-T:=\{s-t \in \R^p \mid s \in S, t \in T\}$.} 
\revv{\begin{definition}
		Let $S,T\subseteq \R^p$. The Hausdorff distance between $S$ and $T$ is defined as
		$$d_H(S,T) := \max\{\sup_{s \in S} d(s,T),\sup_{t \in T }  d(t,S) \},$$  where $d(t,S) := \inf_{s\in S}\norm{t-s}$ is the distance from point $t$ to set $S$. 
\end{definition}}
\rev{The following result provides a simple way of measuring the Hausdorff distance between a polyhedral set and a convex subset of it. The proof is omitted since it follows the same steps as the proof of \cite[Lemma 5.3]{umer2020}.
	\begin{lemma} \label{lem:Hausdorffdist}
		Let $S$ and $T$ be convex sets in $\R^p$ with $\recc S= \recc T$ and $T \subseteq S$. If $S$ is polyhedral convex with at least one vertex, then $d_H(S,T)=\max_{v \in V^S} d(v,T)$ where $V^S$ is the set of all vertices of $S$.
	\end{lemma} 
}	

Let $C \subseteq \R^p$ be a \revv{nonempty} closed convex cone. $C$ is said to be \emph{pointed} if $C \cap (-C) = \{0\}$ 
and \emph{non-trivial} if $C \neq \{0\}, C\neq \R^p$. If $C$ is pointed \revv{and 
	non-trivial, 
	then a partial order} $\leq_C$ is defined as follows: \revv{for} all $y^1,y^2\in \R^p$, $y^1\leq_C y^2$ holds if and only if $y^2-y^1\in C$.

\rev{The \emph{dual cone} of $C$ is defined as $C^+:=\{a \in \R^p \mid \revv{\forall x \in C: a^\T x\geq 0} \}$ and it is a closed convex cone. If $C\subseteq \R^p$ is a polyhedral cone, then $C^+$ is also polyhedral and can be written as $C^+ = \cone \conv\{w^1,\ldots,w^l\}$, where $w^1,\ldots,w^l\in \R^p$ are the extreme directions of $C^+$ for some $l\geq 0$. In this case, $y^1\leq_C y^2$ holds if and only if $(w^i)^\T y^1 \leq (w^i)^\T y^2$ for all $i\in \{1,\ldots, l\}$.}

\revv{Let $C \subseteq \R^p$ be a nonempty convex pointed cone.} \rev{The following definitions are based on the partial order $\leq_C$ and they are well-known concepts in convex vector optimization.
	\begin{definition} \cite[Ch. 2, Definition 2.1]{luc} \revvv{For a set $S\subseteq \R^p$, a} point $s \in S$ is a \emph{$C$-minimal element of $S$} if $\left(\{s\}-C\setminus\{0\}\right)\cap S= \emptyset$\revv{. If $C$ has nonempty interior and $(\{s\}-\Int C)\cap S = \emptyset$, then $s \in S$ is called} a \emph{weakly $C$-minimal element of $S$}. The set of all $C$-minimal (weakly $C$-minimal) elements of $S$ is denoted by $\Minc S$ ($\wMinc S$). 
	\end{definition}
	\begin{definition}	\cite[Ch. 1, Definition 6.1]{luc}
		A function $f\colon \R^n \rightarrow \R^p$ is said to be \emph{$C$-convex} if $f(\lambda x + (1-\lambda) y) \leq_C \lambda f(x) +(1-\lambda) f(y)$ for all \revv{$x,y \in \R^n, \lambda \in [0,1]$.}
\end{definition}}
	
	\section{The problem}
	\label{ch:problem}
	We consider a convex vector optimization problem given by
	\begin{align*}\label{P}
		\textrm{minimize } &{f(x)} \textrm{ with respect to } {\leq_C}  \textrm{ subject to } x \in \X, \tag{P} &
	\end{align*}
	where $C$ is a closed convex pointed cone with nonempty interior, $\X \subseteq \R^n$ is a nonempty closed convex set and $f: \R^n \rightarrow \R^p$ is a continuous $C$-convex function given by $f(x):=(f_1(x),\ldots,f_p(x))^\T$. Throughout, we assume that $C$ is polyhedral, \rev{in particular, its dual cone is given by $C^+:=\cone \conv\{w^1,\ldots,w^l\}$,} and $\X$ is compact with nonempty interior. $f(\X):= \{f(x) \in \R^p \mid x \in \X \}$ is the image of $\X$ under $f$ and the \emph{upper image} for problem $\eqref{P}$ is defined as $\mathcal{P}:= \cl(f(\X)+C)$, \rev{see Figure \ref{fig:defn}}. $\mathcal{P}$ is a closed convex set and under the assumptions of the problem, $\mathcal{P} = f(\X)+C$ \cite{umer2020}. Moreover, the set of all weakly $C$-minimal points of $\mathcal{P}$ is $\bd \mathcal{P}$ \cite{ehrgottSS2011}.
	
	\rev{Different from a single objective optimization problem, there are various optimality and solution concepts for \revv{the} problem \eqref{P}. We will now introduce the solution concepts that will be used throughout.} A feasible point $\bar{x} \in \mathcal{X}$ of \eqref{P} is called \emph{(weak) minimizer}, if $f(x)$ is (weakly) $C$-minimal element of $f(\X)$. In the context of multiobjective optimization where the ordering cone is $C=\R^p_+$, (weak) minimizers are referred to as (weak) efficient solutions. Another important concept for the multiobjective case \rev{that will be used through the rest of the paper} is the \emph{ideal point}, which is denoted by $y^I\in\R^p$ and defined as $y^I_i:=\inf\{f_i(x) \mid x \in \mathcal{X}\}$ for $i=1,\ldots,p$. 
	
	Problem \eqref{P} is said to be \emph{bounded} if $\mathcal{P} \subseteq \{y\} + C$ for some $y \in \R^p$. \rev{Since $\mathcal{X}$ is assumed to be compact, we know that} problem \eqref{P} is bounded \cite{cvop2014}. Below, we provide two solution concepts for bounded convex vector optimization problems, which are motivated from a set-optimization point of view \cite{lohne}. The first solution concept depends on a fixed direction parameter $c \in \Int C$. It is introduced in \cite{cvop2014} as \revv{a} `finite $\epsilon$-solution'. Here we add the term `with respect to $c$' to emphasize this dependence. The second one is free of a direction parameter but depends on a norm in $\R^p$. 
	
	\begin{definition}[{\cite[Definition 3.3]{cvop2014}}]
		\label{defn:weakepssol_c}
		Let $c\in \Int C$ be fixed. \revv{For $\epsilon > 0$,} a nonempty finite set $\bar{\X} \subseteq \X$ of (weak) minimizers is a \emph{finite (weak) $\epsilon$-solution with respect to $c$} if $\conv f(\mathcal{\bar{X}})+C-\epsilon \{c\} \supseteq \mathcal{P}.$
	\end{definition}
	
	\begin{definition}[\cite{Dorfler2020,umer2020}]
		\label{defn:weakepssol}
		\revv{For $\epsilon > 0$,} a nonempty finite set $\bar{\X} \subseteq \X$ of (weak) minimizers is a \emph{finite (weak) $\epsilon$-solution} if $\conv f(\mathcal{\bar{X}})+C+B(0,\epsilon) \supseteq \mathcal{P}.$
	\end{definition}
	
	\rev{Illustrations of an upper image and Definitions \ref{defn:weakepssol_c} and \ref{defn:weakepssol} can be seen in Figure \ref{fig:defn}.}

	
	\begin{figure}[!]
	\centering
	\caption{\rev{Illustrations of sets $f(\X)$, $\P$ (left) and solution concepts given by \Cref{defn:weakepssol_c} (middle) and \Cref{defn:weakepssol} (right). The dashed lines show $\bd(\conv f(\mathcal{\bar{X}})+C)$ (middle and right) and solid blue lines show $\bd(\conv f(\mathcal{\bar{X}})+C-\epsilon \{c\})$ (middle) and $\bd(\conv f(\mathcal{\bar{X}})+C+B(0,\epsilon))$ (right). }}
	\begin{minipage}[b]{0.28\linewidth}
		\centering
		\includegraphics[width=\textwidth]{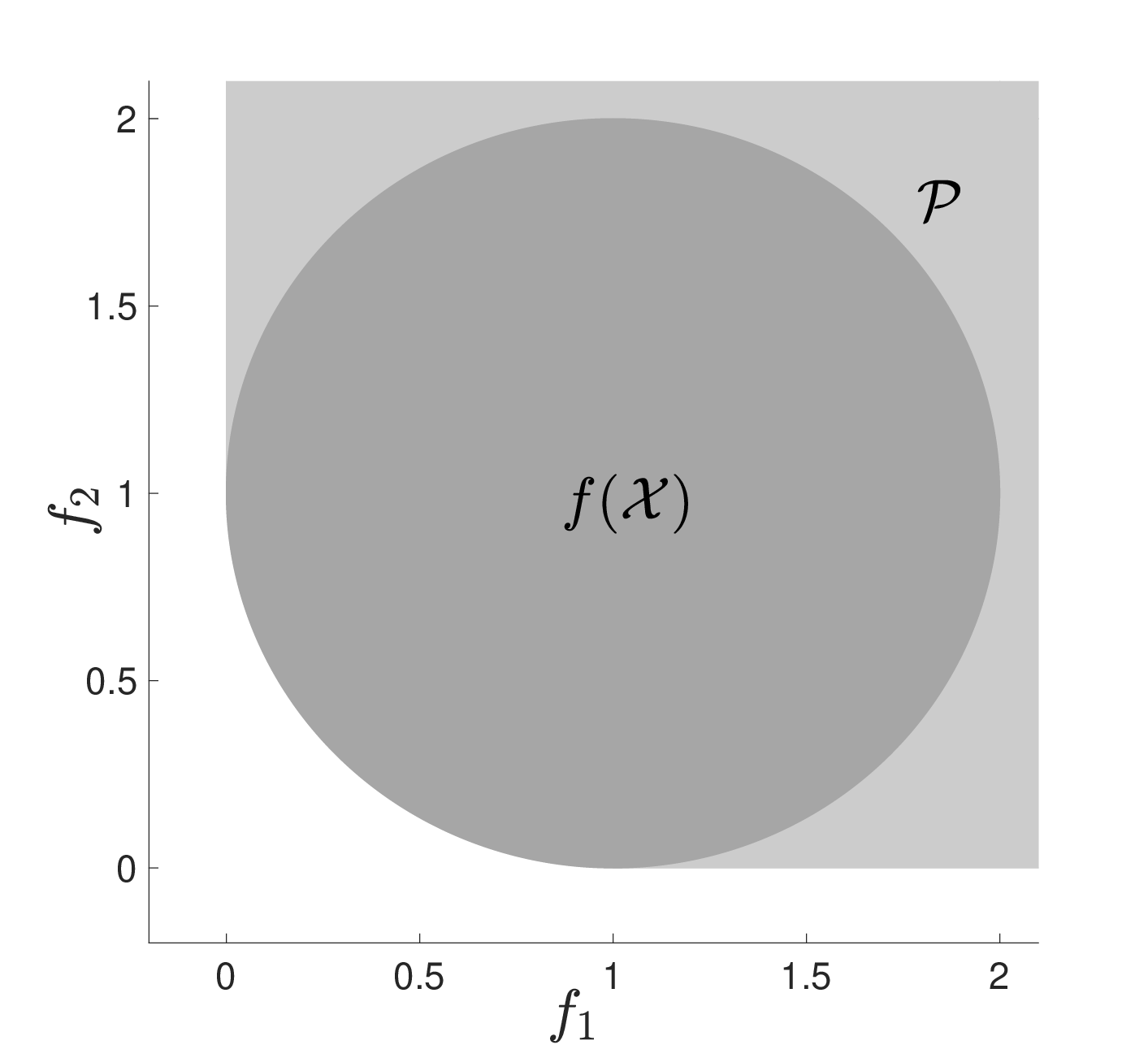}
	\end{minipage}
	\begin{minipage}[b]{0.3\linewidth}
		\centering
		\includegraphics[width=\textwidth]{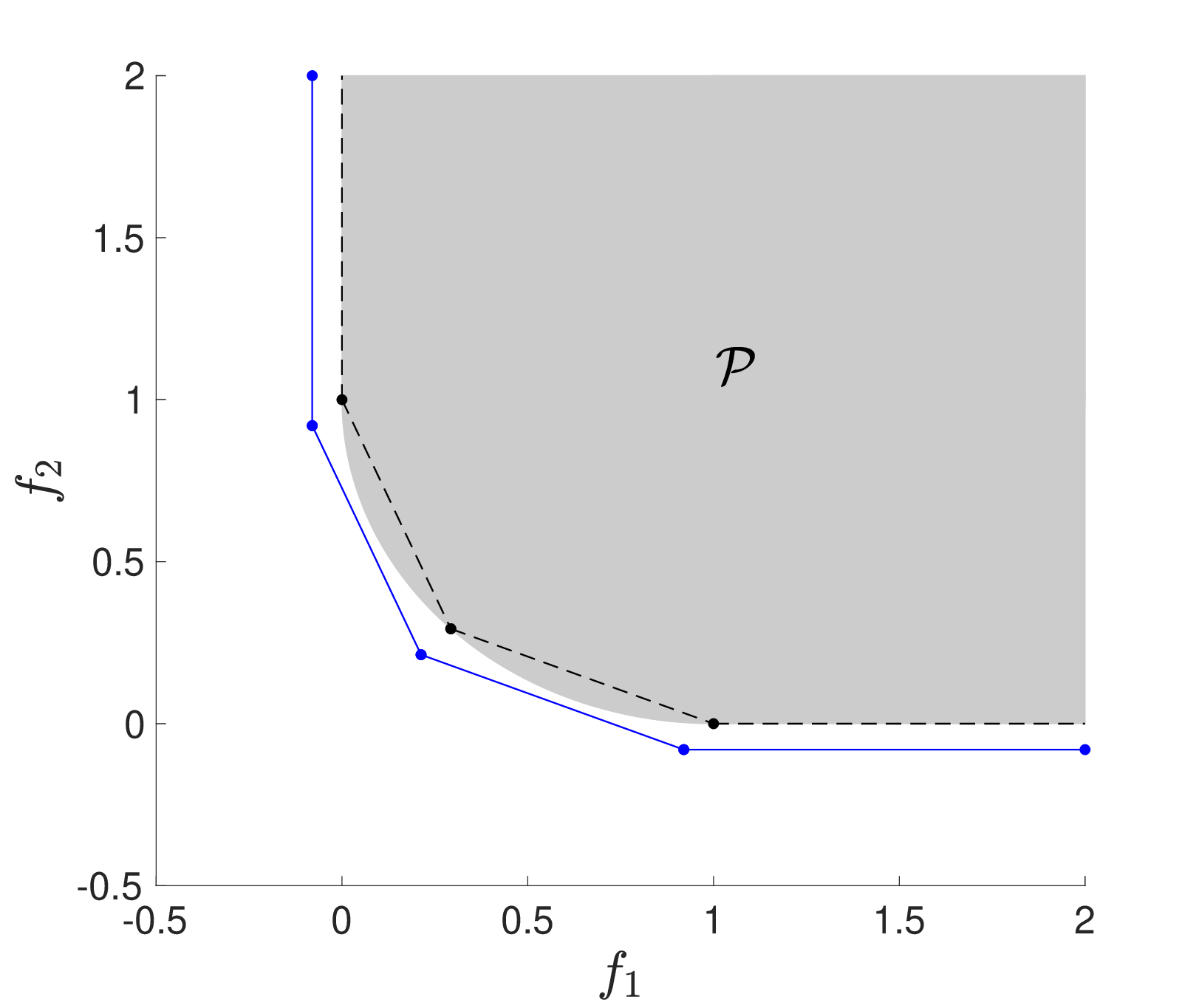}
	\end{minipage}
	\begin{minipage}[b]{0.3\linewidth}
		\centering
		\includegraphics[width=\textwidth]{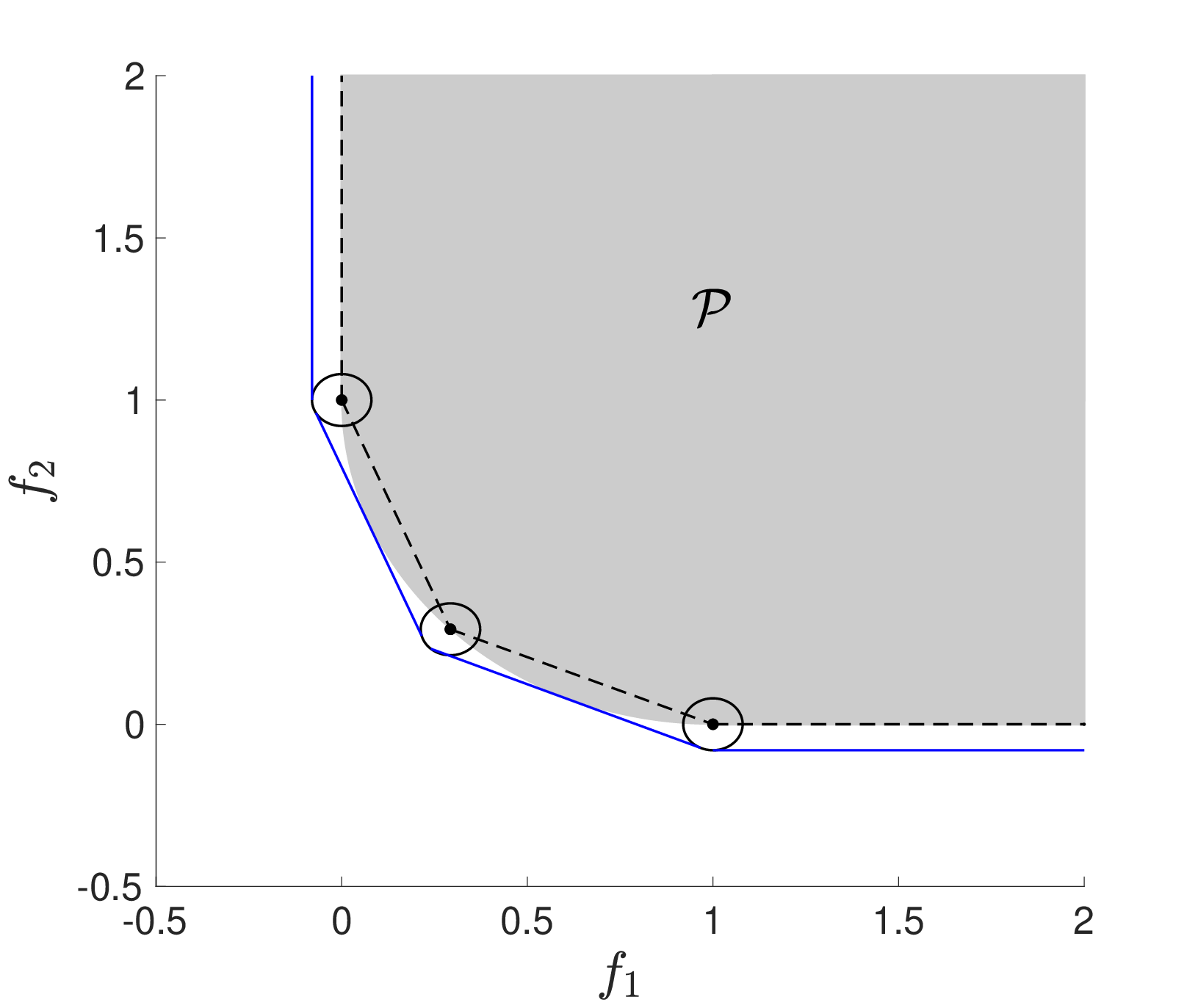}
	\end{minipage}
	\label{fig:defn}
\end{figure}
	
	Note that both of these solution concepts yield an outer approximation to the upper image. On the other hand, $\conv f(\bar{\X})+C$ is an inner approximation to $\mathcal{P}$. The Hausdorff distance between these inner and outer sets \revv{is} bounded \cite{Dorfler2020}. Moreover, as the feasible region $\X$ is compact, for any $\epsilon > 0$, there exists a finite weak $\epsilon$-solution (with respect to $c$) to problem~\eqref{P}, see~\cite[Proposition 4.3]{cvop2014} and~\cite[Proposition 3.8]{umer2020}.
	
	\rev{There are different solution approaches to solve bounded CVOPs in the sense of Definitions \ref{defn:weakepssol_c} or \ref{defn:weakepssol}. The main idea of these approaches is to generate (weak) minimizers for \eqref{P} in a structured way.} One way of generating (weak) minimizers is to solve scalarization models. Below, we provide two well-known scalarization models together with some results regarding them.
	
	For a weight parameter $w\in\R^p$, the weighted sum scalarization model is given by
	\begin{align*}  \label{WS}
		\tag{WS($w$)}
		\textrm{minimize }  {w^\T f(x)} \:\: \textrm{subject to } x \in \mathcal{X}.
	\end{align*}
	
	\begin{proposition}[{\cite[\revv{Corollary} 5.29]{jahn}}]\label{prop:ws}
		An optimal solution $x \in \X$ of \eqref{WS} is a weak minimizer of~\eqref{P} if $w \in C^+ \setminus \{0\}$. Conversely, for any weak minimizer $x\in\X$, there exists $w \in C^+ \setminus \{0\}$ such that $x$ is an optimal solution of~\eqref{WS}.
	\end{proposition}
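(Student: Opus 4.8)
The statement to prove is \Cref{prop:ws}, the classical characterization of weak minimizers via weighted sum scalarization \eqref{WS}. Since the proposition is attributed to \cite[Corollary 5.29]{jahn}, the natural approach is a direct argument exploiting the $C$-convexity of $f$ and the definition of weak $C$-minimality through a separation/linear-functional argument. The plan is to prove the two implications separately: first the ``sufficiency'' direction (an optimal solution of \eqref{WS} with $w \in C^+ \setminus \{0\}$ is a weak minimizer), and then the ``necessity'' direction (every weak minimizer solves \eqref{WS} for some such $w$).

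For the sufficiency direction, I would argue by contraposition. Suppose $x \in \X$ is optimal for \eqref{WS} with $w \in C^+ \setminus \{0\}$ but is \emph{not} a weak minimizer. Then $f(x)$ is not weakly $C$-minimal in $f(\X)$, so by definition $(\{f(x)\} - \Int C \setminus \{0\}) \cap f(\X) \neq \emptyset$; that is, there exists $x' \in \X$ with $f(x) - f(x') \in \Int C \setminus \{0\}$. The key step is then to test this with the functional $w$: since $w \in C^+$ and $f(x) - f(x') \in \Int C$, and since $w \neq 0$ pairs strictly positively with interior points of $C$ (using that $C$ is a closed convex pointed cone with nonempty interior), we obtain $w^\T(f(x) - f(x')) > 0$, i.e. $w^\T f(x') < w^\T f(x)$. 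This contradicts the optimality of $x$, completing the direction.

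For the necessity direction, let $x \in \X$ be a weak minimizer, so $(\{f(x)\} - \Int C \setminus \{0\}) \cap f(\X) = \emptyset$. Equivalently, the open convex set $f(x) - \Int C$ does not meet $f(\X) + C = \mathcal{P}$ (I would verify that weak $C$-minimality of $f(x)$ in $f(\X)$ is equivalent to $f(x) \in \bd \mathcal{P}$ and that $(f(x) - \Int C) \cap \mathcal{P} = \emptyset$, using $C$-convexity of $f$ to ensure $\mathcal{P}$ is convex). I would then invoke a separating hyperplane theorem to obtain a nonzero $w \in \R^p$ and a scalar separating $f(x) - \Int C$ from $\mathcal{P}$. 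The structure of the two sets forces $w$ to be supporting at $f(x)$, and a standard cone argument (since both $-\Int C$ and $C$ appear as recession-type directions) pins down $w \in C^+ \setminus \{0\}$ and shows $w^\T f(x) \le w^\T f(x')$ for all $x' \in \X$, i.e. $x$ is optimal for \eqref{WS}.

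The main obstacle is the necessity direction, specifically extracting the \emph{sign/membership} conclusion $w \in C^+ \setminus \{0\}$ from the raw output of the separation theorem. Separation only yields a nonzero normal vector; one must use the conic structure on both sides — that $C$ is a recession direction of $\mathcal{P}$ and $\Int C$ is a recession direction of the half-space containing $f(x) - \Int C$ — to force $w^\T c \ge 0$ for all $c \in C$ (hence $w \in C^+$) and to rule out degenerate normals. The nonempty-interior and pointedness hypotheses on $C$, together with the compactness of $\X$ guaranteeing $\mathcal{P} = f(\X) + C$ is closed, are exactly what make this pinning-down step go through cleanly.
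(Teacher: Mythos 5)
Your proposal is correct: the contraposition argument for sufficiency (using that $w\in C^+\setminus\{0\}$ pairs strictly positively with $\Int C$) and the separation argument for necessity (separating the open convex set $f(x)-\Int C$ from the convex set $\mathcal{P}=f(\X)+C$, then using $C\subseteq \recc \mathcal{P}$ to conclude $w\in C^+$ and the supporting inequality at $f(x)$ to conclude optimality for \eqref{WS}) are both sound and complete in outline. The paper gives no proof of its own here — it simply cites \cite[Corollary 5.29]{jahn} — and your argument is precisely the standard one behind that reference, so there is nothing to compare beyond noting that your write-up fills in a proof the authors chose to import.
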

	
	
	
	Pascoletti-Serafini scalarization \cite{pascolettiSerafini1984} is given by
	\begin{align*} \label{PS}
		\tag{PS($v,d$)}
		\textrm{minimize } z \:\: \textrm{subject to } f(x) \leq_C v + zd, \: x \in \X, \: z \in \R.
	\end{align*}
	The parameters $v, d \in \R^p$ are referred to as the reference point and the direction, respectively. \rev{The Lagrange dual of the PS problem can be written as 
		\begin{align*} \label{PS-dual}
		\tag{D-PS($v,d$)}
		\textrm{maximize } \inf_{x \in \mathcal{X}} w^\T f(x) - w^\T v \:\: \textrm{subject to } w^\T d = 1, \: w \in C^+,
	\end{align*}
see \cite{Dorfler2020,cvop2014}.} Below, we provide some well-known results regarding \eqref{PS} \rev{and \eqref{PS-dual}}.
	
	\begin{proposition} \cite[Proposition 4.5]{cvop2014}   \label{prop:PS}
		If $(x^*,z^*) \in \R^{n+1}$ is an optimal solution of problem \eqref{PS}, then $ x^*$ is a weak minimizer. Moreover, $v + z^* d \in \bd \mathcal{P}$. 
	\end{proposition}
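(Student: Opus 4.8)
The plan is to prove the two assertions in order: first I would locate the point $v + z^* d$ on the boundary of $\mathcal{P}$, and then transfer this boundary/minimality information back to the solution $x^*$. I would begin by extracting what feasibility and optimality give. Feasibility of $(x^*,z^*)$ means $f(x^*) \leq_C v + z^* d$, i.e. $v + z^* d - f(x^*) \in C$, so $v + z^* d \in f(x^*) + C \subseteq f(\X) + C = \mathcal{P}$. Hence $v + z^* d \in \mathcal{P}$, and the only thing left for the second assertion is to exclude $v + z^* d \in \Int \mathcal{P}$.

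For the boundary claim I would argue by contradiction. Suppose $v + z^* d \in \Int \mathcal{P}$. Since $\Int \mathcal{P}$ is open and $d \neq 0$, for every sufficiently small $\epsilon > 0$ the perturbed point $v + (z^* - \epsilon) d = (v + z^* d) - \epsilon d$ still lies in $\mathcal{P} = f(\X) + C$ (concretely, choose $\epsilon < \delta/\|d\|$ for a ball $B(v+z^*d,\delta) \subseteq \mathcal{P}$). Then there is $x' \in \X$ with $f(x') \leq_C v + (z^* - \epsilon) d$, so $(x', z^* - \epsilon)$ is feasible for \eqref{PS} with objective value $z^* - \epsilon < z^*$, contradicting optimality. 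Therefore $v + z^* d \in \bd \mathcal{P}$.

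For the weak minimizer claim I would invoke the fact recorded in \Cref{ch:problem} that $\bd \mathcal{P} = \wMinc \mathcal{P}$, so $v + z^* d$ is weakly $C$-minimal in $\mathcal{P}$. Assume, for contradiction, that $x^*$ is not a weak minimizer; then some $x' \in \X$ satisfies $f(x^*) - f(x') \in \Int C$. I would then write $v + z^* d - f(x') = (v + z^* d - f(x^*)) + (f(x^*) - f(x'))$ as a sum of an element of $C$ and an element of $\Int C$, and use $C + \Int C \subseteq \Int C$ to conclude $v + z^* d - f(x') \in \Int C \setminus \{0\}$. Since $f(x') \in f(\X) \subseteq \mathcal{P}$, this puts $f(x')$ in $(\{v + z^* d\} - \Int C \setminus \{0\}) \cap \mathcal{P}$, contradicting weak $C$-minimality of $v + z^* d$. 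Hence $f(x^*)$ is weakly $C$-minimal in $f(\X)$, i.e. $x^*$ is a weak minimizer.

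The part requiring the most care, rather than a deep obstacle, is the pair of reduction arguments: both rest on being able to strictly decrease $z^*$, which relies on openness (of $\Int \mathcal{P}$ in the first claim and of $\Int C$ in the second) together with $d \neq 0$. I would make sure the perturbation estimates are quantified correctly and would flag that the statement tacitly assumes $d \neq 0$, since the degenerate direction $d = 0$ would invalidate both reductions.
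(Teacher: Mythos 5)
Your proof is correct. Note that the paper does not supply its own proof of this proposition --- it is quoted from \cite[Proposition 4.5]{cvop2014} --- but your argument is essentially the standard one for that result: feasibility places $v+z^*d$ in $\P=f(\X)+C$ (this is exactly \Cref{prop:inc}), optimality rules out $v+z^*d\in\Int\P$ via the perturbation $z^*-\epsilon$, and weak minimality of $x^*$ follows. Your one small deviation is to route the weak-minimizer claim through the identity $\bd\P=\wMinc\P$; the more self-contained alternative is a second perturbation (if $f(x^*)-f(x')\in\Int C$, then $f(x^*)-f(x')-\epsilon d\in C$ for small $\epsilon>0$, so $(x',z^*-\epsilon)$ is feasible), but both are valid given that the paper records $\bd\P=\wMinc\P$ in \Cref{ch:problem}. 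Your caveat about $d=0$ is not a real gap: for $d=0$ the variable $z$ is absent from the constraint, so \eqref{PS} admits no optimal solution and the proposition holds vacuously.
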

	
	\begin{remark}\label{prop:inc}
		Note that $f(x) \leq_C v+zd$ holds for some $x \in \X$ if and only if $v+zd \in \mathcal{P}$. 	To see, assume $f(x) \leq_C v+zd$ holds for some $x \in \mathcal{X}$, that is, $v+zd-f(x) \in C$ holds. Then, we have $v+zd \in \{f(x)\}+C \subseteq f(\mathcal{X})+C = \mathcal{P}$. The other implication follows similarly.
	\end{remark}
	The following propositions show the existence of an optimal solution to problem \eqref{PS} under the assumptions of problem \eqref{P} together with some additional conditions on the problem parameters. Note that \Cref{prop:PS2} is already stated in \cite[Proposition 4.4]{cvop2014}. We still provide a proof of it as the one given in \cite{cvop2014} has inaccuracies.\revv{\footnote{\revv{In the proof of \cite[Proposition 4.4]{cvop2014}, the feasible region of \eqref{PS} is stated to be compact.}}}
	\begin{proposition} \label{prop:PS2}
		Let $v \in \R^p$. If $d \in \Int C$, then there \rev{exist optimal solutions to~\eqref{PS} and \eqref{PS-dual}. Moreover, the optimal values of the two problems coincide.} 
	\end{proposition}	
	\begin{proof}
		First, we show that there exists a feasible solution to \eqref{PS}. \rev{By the assumptions on \eqref{P}, there exists  $\bar{x}\in \Int\X$. Let $\bar{z}:= \max_{i\in\{1,\ldots,l\}} \frac{(w^i)^\T(f(\bar{x})-v)}{(w^i)^\T d} + \delta$ for some $\delta >0$, where $C^+ = \cone \conv\{w^1,\ldots,w^l\}$. Note that $\bar{z}$ is well defined since $d \in \Int C$, hence $(w^i)^\T d > 0$ for all $i\in\{1,\ldots,l\}$. It is not difficult to see that $(\bar{x},\bar{z})$ is a feasible solution for \eqref{PS}. Indeed, $(\bar{x},\bar{z})$ is strictly feasible, hence a Slater point for \eqref{PS}. \revv{Hence,} there exists an optimal solution $w^*$ for \eqref{PS-dual} and the optimal values of the two problems coincide.}
		
		
		\rev{For the existence of an optimal solution for \eqref{PS}, we first} show that for any feasible solution $(x,z) \in \R^{n+1}$, $z$ is bounded below. As $\mathcal{X}$ is compact, $\eqref{P}$ is bounded, \rev{that is, t}here exists $a \in \R^p$ such that $\mathcal{P} \subseteq \{a\}+ C$. Then, by \Cref{prop:inc}, $v+zd \in \{a\}+ C$ holds for any feasible $(x,z)$. This implies that for all $w \in C^+$ we have $w^\T(v+zd-a) \geq 0$. Since $w^\T d> 0$ \rev{for all $w \in C^+\setminus\{0\}$}, it is true that $$ z \geq \sup_{w \in C^+\rev{\setminus \{0\}}}\frac{w^\T (a-v)}{w^\T d} = \rev{\sup \left\{\frac{w^\T (a-v)}{w^\T d} \mid w \in C^+, \norm{w} =1 \right\} } =:\tilde{z} \in \R.$$ Then, \rev{without changing the problem, we may} add the constraint: $\tilde{z} \leq z \leq \bar{z}$ to \eqref{PS}. 
		As the feasible region of the equivalent form is compact, \eqref{PS} has an optimal solution.
	\end{proof}


\begin{proposition}\cite[Proposition 3.7]{Dorfler2020}\label{prop:PS3}
	For $v \notin \mathcal{P}, y \in \rev{\Int}\mathcal{P}$ and $d=y-v$, \rev{both \eqref{PS} and \eqref{PS-dual} have optimal solutions and the optimal values coincide}.
\end{proposition}

	\rev{The next result states} that using the primal-dual solution pair to problem\rev{s \eqref{PS} and \eqref{PS-dual}}, it is possible to find a supporting hyperplane to the upper image. \rev{It will play a critical role in the description of the solution algorithm which will be discussed in Section \ref{ch:algorithm_variants}.}
	
	\begin{proposition}[{\cite[Proposition 4.7]{cvop2014}}]\label{prop:supphyp} Let $v, \rev{d} \in \R^p$ and $(x^*,z^*)$, $w^*\in \R^p$ be the optimal solutions for \eqref{PS} and its Lagrange dual, respectively. \rev{If the optimal objective \revv{function} values of \eqref{PS} and \eqref{PS-dual} coincides,} then $H:=\{y \in \R^p \mid (w^*)^\top y=(w^*)^\T v+z^*\}$ is a supporting hyperplane for $\P$ at $y^*=v+z^*d$ and $\mathcal{H}=\{y \in \R^p \mid (w^*)^\T y \geq (w^*)^\T v+z^*\}$ contains $\P$.
	\end{proposition}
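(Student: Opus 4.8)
The plan is to pass to the Lagrange dual of \eqref{PS}, extract from strong duality the identity that $z^*+(w^*)^\T v$ equals the optimal value of a weighted-sum problem, and then obtain both assertions by short computations. First I would form the Lagrangian. Writing the constraint $f(x)\leq_C v+zd$ as $f(x)-v-zd\in -C$ and attaching a multiplier $w\in C^+$ gives
\[ L(x,z,w)=z+w^\T\of{f(x)-v-zd}=z\of{1-w^\T d}+w^\T f(x)-w^\T v. \]
Minimizing over $z\in\R$ forces $w^\T d=1$ (otherwise the infimum is $-\infty$), and under this condition the dual objective reduces to $\phi(w)=\inf_{x\in\X}w^\T f(x)-w^\T v$. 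Hence the Lagrange dual is $\max\cbk{\phi(w)\mid w\in C^+,\ w^\T d=1}$, and the dual optimizer $w^*$ satisfies $w^*\in C^+$ and $(w^*)^\T d=1$; in particular $w^*\neq 0$, so $H$ and $\mathcal{H}$ are a genuine hyperplane and half-space.

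The key step is strong duality. Since \eqref{PS} is a convex program, once a constraint qualification is in force there is no duality gap, so the given primal and dual optima satisfy
\[ z^*=\phi(w^*)=\inf_{x\in\X}(w^*)^\T f(x)-(w^*)^\T v, \qquad\text{i.e.}\qquad \inf_{x\in\X}(w^*)^\T f(x)=(w^*)^\T v+z^*. \]
With this identity in hand, the containment $\mathcal{P}\subseteq\mathcal{H}$ is immediate: I would take any $y\in\mathcal{P}=f(\X)+C$, write $y=f(x)+c$ with $x\in\X$ and $c\in C$, and use $(w^*)^\T c\geq 0$ (as $w^*\in C^+$) to get $(w^*)^\T y\geq (w^*)^\T f(x)\geq\inf_{x'\in\X}(w^*)^\T f(x')=(w^*)^\T v+z^*$, which says precisely $y\in\mathcal{H}$.

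For the support statement, the point $y^*=v+z^*d$ satisfies $(w^*)^\T y^*=(w^*)^\T v+z^*(w^*)^\T d=(w^*)^\T v+z^*$ by $(w^*)^\T d=1$, so $y^*\in H$; moreover \Cref{prop:PS} gives $y^*\in\bd\mathcal{P}\subseteq\mathcal{P}$. Thus $y^*\in H\cap\mathcal{P}$ while $\mathcal{P}\subseteq\mathcal{H}$, which is exactly the statement that $H$ supports $\mathcal{P}$ at $y^*$. The only genuinely delicate point is the zero duality gap: one must verify a constraint qualification for \eqref{PS}, which holds for instance when $d\in\Int C$, since then $v+zd-f(x)\in\Int C$ for $z$ large enough, furnishing a strictly feasible point (Slater's condition). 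Everything after the strong-duality identity is a two-line computation.
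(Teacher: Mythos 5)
Your argument is correct: the paper itself states this result without proof, citing \cite[Proposition~4.7]{cvop2014}, and your Lagrange-duality derivation (dual feasibility forcing $w^*\in C^+$ with $(w^*)^\T d=1$, strong duality giving $\inf_{x\in\X}(w^*)^\T f(x)=(w^*)^\T v+z^*$, then the two short computations) is essentially the proof given in that reference. Your remark that the zero duality gap needs a constraint qualification is the right caveat; in the algorithm's setting $d\in\Int C$ is ensured, so Slater's condition holds as you describe.
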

		
	\rev{\Cref{fig:PS}  illustrates problem \eqref{PS} and Proposition \ref{prop:supphyp}.} 
	
	\begin{figure}[!]
	\centering
	\caption{\rev{Illustration of \eqref{PS} and \Cref{prop:supphyp}. The blue marker shows the point $v + z^\ast d$ and the dashed red line shows the supporting hyperplane $\mathcal{H}$ of $\P$ at $v + z^\ast d$. } }
	\includegraphics[width=0.4\textwidth]{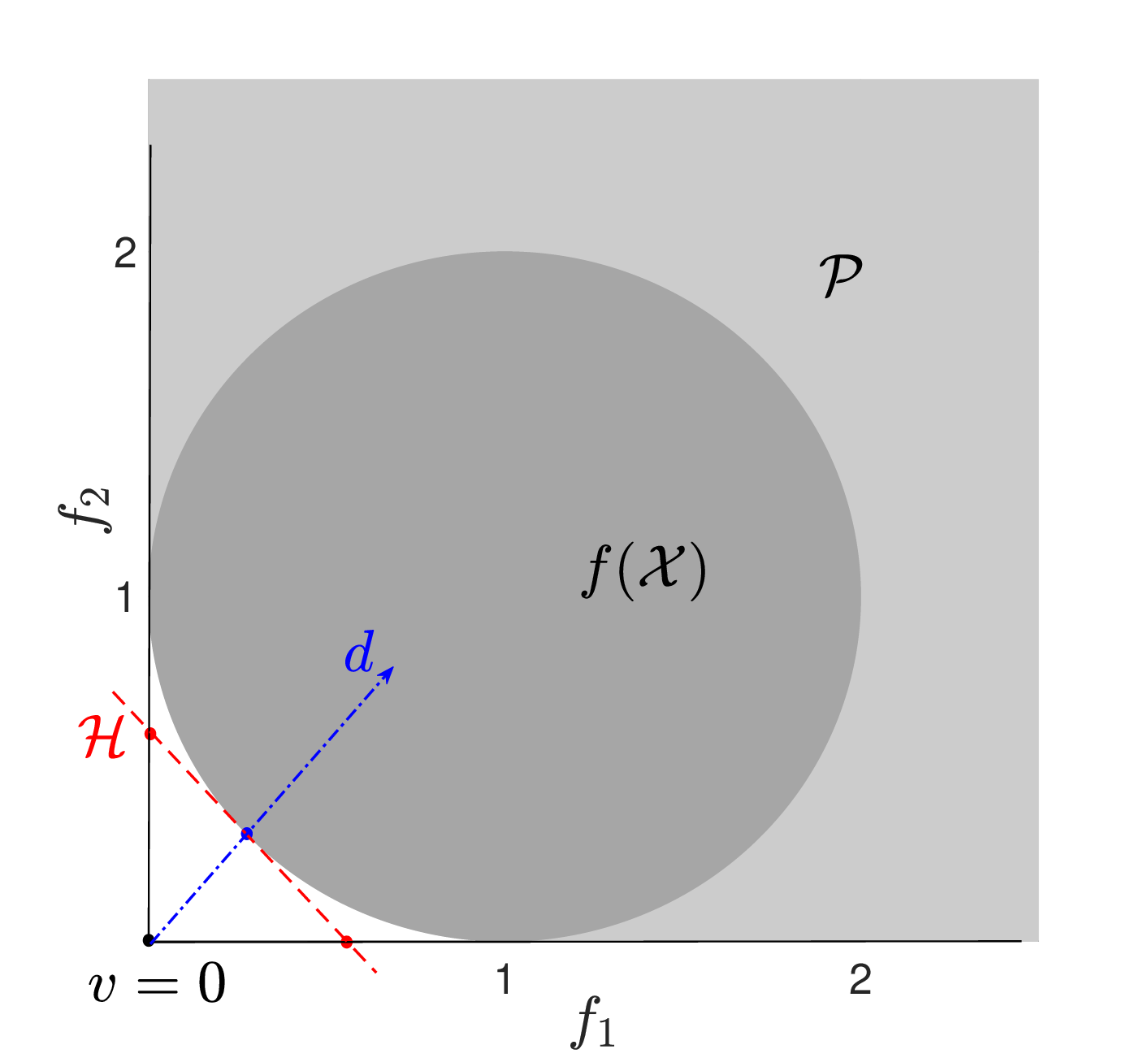}
	\label{fig:PS}
\end{figure}
	
	\section{The algorithm and variants}\label{ch:algorithm_variants}
	
	\rev{In this section,} we will first provide the main framework of a CVOP algorithm \revv{that} solves PS scalarizations iteratively and finds a finite weak $\epsilon$-solution to problem \eqref{P} for a given $\epsilon>0$. Then, we will provide some variants \revvv{based on selecting the parameters of the scalarizations}.
	
	\subsection{The algorithm}
	\rev{We consider a CVOP} algorithm that starts with an outer approximation to the upper image, iteratively updates it by solving PS scalarizations\revv{,} and stops when the approximation is fine enough. More specifically, it starts by solving (WS($w^i$)) for all $i\in\{1,\ldots,l\}$ where $w^i$'s are \rev{extreme directions} of $C^+$. Optimal solutions $x^i, i \in \{1,\ldots, l\}$ of these problems form the initial set of weak minimizers $\bar{\X}^0$. Then, the initial outer approximation $P^0$ of $\mathcal{P}$ is set as $P^0=\bigcap_{i \in \{1,\ldots,l\}}\mathcal{H}^i$ (lines 2, 3 of \Cref{alg_1}), where $\mathcal{H}^i=\{y \in \rev{\R^p} \mid {(w^i)}^{\T}y \geq {(w^i)}^{\T} f(x^i)\}$. 
	
	In \revv{the} $k^{th}$ iteration, the vertices $V^k$ of the current outer approximation $P^k$ \revv{are} considered and a vertex \revv{that was} not used in the previous iterations, \rev{i.e., $v \in V^k\setminus \Vused$} is selected. We will later discuss different vertex selection methods, some of which return $\Vinfo \neq \emptyset$. $\Vinfo$ stores triples $(v,y^v,z^v)$ for the vertices $v \in V^k \setminus \Vused$, where $y^v \in \bd\mathcal{P}$ and $z^v=\norm{y^v-v}$. In this case, an upper bound for the Hausdorff distance between $P^k$ and $\P$, namely $\hat{h}=\max_{v \in V^k \setminus \Vused}z^v$ can be computed. If $\hat{h} \leq \epsilon$, the algorithm terminates by letting $V^k= \emptyset$. $\Vinfo= \emptyset$ means that the vertex selection method does not store any information \rev{regarding} the current vertices, see lines 6-12.
	
	If $\Vinfo = \emptyset$ or if the algorithm is not terminated as explained above, then~\eqref{PS} is solved to find a weak minimizer $x^v$, see \Cref{prop:PS}. Note that if the direction parameter $d$ is not fixed from the beginning of the algorithm, then it has to be computed \rev{before this step}. 
	The selected vertex $v$ is added to $\Vused$ and the corresponding weak minimizer $x^v$ is added to set $\bar{\X}^k$ (lines 13-16). If the current vertex is close enough to the upper image, then the algorithm checks another vertex from $V^k \setminus \Vused$. Otherwise, the current outer approximation is updated by intersecting it with the supporting halfspace $\mathcal{H}$ of $\P$ at $f(x^v)$, see \Cref{prop:supphyp}. The vertices of the updated outer approximation \revv{are} computed by solving a vertex enumeration problem (lines 18-20). The algorithm terminates if all the vertices of the current outer approximation are in $\epsilon$ distance to the upper image and returns a set of weak minimizers $\bar{\X}$. 
	\begin{algorithm}[h]
		\caption{Primal Approximation Algorithm for (P)}
		\begin{algorithmic}[1] \label{alg_1}	
			\STATE $\Vused=\emptyset$, $k=0$, $solve=1$;
			\STATE For $i=1,\ldots,l$: Solve (WS($w^i$)) to find an optimal solution $x^i$;
			\STATE Set $\bar{\X}^0 = \{x^1,\ldots,x^l\}$ and $P^0 = \bigcap_{i \in \{1,\ldots, l\}} \{y \in \rev{\R^p} \mid (w^i)^{\T}y\geq (w^i)^{\T} f(x^i)\}$;
			\STATE Compute the set of vertices $V^0$ of $P^0$; 
			\STATE Fix rules for selecting a vertex from $V^k$ and a direction parameter $d$;
			\WHILE {$V^k \setminus \Vused \neq \emptyset$}
			\STATE  $[v, \Vinfo]\gets $SelectVertex($V^k, \Vused$); 
			\IF {$\Vinfo \neq \emptyset $}
			\IF {$\hat{h}=\max_{v \in V^k \setminus \Vused} z^v \leq \epsilon$}
			\STATE $V^k=\emptyset$, $solve=0$;
			\ENDIF
			\ENDIF
			\IF {$\Vinfo= \emptyset$ or $solve=1$}
			\STATE (If not fixed) compute $d^v$ such that $d^v \in \Int C$ and $\norm{d^v} = 1$;
			\STATE Solve (PS($v,d^v$))\rev{/(D-PS($v,d^v$))}. Let $(x^v,z^v)$ and \rev{$w^v$} be optimal solutions;
			\STATE $\Vused \gets \Vused \cup \{v\}$, $\bar{\X}^{k} \gets \bar{\X}^{k} \cup \{x^v\}$;
			\IF {$z^v > \epsilon$}
			\STATE \rev{Compute $\mathcal{H}=\{y\in \R^p \mid (w^v)^\T y \geq (w^v)^\T v + z^v\}$};
			\STATE $P^{k+1}\gets P^{k} \cap \mathcal{H}$, $\bar{\X}^{k+1} \gets \bar{\X}^{k}$;
			\STATE Compute the set of vertices $V^{k+1}$ of $P^{k+1}$; 
			\STATE $k \gets k+1$;
			\ENDIF
			\ENDIF
			\ENDWHILE
			\RETURN $\bar{\X}^k$
		\end{algorithmic}
	\end{algorithm} 
	\begin{remark}\label{rem:cardinality}
		\rev{In Algorithm \ref{alg_1},} to obtain a coarser set of solutions, instead of adding all weak minimizers to the solution set, one can add the ones which satisfy $z^v \leq \epsilon$. 
	\end{remark}
	
	Later, we will discuss different rules for selecting the direction parameter and the vertices, respectively in Sections \ref{sect:var_direction} and \ref{sect:var_vert}. The following proposition holds for any selection rule. 
	
	\begin{proposition} \label{prop:alg}
		When terminates, \Cref{alg_1} returns a finite weak $\epsilon$-solution.
	\end{proposition}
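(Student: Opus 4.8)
The plan is to verify the three requirements of \Cref{defn:weakepssol} in turn: that the returned set $\bar{\X}^k$ is a nonempty finite set of weak minimizers, and that $\conv f(\bar{\X}^k)+C+B(0,\epsilon)\supseteq\mathcal{P}$. The first two are immediate: the set is nonempty because the initializing solutions $x^1,\dots,x^l$ of (WS($z^i$)) are inserted in line~3, it is finite because termination occurs after finitely many iterations each adding a single point, and every element is a weak minimizer by \Cref{prop:ws} (for the WS solutions, since $z^i\in C^+\setminus\{0\}$) and \Cref{prop:PS} (for each PS solution $x^v$). Alongside this I would record two invariants maintained throughout. First, $\mathcal{P}\subseteq P^k$ for every $k$, since $P^0$ is an intersection of halfspaces each containing $\mathcal{P}$ (by optimality of the $x^i$ and $z^i\in C^+$) and every update intersects with a supporting halfspace $\mathcal{H}\supseteq\mathcal{P}$ (\Cref{prop:supphyp}). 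Second, $\recc P^k=C=\recc(\conv f(\bar{\X}^k)+C)=\recc\mathcal{P}$, because the generators of $C^+$ appear among the halfspace normals of $P^0$ and every later normal $w^*$ lies in $C^+$.

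Writing $I:=\conv f(\bar{\X}^k)+C$ for the inner approximation at termination and $P$ for the final outer approximation with vertex set $V^P$, I would reduce the containment to a single estimate on the vertices of $P$. Since $I\subseteq\mathcal{P}\subseteq P$ with $P$ polyhedral and $\recc P=\recc I$, the Hausdorff formula quoted in \Cref{ch:Prelim} gives $d_H(P,I)=\max_{v\in V^{P}}d(v,I)$; as $I\subseteq P$ this equals $\sup_{s\in P}d(s,I)$, so a bound $\max_{v\in V^P}d(v,I)\le\epsilon$ yields $P\subseteq I+B(0,\epsilon)$ and hence $\mathcal{P}\subseteq P\subseteq \conv f(\bar{\X}^k)+C+B(0,\epsilon)$, which is exactly the desired inclusion. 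Thus everything comes down to showing $d(v,I)\le\epsilon$ for every vertex $v$ of $P$.

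For this I would split the vertices by how the loop disposed of them. If $v$ was processed by a PS solve, a surviving vertex must have had optimal value $z^v\le\epsilon$: if $z^v>\epsilon>0$ the supporting halfspace added in line~18 strictly excludes $v$ (one checks $v\notin\mathcal{H}$ precisely when $z^v>0$, using $(w^*)^\T d^v=1$), so $v$ could not remain a vertex of $P$. Moreover $z^v\ge0$, because a vertex of an outer approximation cannot lie in $\Int\mathcal{P}$, and the point $y^v=v+z^v d^v$ obeys $f(x^v)\leq_C y^v$, whence $y^v\in\{f(x^v)\}+C\subseteq I$; since $\norm{d^v}=1$ this gives $d(v,I)\le\norm{v-y^v}=z^v\le\epsilon$. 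If instead termination occurred through the test $\hat{h}=\max_{v\in V^k\setminus\Vused}z^v\le\epsilon$ in line~9, the remaining unused vertices carry a stored pair $(y^v,z^v)$ with $z^v=\norm{y^v-v}\le\epsilon$, and the same bound $d(v,I)\le\epsilon$ follows provided $y^v\in I$.

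The main obstacle is precisely this last point. \Cref{defn:weakepssol} measures distance to the inner set $\conv f(\bar{\X})+C$, whereas the bookkeeping in $\Vinfo$ only advertises $y^v\in\mathcal{P}$; since $I$ is a strict subset of $\mathcal{P}$ in general, $d(v,\mathcal{P})\le\epsilon$ is weaker than what I need. I would therefore have to verify, for each concrete vertex-selection rule returning $\Vinfo\neq\emptyset$ in \Cref{sect:var_vert}, that the stored $y^v$ is in fact drawn from the current inner approximation $\conv f(\bar{\X}^k)+C$ (equivalently, that it is $C$-dominated by the image of some recorded minimizer), so that $y^v\in I$ and the vertex estimate closes uniformly across both termination modes. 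Everything else is routine once the two invariants and this membership are in place.
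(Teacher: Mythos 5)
Your argument is essentially the paper's argument. The reduction via the Hausdorff formula $d_H(P,I)=\max_{v\in V^P}d(v,I)$ is just a repackaging of what the paper does explicitly: it writes $P^K=\conv V^K+C$, expands an arbitrary element $\sum_v\lambda^v v+\bar c$ using $v+z^vd^v=f(x^v)+c^v$, and collects the terms into $\conv f(\bar\X^K)+C+B(0,\epsilon)$ using $z^v\le\epsilon$ and $\norm{d^v}=1$ --- which is exactly your vertex estimate $d(v,I)\le z^v$ plus the convexity step hidden inside the quoted Hausdorff identity. Your verification of the two invariants ($\P\subseteq P^k$ and $\recc P^k=C=\recc I$) matches the paper's appeal to \cite[Lemma 5.2]{umer2020}, and your observations that a surviving used vertex must have $z^v\le\epsilon$ (else the cut of line 18 removes it) and that $z^v\ge 0$ (a vertex of an outer approximation cannot lie in $\Int\P$) are correct; the second point is implicit but unstated in the paper.

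The one step you leave open --- that for the $\Vinfo\neq\emptyset$ termination branch the stored point $y^v$ of an \emph{unused} vertex must lie in the inner set $\conv f(\bar\X^K)+C$ rather than merely in $\P$ --- is precisely the step the paper compresses into ``the similar steps for the previous case can be applied.'' Your concern is substantive, not cosmetic: for a rule such as AUU the stored $y^v$ is the metric projection of $v$ onto $\P$, so $z^v\le\epsilon$ only certifies $d(v,\P)\le\epsilon$, which gives the vacuous inclusion $\P\subseteq\P+B(0,\epsilon)$ and not the inclusion demanded by \Cref{defn:weakepssol}; closing the argument genuinely requires the rule-by-rule check you describe (e.g.\ for DLSW the $y^v$ are taken from $\conv f(\bar\X^k)+C$ by construction, and for UB one must argue that the modified local upper bounds are dominated by images of recorded minimizers). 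So you have not missed anything the paper supplies; you have isolated the point at which its proof is terse. To submit this as a complete proof you would need to either carry out that verification for each rule with $\Vinfo\neq\emptyset$, or restrict the termination test of line 9 to rules whose stored $y^v$ provably lie in the current inner approximation.
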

	
	\begin{proof}
		There exists optimal solutions \rev{$x^i$} to (WS($w^i$)) for all $i \in \{1, \ldots, l\}$ as $\X$ is compact and $f$ is continuous. \rev{Moreover, $\mathcal{H}^i=\{y\in\R^p\mid (w^i)^\mathsf{T}y \geq (w^i)^\mathsf{T}f(x^i)\} \supseteq \P = f(\X) + C$ holds since $x^i$ is an optimal solution for \revv{(WS($w^i$))} and $\inf_{c\in C}(w^i)^\T c = 0$. Then, $P^0\supseteq \P$. Moreover,} $P^0$ has at least one vertex as $C$ is pointed and \eqref{P} is bounded, see \revv{\cite[Corollary 18.5.3]{rockafellar}}. \rev{Hence, we have $V^0 \neq \emptyset$ (consequently $V^0 \setminus \Vused \neq \emptyset$). By \Cref{prop:ws}, the initial solution set $\bar{\X}^0=\{x^1,\ldots,x^l\}$ consists of weak minimizers. See lines 2-4 of \Cref{alg_1}}\revv{.} 
		
		At iteration $k$, SelectVertex() returns a vertex $v \in V^k \setminus \Vused$ and $\Vinfo$. First, consider the case $\Vinfo=\emptyset$. As $d^v \in \Int C$ is ensured \rev{(line 14)}, by \Cref{prop:PS2}, there exists solutions \rev{$(x^v,z^v)$ and $w^v$} to (PS($v,d^v$)) \rev{and (D-PS$(v,d^v)$), respectively}. By \Cref{prop:PS}, $x^v$ is a weak minimizer. Hence, for any iteration $k$, $\bar{\X}^k$ is finite and contains only the weak minimizers \rev{(line 16)}. By \Cref{prop:supphyp}, \rev{$\mathcal{H}=\{y\in \R^p \mid (w^v)^\T y \geq (w^v)^\T v + z^v\} \supseteq \P$ (line 18). Since $P^0 \supseteq \P$ and $P^{k+1} = P^k \cap \mathcal{H}$ for all $k\geq 0$, we have $P^k \supseteq \P$ for all $k\geq 0$ through the algorithm.}

		
		The algorithm terminates when $V^k \setminus \Vused = \emptyset$. Assume this is the case after $K$ iterations. This suggests that each $v \in V^K$ is also an element of $\Vused$\rev{, that is, $V^K\subseteq \Vused$ (line 16)} and $z^{v} \leq \epsilon$ for all $v \in V^K$ at termination \rev{(line 17)}. To show that $\bar{\X}^K$ is a \rev{finite} weak $\epsilon$-solution of \eqref{P} \rev{in the sense of Definition \ref{defn:weakepssol}}, it is sufficient to have	\begin{equation}\label{eq:inc}
		P^K \subseteq \conv f(\bar{\X}^K) + C + B(0,\epsilon).
		\end{equation}
		Similar to \cite[Lemma 5.2]{umer2020}, one can show that the recession cone of $P^k$ is $C$ for any $k$. Hence, $P^K = \conv V^K +C$ holds true. \rev{Then, for any $p\in \revv{P^K}$, there exist $(\lambda^v)_{v \in V^K} \geq 0$ and $\bar{c} \in C$ such that $\sum_{v \in V^K}\lambda^v=1$ and $p = \sum_{v \in V^K}\lambda^vv+ \bar{c}$.}
			 On the other hand, for each \rev{$v \in V^K\subseteq \Vused$, there exist $x^v\in \bar{\X}^K$, $z^v \leq \epsilon$ such that} $(x^v,z^{v})$ is an optimal solution of (PS($v,d^v$)). In particular, there exists $c^v \in C$ such that $v+d^vz^{v}= f(x^v)+c^v$. These imply
		\begin{align*}
			p=\sum_{v \in V^K}\lambda^vv+ \bar{c}
			=&\sum_{v \in V^K}\lambda^v(f(x^v)+c^v-d^vz^{v})+\bar{c}\\
			=&\sum_{v \in V^K}\lambda^v f(x^v)+\sum_{v \in V^K}\lambda^v c^v+\bar{c}-\sum_{v \in V^K}\lambda^vd^vz^{v}\revv{.}
		\end{align*}
		Clearly, $\sum_{v \in V^k}\lambda^vf(x^v) \in \conv f(\mathcal{\bar{X}}^K)$ and $\sum_{v \in V^K}\lambda^v c^v+\bar{c} \in C$. Moreover, as $z^v \leq \epsilon$ and $\norm{d^v}=1$ for each $v\in V^K$, we have 
		$\sum_{v \in V^K}\lambda^v z^v d^v \in B(0,\epsilon)$. Hence, $p \in \conv f(\bar{\X}^K) + C + B(0,\epsilon)$ and this implies \eqref{eq:inc} as $p\in \revv{P^K}$ is arbitrary.
		
		For the vertex selection rules that give $\Vinfo \neq \emptyset$, assume $\hat{h}= \max_{v \in V^K \setminus \Vused} z^v \leq \epsilon$ holds for some $K$. Note that if there exists a vertex $v$ in $V^K \cap \Vused$, then $z^v \leq \epsilon$ has to be satisfied by the structure of the algorithm. Hence, for the vertices $V^K$ of $P^K$, we have \rev{$z^v \leq \epsilon $}. \revv{S}imilar steps for the previous case can be applied to show that \rev{$\bar{\X}^K$ is a finite weak $\epsilon$-solution of \eqref{P}.}
	\end{proof}
		
	\subsection{Direction selection rules}\label{sect:var_direction}
	
	\rev{As explained in Section \ref{ch:intro}, algorithms from \cite{cvop2014} and \cite{ehrgottSS2011} follow the framework given by Algorithm \ref{alg_1}.}
In \cite{cvop2014}, a fixed direction parameter $d$ is used within \eqref{PS} and in \cite{ehrgottSS2011}, $d^v$ is taken as $\hat{p}-v$ for a fixed point $\hat{p} \in \mathcal{P}$. Before proceeding with the proposed direction selection rules, we use \revvv{two examples (Examples 1 (for $p=2$) and 2 provided in \Cref{sect:examples})} to show that the selection of $d$ in \cite{cvop2014} and $\hat{p}$ in \cite{ehrgottSS2011} affect the performance of these algorithms significantly. The results are summarized in \Cref{table:fixdir}, in which we report the number of scalarization models (SC) and the \rev{CPU time} (T).\footnote{\revvv{See \Cref{sect:Comput_pre} for the computer and solver specifications.}} 
	
		\begin{table}[h]
		\caption{Results  for different $d$ and $\hat{p}$ values fixed for algorithms in \cite{cvop2014} and \cite{ehrgottSS2011}, respectively}
		\resizebox{\textwidth}{!}{	\begin{tabular}{c|cccc|cccc|ccc|ccc|}
				\cline{2-15}
				& \multicolumn{8}{c|}{{Example 1} ($p=2, \epsilon= 0.005)$}                                                                         & \multicolumn{6}{c|}{{Example 2} ($\epsilon= 0.05)$}                                                      \\ \cline{2-15} 
				& \multicolumn{4}{c|}{$d$ in \cite{cvop2014}}                            & \multicolumn{4}{c|}{$\hat{p}$ in \cite{ehrgottSS2011}}                            & \multicolumn{3}{c|}{$d$ in \cite{cvop2014}}           & \multicolumn{3}{c|}{$\hat{p}$ in \cite{ehrgottSS2011}}                         \\
				& \textbf{$\begin{pmatrix} 1\\1 \end{pmatrix}$} & \textbf{$\begin{pmatrix} 0.1\\1 \end{pmatrix}$} & \textbf{$\begin{pmatrix} 0.01\\1 \end{pmatrix}$} & \textbf{$\begin{pmatrix} 0.001\\1 \end{pmatrix}$} & \textbf{$\begin{pmatrix} 1\\1 \end{pmatrix}$} & \textbf{$\begin{pmatrix} 0.1\\1 \end{pmatrix}$} & \textbf{$\begin{pmatrix} 0.01\\1 \end{pmatrix}$} & \textbf{$\begin{pmatrix} 0.001\\1 \end{pmatrix}$} & \textbf{$\begin{pmatrix} 1\\1 \end{pmatrix}$} & \textbf{$\begin{pmatrix} 0.1\\1 \end{pmatrix}$} & \textbf{$\begin{pmatrix} 0.01\\1 \end{pmatrix}$} & \textbf{$\begin{pmatrix} 100\\100 \end{pmatrix}$} & \textbf{$\begin{pmatrix} 10\\1000 \end{pmatrix}$} & \textbf{$\begin{pmatrix} 10\\10000 \end{pmatrix}$} \\ \hline
				\multicolumn{1}{|c|}{SC}        & 29         & \textbf{27}           & 35            & 41             & \textbf{17 }        & 31           & 39            & 47             & \textbf{31}         & 56           & 304           & \textbf{30 }              & 178              & 741               \\
				\multicolumn{1}{|c|}{T}      & 10.10      & \textbf{9.94}         & 13.34         & 14.62          & \textbf{6.27 }      & 11.06        & 13.97         & 23.76          & \textbf{13.60}      & 18.19        & 93.81         & \textbf{10.52}            & 61.12            & 226.63            \\ \hline
		\end{tabular}}
		\label{table:fixdir}
	\end{table}

	Motivated \revv{by} these results, we propose \rev{two additional selection rules for the direction parameter to be used in PS scalarization.} 
	
	\paragraph{Adjacent vertices\revv{-}based approach (Adj).}
	When the double description method is used to solve the vertex enumeration problem (line 20 of \Cref{alg_1}), the set\rev{s} of adjacent vertices for each vertex \rev{are also returned}. We use this information to compute the direction parameter for a given vertex. In particular, for each vertex $v$, the normal direction of a hyperplane passing through the adjacent vertices of $v$ is computed. \rev{The main motivation for this selection rule is that the positions of the adjacent vertices depend on the curvature of the upper image as they are formed by the supporting hyperplanes of the upper image at points that are possibly close to the current vertex. This geometric intuition is illustrated in \Cref{fig:adj_motivation} for a two-dimensional setting.}

	Note that the adjacency list of a vertex may also contain extreme directions. If for a vertex $v$, we have an adjacent extreme direction $z$, then it is known that the line segment $\{v+rz \in \R^p \mid r \geq 0\}$ is a face of $P^k$. In these cases, we construct an artificial adjacent vertex by moving the current vertex along the adjacent extreme direction, that is, we take $v+z$. \rev{An example of this case can be seen in \Cref{fig:adj_motivation}.} 
	
		\begin{figure}[!]
		\centering
		\caption{\rev{Three vertices of the current outer approximation are $v^1, v^2$ and $v^3$. The adjacent vertices of $v^1$ ($v^2$ and $v^1+e_2$) and $v^2$ ($v^1$ and $v^3$) and the directions based on the Adjacent vertices approach from vertices $v^1$ and $v^2$ are illustrated.} }
		\includegraphics[width=0.8\textwidth]{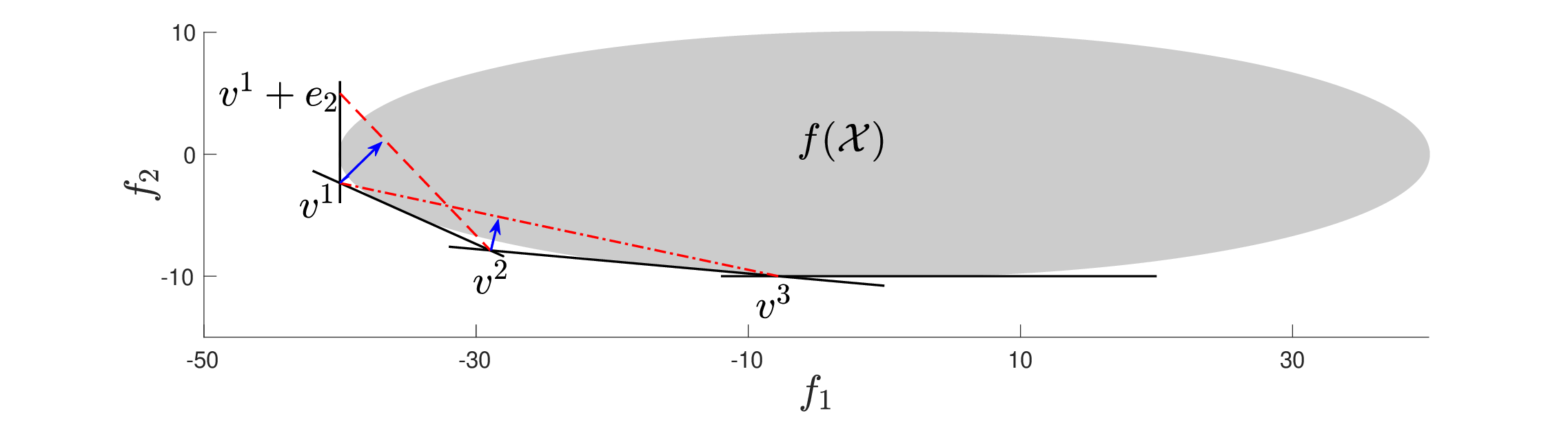}
		\label{fig:adj_motivation}
	\end{figure}
	
	\rev{In general,} a vertex may have more adjacent vertices than required. In that case, we choose $p$ \rev{of the vertices, $v^{i_1}_{adj},\ldots, \ v^{i_p}_{adj}$ and compute $d \in \R^p$ such that $d^\T v^{i_1}_{adj} = \ldots = d^\T v^{i_p}_{adj}$, see} 
	\revv{line 3} of Procedure \ref{proc:adj_dir}. \revv{Note that it is possible to obtain $-d$ as the solution of this system. On the other hand, by} \Cref{prop:PS2}, if $d \in \Int C$ we obtain a weak minimizer by solving \eqref{PS}. Hence, we first check if one of two candidate unit normal vectors\revv{, $d$ or $-d$,} is in $\Int C$. If this \revv{is} not the case, we use a predetermined direction $d \in \Int C$ where $\norm{d}=1$. \revv{In particular}, we set $d=\frac{\sum_{i=1}^{r}c^i}{\norm{\sum_{i=1}^{r}c^i}}$, where $c^1, \ldots, c^r$ are the extreme directions of $C$.
	\begin{procedure}[h]
		\caption{ChooseDirection($v$,$P$)}
		\begin{algorithmic}[1]
			\STATE Let $v^1_{adj}, v^2_{adj}, \ldots, \ v^s_{adj}$ be the vertices adjacent to $v$;
			\STATE Pick $p$ \rev{vertices $v^{i_1}_{adj},\ldots, \ v^{i_p}_{adj}$ and let $A = [v^{i_1}_{adj},\ldots, \ v^{i_p}_{adj}] \in \R^{p\times p}$}; 
			\STATE Compute \rev{$d \in B(0,1)$ satisfying $A^\T d = e$ (}$d=\frac{A^{-\T}e}{\norm{A^{-\T}e}}$ \rev{if $A^{-1}$ exists.)};	
			\IF {$d \in \Int C$}
			\RETURN $d$
			\ELSIF {$-d \in \Int C$}
			\RETURN $-d$
			\ELSE
			\RETURN  \revv{$d=\frac{\sum_{i=1}^{r}c^i}{\norm{\sum_{i=1}^{r}c^i}}$, where $c^1, \ldots, c^r$ are the extreme directions of $C$.} 
			\ENDIF
		\end{algorithmic}\label{proc:adj_dir}		
	\end{procedure}

	\begin{remark}	For $p=2, C=\R^2_+$, it is easy to \rev{show} that either $d \in \Int C$ or $-d \in \Int C$ is satisfied \revv{(see \Cref{fig:adj_motivation} for an illustration).} However, this may not be true in general for \rev{$p \geq 3$} or for $C \neq \R^2_+$. Through our computational tests, we obtain some counterexamples for {$p=4$}. Even though we haven't encountered this situation for $p=3$, it is still possible that $\pm d \notin \Int \R^3_+$. \revv{See \Cref{fig:counterex} for an illustration}, in which vertex $v=0\in \R^3$ has 4 adjacent vertices: $v^1_{adj} = (-1, 1, 1)^\T,\: v^2_{adj} = (1, 1, -1)^\T, \: v^3_{adj} = (3.3, -2.2, -1.1)^\T, \: v^4_{adj} = (5, -4, 1)^\T$. (Note that $v,v^1_{adj},\ldots,v^4_{adj}$ are still extreme points of the set $\conv \{v,v^1_{adj},\ldots,v^4_{adj}\} + \R^3_+$. For illustrative purposes, the figure shows $\conv \{v,v^1_{adj},\ldots,v^4_{adj}\} $.) The normal direction $d$ of the hyperplane passing through $v^2_{adj},v^3_{adj},v^4_{adj}$ satisfies $d \notin \Int \R^3_+$ and $-d \notin \Int \R^3_+$. 
			\begin{figure}[h]
				\centering
				\caption{\rev{An example with $p = 3, C = \R^3_+$ for which $d$ computed in line 5 of Procedure \ref{proc:adj_dir} is not in $\pm \Int \R^3_+$. In particular, the normal direction of the hyperplane passing thorough $v^2_{adj}, v^3_{adj}, v^4_{adj}$ is $d = (0.2273, 0.1818, -0.5909)^\T$.}}
				\includegraphics[width=0.4\textwidth]{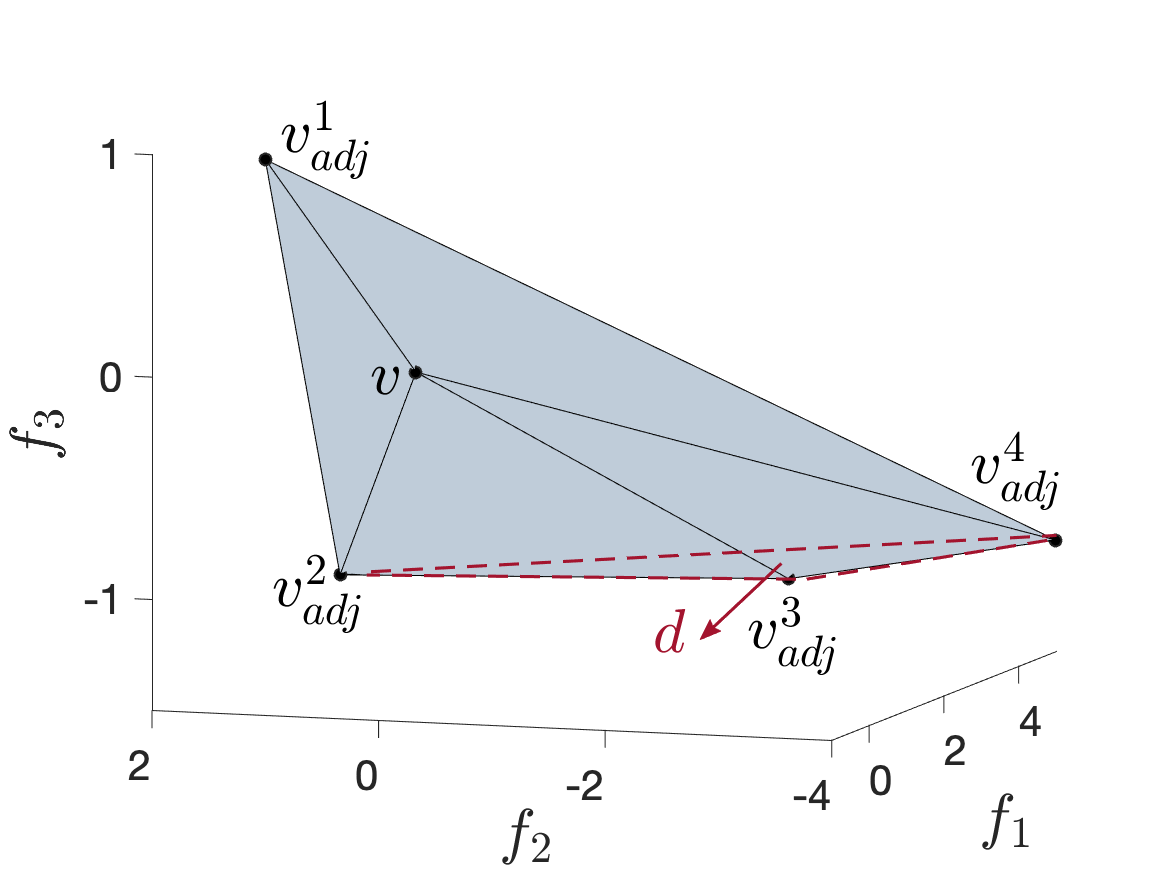}
				\label{fig:counterex}
			\end{figure}
	\end{remark}
	
	\paragraph{Ideal point\revv{-}based approach (IP).}
	For this approach, we assume that the ordering cone is $\R^p_+$, hence the ideal point $y^I$ is well defined. For a vertex $v$ of the current outer approximation, we consider the vector $v-y^I$. Note that for the initial iteration, we have $v-y^I=0$. In the subsequent iterations, we obtain $v-y^I \geq 0$ since $P^k \subseteq P^0$ for any $k$ throughout the algorithm. For $i \in  \{1,\ldots, p\}$, we define $d_i = \frac{1}{v_i - y^I_i+\bar{\epsilon}}$ for some sufficiently small $\bar{\epsilon}>0$, which is added for computational convenience. \rev{For the numerical examples, we take $\bar{\epsilon}=10^{-5}$.} Then $d$ is normalized such that $\norm{d} = 1$. Geometrically, this corresponds to considering the points $(v_i-y^I_i) e_i$ for each $i \in \{1,\ldots,p\}$ and \revv{taking} the normal direction of the hyperplane passing through them. 
	

	\subsection{Vertex selection rules}\label{sect:var_vert}
	
	We propose different vertex selection rules that can be used within \Cref{alg_1}. As will be detailed in \Cref{sect:alg_literature}, there are algorithms from the literature using some vertex selection rules that require solving additional optimization models, see \cite{Dorfler2020,Klamroth2002}. Our motivation is to propose vertex selection rules which are computationally less complicated yet have the potential to \rev{increase the efficiency of Algorithm \ref{alg_1}}. 
	
	\paragraph{Vertex selection with clusters (C).}
	This vertex selection rule clusters the vertices of the current outer approximation and visits these clusters sequentially. The motivation is to ensure that the vertices from different regions of the outer approximation are selected in a balanced fashion. 
	
	The first step is to fix centers for the clusters\revv{, which can be done in different ways.} {For this purpose, we first solve} \eqref{PS} for each vertex of the initial polyhedron $P^0$. The corresponding supporting halfspaces are intersected with the current polyhedron to obtain $P^1$. The same procedure is repeated for $P^1$ and the vertices of $P^2$ are selected as the centers of the clusters, see Procedure \ref{proc:centers}. \revv{It is possible to use the vertices of $P^1$ to have fewer clusters or continue in the same fashion and use the vertices of $P^k$ for some $k>2$ to have more clusters.} Note that SelectCenters($V^0$,$P^0$) has to be called right before line 6 of \Cref{alg_1}.
	
	\begin{procedure}[h]
		\caption{SelectCenters($V^0$,$P^0$)}
		\begin{algorithmic}[1] 
			\FOR {$k\in \{0,1\}$}
			\FORALL {$v \in V^k$}
			\STATE  Solve \eqref{PS}, let $(x^{v},z^{v})$ be the optimal solution, $\bar{\X} \gets \bar{\X} \cup \{x^{v}\}$;
			\STATE Find supporting halfspace $\mathcal{H}$ of $\mathcal{P}$ at $y^v=v+z^{v}d$, $P^{k+1} \gets  P^k \cap \mathcal{H}$;
			\ENDFOR
			\STATE Compute the set of vertices $V^{k+1}$ of $P^{k+1}$; 
			\ENDFOR
			\STATE $\mathcal{C}=\emptyset$.
			\FOR {$v \in V^{2}$}
			\STATE $\mathcal{C} \gets \mathcal{C} \cup \{v\}$;
			\ENDFOR
			\RETURN $\mathcal{C}$
		\end{algorithmic}\label{proc:centers}
	\end{procedure}

	For the remaining iterations of \Cref{alg_1}, each vertex of the current outer approximation is assigned to the cluster whose center is the closest with respect to the Euclidean distance. Whenever a vertex has to be selected, a vertex is chosen arbitrarily from the cluster in turn. If there is no unexplored vertex assigned to the current cluster, then the algorithm selects the next nonempty cluster. The pseudocode is given in Procedure \ref{VertCenters}.
	
	\begin{procedure}[h]
		\caption{SelectVertex($t, V^k, \Vused, \mathcal{C}$)}
		\begin{algorithmic}[1] 
			\STATE \rev{Initialize the $i^{th}$ cluster with center \rev{$c_i \in \mathcal{C}$} as $\mathcal{C}_i = \emptyset$ for $i\in \{1,\ldots, |\mathcal{C}|\}$;} 
			\FORALL {$v \in V^k \setminus \Vused$} 
			\STATE \rev{Pick $i \in \argmin_{i \in \{1,\cdots,|\mathcal{C}|\}} \norm{v-c_i}$} and $\mathcal{C}_i \gets \mathcal{C}_i \cup \{v\}$;
			\ENDFOR		
			\STATE \rev{$\textnormal{current}\equiv t+1 \pmod{|\mathcal{C}|}$};
			\WHILE {$\mathcal{C}_{\textnormal{current}} = \emptyset$}
			\STATE $t\gets t+1$ and $\textnormal{current}\gets t+1 \pmod{|\mathcal{C}|}$;
			\ENDWHILE
			\STATE Pick an arbitrary $v \in \mathcal{C}_{\textnormal{current}}$;
			\STATE $t\gets t+1$;
			\RETURN $v$, $t$
		\end{algorithmic} \label{VertCenters}
	\end{procedure}

	\paragraph{Vertex selection with adjacency information (Adj).}
	Recall that \emph{bensolve tools} returns the adjacency information for the vertices of the outer approximation. We use this to detect ``isolated" vertices of the current outer approximation. The motivation is to obtain uniformity among the vertices of the outer approximation and consequently, the $C$-minimal points found on the boundary of the upper image \revv{as long as the geometry allows}. For each vertex $v$, the procedure finds the minimum distance, say $dist_v$, from $v$ to its current neighbors. Then, it selects the vertex which has the maximum $dist_v$, see Procedure \ref{proc:ad}. 
	
	\begin{procedure}[h]
		\caption{SelectVertex($V^k$,$\Vused$)}
		\begin{algorithmic}[1] \label{proc:ad}
			\FORALL {$v \in V^k \setminus \Vused$}
			\STATE Let $\rev{V^v_{adj}}$ be the set of vertices adjacent to $v$;
			\STATE Let  $dist_v=\min_{\tilde{v} \in \rev{V^v_{adj}}} \norm{\tilde{v}-v}$;
			\ENDFOR
			\RETURN $v^* \in \argmax_{v \in V^k \setminus \Vused} dist_v$.
		\end{algorithmic}
	\end{procedure}
	\paragraph{Vertex selection using local upper bounds (UB).}
	This selection rule can only be used for $C = \R^p_+$. It is motivated by split algorithms, which are originally designed to solve multiobjective integer programming problems, see for instance \cite{Klamroth2,Holzman,Bektas}. The main idea is to find a set of local upper bounds for the nondominated points and use them to select a vertex. 
	
	For this method, in line 1 of \Cref{alg_1}, we additionally fix an upper bound $u=M e$ where $M$ is a sufficiently large number such that $\{u\}-\R^p_+ \supseteq f(\mathcal{X})$ and we initialize the set of upper bounds as $U=\{(u,\emptyset)\}$. Here, $\emptyset$ means that the upper bound $u$ is not defined based on any other (weakly) nondominated point found in the algorithm. Note that the initial upper bound $u$ satisfies that $y^I\leq u$. Through the algorithm, for any vertex $v$ of $P^k$, it is guaranteed that there exists a local upper bound $u$ such that $v\leq u$. Indeed, among all the upper bounds satisfying $v\leq u$, we fix the one that yields the minimum $\norm{v-u}$ value as the `corresponding' local upper bound for $v$.
	
	Together with the vertex $v$ to proceed, this method returns also the corresponding local upper bound $u$ where $(u,y)\in U$. Let $(x^v,z^v)$ be an optimal solution for \eqref{PS}.  Using $\rev{y^v}:=v+z^vd \in \bd \P$, $p$ new upper bounds $u^1,\ldots, u^p$ are generated as follows: for $j \in \{1, \ldots, p\}$, we set $u^j_i=u_i$ for each $i\in \{1, \ldots, p\}\setminus\{j\}$ and $u^j_j=y_j^v$. Then, the list of upper bounds is updated accordingly.  \rev{To capture these changes, line 7 of \revv{Algorithm} \ref{alg_1} is replaced by $$[v, (u,y), \Vinfo]\gets \textnormal{SelectVertex}(U, V^k, \Vused);$$ and} line 15 of \Cref{alg_1} is modified as \revv{in Procedure \ref{Createub}.}
	
			\begin{procedure}[h]
		\caption{{Line 15 of \Cref{alg_1} for UB}}
		
		\begin{algorithmic}[.]	\label{Createub}	
			\STATE \rev{Solve (PS($v,d$))/(D-PS($v,d$)). Let $(x^v,z^v)$/$w^v$ be optimal solutions and $y^v=v+z^vd$;}
			\FOR {$j = 1:p$}
			\STATE $u^j \gets u$,  $u^j_j \gets y^v_j$, $U \gets U \cup \{(u^j,y^v)\}$;
			\ENDFOR
			\STATE  $U \gets U \setminus \{(u,y)\}$;
		\end{algorithmic}		
	\end{procedure}
	For the vertex selection procedure, we first assign the corresponding upper bounds for each vertex (line 3 of Procedure \ref{vertex_ub}). Then, we choose $v^\ast \in V^k \setminus \Vused$, which has the greatest distance to its corresponding local upper bound $u^\ast$ (lines 10 and 12 of Procedure \ref{vertex_ub}). Note that $\norm{v^*-u^*}$ yields an upper bound for the current approximation error since we have $d_H(P^k,\mathcal{P}) = \max_{v \in V^k} d(v,\P) \leq \max_{v\in V^k} \norm{v-u^v} = \norm{v^*-u^*},$ where $u^v$ denotes the corresponding upper bound of $v$.\footnote{\revvv{In addition to the ones from \Cref{sect:var_direction}, different direction selection} \revv{ methods that are specifically designed for UB are discussed and tested. The details are provided in the appendix.}} As it will be detailed in \Cref{sect:alg_literature}, the algorithms proposed in \cite{Klamroth2002} and \cite{Dorfler2020} also compute an upper bound for the approximation error during any iteration. Different from them, UB does not solve optimization models to find this approximation error.
	
	Note that some upper bounds have components equal to $M$. This causes the distances between $v$ and its corresponding upper bound to be significantly large. To tackle this, the modifications are made, see lines 4-9 of Procedure \ref{vertex_ub} for the details.
	
	\begin{procedure}[h]
		\caption{SelectVertex($U$,$V^k, \Vused$)}
		\begin{algorithmic}[1] \label{vertex_ub}	
			\STATE $\Vinfo=\emptyset$	
			\FORALL {$v \in V^k \setminus \Vused$}
			\STATE Let $(\hat{u},\hat{y}) \in \argmin_{\{(u,y)\in U \colon v\leq u\}} \norm{v-u}$; 
			\STATE $u^{\textnormal{temp}}=\hat{u}$;
			\FOR {$i =1\colon p$}
			\IF {$\hat{u}_i = M$}
			\STATE $u^{\textnormal{temp}}_i \gets \max\{\textnormal{avg},\hat{y}_i\}$; \footnotemark
			\ENDIF
			\ENDFOR
			\STATE $\hat{z} = \norm{v-u^{\textnormal{temp}}}, \Vinfo \gets \Vinfo \cup \{(v,\hat{u}, \hat{z})\}$;
			\ENDFOR
			\STATE Let $(v^\ast, u^\ast, z^\ast) \in \argmax_{(v,u,z) \in \Vinfo } z$, and let $y^\ast$ be such that $(u^\ast,y^\ast)\in U$;
			\RETURN $v^\ast, (u^\ast,y^\ast), \Vinfo$		
		\end{algorithmic}		
	\end{procedure} \footnotetext{ avg is the average of $\hat{u}_i$ values that are not equal to $M$, that is $\textnormal{avg}=\frac{\sum_{i\colon \hat{u}_i \neq M} \hat{u}_i}{|\{i: \hat{u}_i \neq M\}|}$. Note that $u^{\textnormal{temp}}$ is in $\P$ by construction and it is used only to compute a better upper bound for $d(v,\P)$.}
	
	\begin{remark}\label{rem:KTW} 
		For Procedure \ref{vertex_ub} to work correctly, we modify \Cref{alg_1} slightly as follows: In line 1, we initialize $V^0_{\textnormal{info}}$ as an empty set and in line 19 we also update $V^{k+1}_{\textnormal{info}} \gets \Vinfo$. \revvv{The same modification is also applied to the algorithms from the literature that will be explained in \Cref{sect:alg_literature}. }
	\end{remark}

	\subsection{Algorithms from the literature} \label{sect:alg_literature}
	In this section, we briefly explain similar approaches from the literature, namely, Algorithm KTW from \cite{Klamroth2002}, Algorithm DLSW from \cite{Dorfler2020}, and \rev{an algorithm from \cite{umer2020}.\footnote{\revvv{The pseudocodes of the corresponding procedures are provided in the appendix.}} The algorithm from \cite{umer2020} does not totally fit into the framework of \Cref{alg_1} as it solves a different scalarization model in each iteration. Here, we consider a variant of \Cref{alg_1}, which is motivated by the one from \cite{umer2020}.}
	
\paragraph{Algorithm KTW.}
The outer approximation algorithm provided in \cite{Klamroth2002} is a special case of the general framework given by \Cref{alg_1}, applied to a maximization problem. Here, we shortly explain it for problem~\eqref{P}. KTW assumes $0\in\mathcal{P}$. 
In iteration $k$, it minimizes the distance (with respect to an oblique norm
) between the current outer approximation and the upper image. To do that, for every vertex $v \in V^k \setminus \Vused$, it solves:
\begin{align} \label{eq:GB}
\textrm{maximize } \lambda \:\: \textrm{subject to } f(x) \leq \lambda v, \: x \in \X,\: \lambda \in \R. 
\end{align}		
Assuming $(x^v,\lambda^v)$ is an optimal solution to \eqref{eq:GB}, 
$v^* \in \argmin_{v \in V^k \setminus \Vused} \lambda^v$ is selected in \Cref{alg_1}.

Problem~\eqref{eq:GB} is equivalent to~\eqref{PS} with $d = -v$. Indeed, $(x^v,\lambda)$ is an optimal solution to \eqref{eq:GB} if and only if $(x^{v},1-\lambda)$ is an optimal solution to (PS($v,-v$)). Note that KTW is similar to the algorithm proposed in~\cite{ehrgottSS2011} in the sense that $\hat{p}=0$ is fixed. However, different from~\cite{ehrgottSS2011}, the selection of the vertices in each iteration is not arbitrary in KTW. 

\rev{For the examples from Section \ref{sect:examples}, it is not guaranteed that $0\in \P$. To overcome this,} for the computational tests that will be presented in \Cref{ch:comp_results},  we take $\hat{p}$ as in \eqref{eq:phat}. Then, we modify the model given in \eqref{eq:GB} as:
\begin{align} \label{eq:GB2}
\textrm{maximize } \lambda \:\: \textrm{subject to }  f(x) \leq \hat{p}+\lambda(v-\hat{p}), \: x \in \X, \: \lambda \in \R.
\end{align}	
\rev{This simply corresponds to shifting the upper image as well as the vertices of the outer approximation by $\hat{p}\in \P$.}
To have a more efficient implementation to select the vertices, we do not solve \eqref{eq:GB2} for all $v \in V^k \setminus \Vused$. Instead, we check if \eqref{eq:GB2} is solved in previous iterations.

	\paragraph{Algorithm DLSW.}
Dörfler et al. \cite{Dorfler2020} propose a vertex selection rule that uses the inner approximation obtained in each iteration. Accordingly, 
for each vertex $v$ of the current outer approximation $P^k$, the following problem\rev{, which measures the distance from $v$ to the current inner approximation $\mathcal{I}^k:=\cl\conv f(\bar{\mathcal{X}}^{k})+C$,} is solved: 
\begin{align} \label{eq:QP}
\textrm{minimize } \norm{y-v}^2 \:\: \textrm{subject to } y \in \mathcal{I}^k.
\end{align}	
Then, \rev{vertex $v^\ast$ which is the farthest away from the inner approximation is selected, that is,} $v^\ast \in \argmax_{v \in V^k \setminus \Vused}\norm{y^v-v}$, where $y^v$ is an optimal solution of \eqref{eq:QP}. \rev{Moreover,} $d$ is set to $\frac{y^{v^\ast}-v^\ast}{\norm{y^{v^\ast}-v^\ast}}$. An upper bound for the Hausdorff distance between the current outer approximation and the upper image is found as $\norm{y^{v^\ast}-v^\ast}$.

In \cite{Dorfler2020}, also an improved version of this algorithm is presented. Using the information from the previous iterations, it is possible to skip solving \eqref{eq:QP} for some of the vertices. \rev{In particular, for $x^{k}$ being the weak minimizer found in iteration $k$, 
	if \eqref{eq:QP} is solved for some $v$ in the previous iterations and $(y^{v}-v)^\T (f(x^k)-y^{v}) \geq 0$ holds true, then the solution of \eqref{eq:QP} for $v$ in iterations $k$ and $k-1$ are the same by \cite[Theorem 4.4]{Dorfler2020}. Hence there is no need to solve \eqref{eq:QP} for $v$.} See \cite{Dorfler2020} for the details.
	
	\paragraph{Algorithm AUU.}
Ararat et al. \cite{umer2020} propose an outer approximation algorithm to solve \rev{CVOPs}. Even though \rev{this} algorithm is similar to \Cref{alg_1}, it is different since instead of \eqref{PS}, it solves the following scalarization: 
\begin{align} \label{eq:NM}
\textrm{minimize } \norm{z} \:\: \textrm{subject to } f(x) \leq_C v+z, \:  x\in \X, z\in \R^p.
\end{align}	
Note that \eqref{eq:NM} does not require a direction parameter. Instead, it computes the distance \rev{$z^v$} from $v$ to the upper image, \rev{where $(x^v,z^v)$ is an optimal solution}. 
If one solves \eqref{eq:NM} for all vertices of the current outer approximation $P^k$, then it \revv{is} possible to compute the exact Hausdorff distance between $P^k$ and $\mathcal{P}$ as $\max_{v\in V^k} z^v$, where $V^k$ is the set of vertices of $P^k$. See \cite{umer2020} for the details.

\rev{As the main motivation of this study is to compare the effect of parameter selection rules for the PS scalarization, we consider a variant (namely, Algorithm AUU) of \Cref{alg_1} which is motivated by the algorithm from \cite{umer2020}.} For  AUU, we solve \eqref{eq:NM} only for \rev{parameter selection for the PS scalarizations} and we still execute line 14 of \Cref{alg_1}. \rev{The direction parameter is set as $d=\frac{z^v}{\norm{z^v}}$. Note that this is the direction \revv{that} would yield the minimum optimal objective function value for \eqref{PS}. } 


In \cite{umer2020}, the vertex selection is arbitrary\rev{, however, a special vertex selection rule is discussed in \cite[Corollary 7.4 and Remark 7.5]{umer2020} to obtain a convergence result}. Here, for algorithm AUU, we \rev{apply this vertex selection rule, that is, we} solve \eqref{eq:NM} for each vertex of $P^k$ and select the one that is the farthest away from the upper image. This allows the algorithm to provide the approximation error in each iteration rather than at termination. Indeed, \rev{by Lemma \ref{lem:Hausdorffdist}}, this approximation error is equal to the Hausdorff distance between $\P$ and the outer approximation. As in KTW, the solution of \eqref{eq:NM} for a vertex $v$ is added to $\Vinfo$, if \eqref{eq:NM} is not solved for $v$ in the previous iterations. 

	\section{\revvv{Test examples and preliminary results}} \label{sect:Comput_pre}

Before we proceed \revvv{with the main computational study}, we provide the numerical examples that will be used throughout. \revvv{Moreover, to possibly decrease the number of variants to be tested for the main computational study, we provide some preliminary results.}

\revvv{For the computational tests provided here and in Section~\ref{ch:comp_results}, the algorithms} are implemented using MATLAB R2020b. The scalarizations are solved via CVX v2.2 \cite{cvx,gb08} and SeDuMi 1.3.4 \cite{s98guide}.  \revvv{Vertex enumeration problems are solved by \textit{bensolve tools} \cite{lohneWeissing2017}, which is a free solver providing commands for {the} calculus of convex polyhedra and polyhedral convex functions for Octave and MATLAB.} The computer specification that is used throughout the computational study is  Intel(R) Core(TM) i7-4790 CPU @ 3.60GHz.

\subsection{Test examples} \label{sect:examples}
Consider problem \eqref{P}, where the ordering cone $C$ is $\R^p_+$; and objective function $f:\R^n\to \R^p$ and feasible region $\mathcal{X}\subseteq \R^n$ are given as follows:

\begin{enumerate}
	\item[]\textit{Example 1.} $f(x) = x, \: \mathcal{X}=\{x \in \R^n \mid \norm{x-e} \leq 1, \: x \geq 0\}$ for $n=p \in \{\rev{2,}3,4\}$.
	\item[]\textit{Example 2.}\footnote{This example is bounded in the sense that the upper image is included in $y^I+\R^2_+$ where $y^I=0 \in \R^2$. In theory, the weighted sum scalarizations for the initialization step\rev{, namely, $(\inf_{x\in\R_+} x)$ and $(\inf_{x\in\R_+} \frac{1}{x})$ } do not have \rev{solutions as $x=0$ is not in the domain of the problem and $(\inf_{x\in\R_+} \frac{1}{x}=0)$ is not attained, respectively. 
			For the computational tests, we} manually \rev{set} the initial outer approximation as $P^0=\{0\}+\R^2_+$. 
	} $f(x)=\left(x,\: \frac{1}{x} \right)^\T, \: \mathcal{X} = \R_+$.
	\item[]\textit{Example 3.}\footnote{This set of examples \revv{is} taken from $\cite{Dorfler2020}$.} Four instances of the following example are considered for $a \in \{5,7,10,20\}$:  $$f(x) = x, \: \mathcal{X}=\left\{x \in \R^3 \big| \left(\frac{x_1-1}{1}\right)^2+\left(\frac{x_2-1}{a}\right)^2+\left(\frac{x_3-1}{5}\right)^2\leq 1\right\}.$$ 
	\item[]\textit{Example 4.} Three instances of the following example are considered for $a \in \{5,7,10\}$: $$f(x) = x, \: \mathcal{X}=\left\{x \in \R^4 \big| \left(\frac{x_1-1}{1}\right)^2+\left(\frac{x_2-1}{a}\right)^2+\left(\frac{x_3-1}{5}\right)^2 +\left(\frac{x_4-1}{1}\right)^2\leq 1\right\}.$$ 
	\item[]\textit{Example 5.} 
	$f(x)=P^\T x, \: \X=\{x \in \R^n \mid A^\T x \leq b\},$
	where $P \in \R^{p \times n}$,  $A \in \R^{m \times n}$ and $b \in \R^m$. 
	For the computational tests, each  component of $A$ and $P$ is generated according to independent normal distributions with mean 0, and variance 100; each component of $b$ is generated according to a uniform distribution between 0 and 10. Components are rounded to the nearest integer for computational simplicity.	
\end{enumerate}	

\revvv{We consider examples where the ordering cone is the positive orthant, mainly because some of the algorithms considered in our computational tests are designed to solve MOPs only. However, Algorithm \ref{alg_1} can solve vector optimization problems with ordering cones different from the positive orthant. To illustrate this feature, we consider Example 1 for $p\in \{2,3\}$ with different ordering cones taken from \cite{umer2020}. The details and results are provided in the appendix.}

\subsection{A preliminary computational study} \label{subsect:preliminary}
As \revv{a} preliminary analysis, we compare the proposed direction selection methods with the ones from \revv{the} literature. 

For the \emph{fixed point approach} (FP) from \cite{ehrgottSS2011}, $\hat{p} \in \mathcal{P}$ is set as 
\begin{align}\label{eq:phat}
\hat{p}_i := 2 \max\{f_{i}(x^1), \ldots, f_{i}(x^p)\}-v^0_i, 
\end{align} where \rev{$x^i$ is the} optimal solution of (WS($w^i$)) for $i \in \{1, \ldots, p\}$ found in the initialization step and $v^0$ is a vertex of the initial polyhedron $P^0$. \rev{Note that as the ordering cone is $\R^p_+$, we have $v^0 = y^I$. Moreover, it is not difficult to see that $\hat{p} \in \Int \P$ for all the test examples.}

The direction parameter for a vertex $v$ is set as $d= \frac{\hat{p}-v}{\norm{\hat{p}-v}}$. By \Cref{prop:PS3}, there are optimal solutions to \eqref{PS} and \eqref{PS-dual} under this direction selection rule. However, for the computational tests, we ensure $d \in \Int \R^p_+$ for computational simplicity and to reduce the numerical issues. Accordingly, if $d \notin \Int \R^p_+$ then we use \revv{the} predetermined direction $d$ as in (Adj)\revv{, which gives $d = \frac{e}{\norm{e}}$ since $C=\R^p_+$.} For the \emph{fixed direction approach} (FD) from \cite{cvop2014}, $d$ is \revv{also} set to $\frac{e}{\norm{e}}$.

\revvv{For our preliminary analysis, } we employ different ``simple" vertex selection rules which are summarized below. 

\begin{itemize}
	\item \textbf{Closest to the Ideal Point (Id)}\label{Id}: Assume that the ordering cone is $\R^p_+$ and the ideal point $y^I$ is well-defined. Choose $v \in V^k \setminus \Vused$ such that  $ \left\lVert v-y^I \right\rVert$ is minimum.
	\item \textbf{\rev{Farthest} to an Inner Point (In)}\label{In}: 
	\rev{Choose $v \in V^k \setminus \Vused$ which yields the greatest distance  $\norm{\hat{p}-v}$}, where $\hat{p}$ is computed as in \eqref{eq:phat}. 
	\item \textbf{Random Choice (R)}\label{R}: Randomly choose a vertex among $V^k \setminus \Vused$, using \revv{the} discrete uniform distribution. \rev{For the numerical examples, we run the same example five times for this vertex selection rule.}
\end{itemize}

For the test examples in \Cref{sect:examples}, we run Algorithm \ref{alg_1} for direction selection rules FD, Adj, IP, FP and vertex selection rules Id, R, In. \rev{For the total of 12 variants formed by these parameter selection rules,} we report the total number of scalarization models solved (SC) as well as the total CPU time (T) required, see \Cref{tab:prelim}. The results for Example~5 display the averages over 20 randomly generated problem instances.

\begin{table}[htbp]
	\centering
	\caption{Preliminary computational results \rev{for Examples 1-5. The columns and rows show the direction and simple vertex selection rules discussed in Section \ref{sect:var_direction}, respectively. Each instance is solved in 12 different settings and the smallest SC and T values are indicated by boldface.}\footnotemark}
	\resizebox{\textwidth}{!}{
		\begin{tabular}{|c|cc|cc|cc|cc||cc|cc|cc|cc|}
			\cline{2-17}    \multicolumn{1}{r|}{} & \multicolumn{2}{c|}{FD} & \multicolumn{2}{c|}{Adj} & \multicolumn{2}{c|}{IP} & \multicolumn{2}{c||}{FP} & \multicolumn{2}{c|}{FD} & \multicolumn{2}{c|}{Adj} & \multicolumn{2}{c|}{IP} & \multicolumn{2}{c|}{FP} \\ \cline{2-17}
			\multicolumn{1}{r|}{} & SC    & T  & SC    & T  & SC    & T & SC    & T & SC    & T  & SC    & T & SC    & T  & SC    & T \\
			\cline{2-17}    \multicolumn{1}{r|}{} & \multicolumn{8}{c||}{Example 1, $p=3, \epsilon=0.005$}               & \multicolumn{8}{c||}{Example 1, $p=4, \epsilon=0.05$} \\
			\hline
			Id    & 488   & 134.73 & 425   & 116.76 & 627   & 169.90 & 461   & 126.79 & 900   & 275.15 & 1099  & 328.62 & 600   & 177.79 & 682   & 205.87 \\
			R     & 461.2 & 127.60 & 392.2 & 107.55 & \textbf{388}   & \textbf{105.51 }& 407.8 & 111.29 & 460   & 153.96 & 460.6 & 140.05 & 420   & \textbf{126.76} & \textbf{416 }  & 128.83 \\
			In    & 502   & 134.40 & 420   & 111.77 & 457   & 118.89 & 436   & 114.06 & 755   & 232.26 & 605   & 183.87 & -     & -     & -     & - \\
			\hline
			\multicolumn{1}{r|}{} & \multicolumn{8}{c||}{Example 2, $\epsilon=0.005$}                    &       & \multicolumn{1}{r}{} &       & \multicolumn{1}{r}{} &       & \multicolumn{1}{r}{} &       & \multicolumn{1}{r}{} \\
			\cline{1-9}    Id    & 154   & 35.38 & 134   & 31.11 & 141   & 31.83 & 283   & 63.99 &       & \multicolumn{1}{r}{} &       & \multicolumn{1}{r}{} &       & \multicolumn{1}{r}{} &       & \multicolumn{1}{r}{} \\
			R     & 127.8 & 27.64 & 120.8 & 26.54 & \textbf{113}   & \textbf{24.49} & 223.6 & 48.40 &       & \multicolumn{1}{r}{} &       & \multicolumn{1}{r}{} &       & \multicolumn{1}{r}{} &       & \multicolumn{1}{r}{} \\
			In    & 222   & 48.48 & 165   & 35.26 & 131   & 28.74 & 271   & 58.93 &       & \multicolumn{1}{r}{} &       & \multicolumn{1}{r}{} &       & \multicolumn{1}{r}{} &       & \multicolumn{1}{r}{} \\ \hline
			\multicolumn{1}{r|}{} & \multicolumn{8}{c||}{Example 3, $a = 5, \epsilon = 0.05$}             & \multicolumn{8}{c|}{Example 3, $a = 7, \epsilon = 0.05$} \\ \hline
			Id    & 142   & 38.28 & 155   & 41.79 & 114   & 31.45 & 187   & 50.10 & 150   & 39.71 & 134   & 35.55 & 160   & 42.25 & 291   & 77.27 \\
			R     & 125.8 & 33.56 & \textbf{102.4} & \textbf{27.34} & 109   & 29.03 & 173.4 & 46.60 & 123.2 & 32.77 & \textbf{111.4} & \textbf{29.85} & 115.4 & 30.55 & 204.6 & 54.62 \\
			In    & 120   & 31.73 & 116   & 30.77 & 124   & 32.63 & 180   & 47.67 & 146   & 38.97 & 113   & 30.77 & 120   & 31.98 & 224   & 60.08 \\
			\hline
			\multicolumn{1}{r|}{} & \multicolumn{8}{c||}{Example 3, $a = 10, \epsilon = 0.05$}            & \multicolumn{8}{c|}{Example 3, $a = 20, \epsilon = 0.05$} \\
			\hline
			Id    & 150   & 39.95 & 123   & 33.70 & 130   & 34.69 & 273   & 74.12 & 204   & 55.04 & 141   & 38.26 & 148   & 39.87 & 517   & 144.02 \\
			R     & 138.2 & 36.78 & 118.8 & 31.64 & \textbf{108.2} & \textbf{28.66} & 250.6 & 67.15 & 162   & 43.81 & \textbf{128.6} & \textbf{34.86} & 147.2 & 39.43 & 472.8 & 130.10 \\
			In    & 156   & 41.21 & 136   & 35.58 & 129   & 34.25 & 290   & 76.92 & 153   & 41.56 & 130   & 35.09 & 144   & 38.99 & 666   & 183.50 \\
			\hline
			\multicolumn{1}{r|}{} & \multicolumn{8}{c||}{Example 4, $a = 5, \epsilon = 0.05$}             & \multicolumn{8}{c|}{Example 4, $a = 7, \epsilon = 0.05$} \\ \hline
			Id    & 5529  & 2085.90 & 2497  & 630.53 & -     & -     & 7218  & 2806.85 & 8153  & 2538.08 & -     & -     & 1973  & 646.46 & 7099  & 2938.67 \\
			R     & 1185  & 424.59 & \textbf{1107.4} & \textbf{379.02} & -     & -     & -     & -     & 1247.4 & 450.86 & \textbf{1139 } & \textbf{397.09} & -     & -     & 3093.6 & 1567.57 \\
			In    & 2401  & 800.11 & 1584  & 526.82 & -     & -     & 5742  & 2203.63 & 2956  & 1012.39 & 4969  & 1554.33 & 3452  & 1013.43 & 6327  & 2771.90 \\ \hline
			\multicolumn{1}{r|}{} & \multicolumn{8}{c||}{Example 4, $a = 10, \epsilon= 0.05$}            &       & \multicolumn{1}{r}{} &       & \multicolumn{1}{r}{} &       & \multicolumn{1}{r}{} &       & \multicolumn{1}{r}{} \\
			\cline{1-9}    Id    & 3292  & 1120.34 & 2807  & 880.45 & 3050  & 937.37 & -     & -     &       & \multicolumn{1}{r}{} &       & \multicolumn{1}{r}{} &       & \multicolumn{1}{r}{} &       & \multicolumn{1}{r}{} \\
			R     & 1359.6 & 498.34 & \textbf{1150.6} & \textbf{396.60} & -     & -     & -     & -     &       & \multicolumn{1}{r}{} &       & \multicolumn{1}{r}{} &       & \multicolumn{1}{r}{} &       & \multicolumn{1}{r}{} \\
			In    & -     & -     & 2098  & 710.81 & 2106  & 683.75 & 11876 & 5970.35 &       & \multicolumn{1}{r}{} &       & \multicolumn{1}{r}{} &       & \multicolumn{1}{r}{} &       & \multicolumn{1}{r}{} \\
			\hline 
			\multicolumn{1}{r|}{} & \multicolumn{8}{c||}{Example 5, $d = 2, \epsilon = 10^{-8}$    }       & \multicolumn{8}{c|}{Example 5, $d = 3, \epsilon= 10^{-8}$} \\
			\hline
			Id    & 73.8  & 13.93 & 73.3  & 13.86 & 71.0  & 13.41 & 72.8  & 13.73 & 308.5 & 54.64 & 301.3 & 53.55 & 292.5 & 52.04 & 357.0 & 62.89 \\
			R     & 71.6  & 13.74 & 72.5  & \textbf{13.39} & \textbf{70.8 } & 13.47 & 71.3  & 13.46 & 192.1 & 34.97 & 194.8 & 35.50 & \textbf{186.2} & \textbf{33.91} & 200.5 & 36.49 \\
			In    & 75.2  & 14.12 & 72.2  & 13.64 & 71.1  & 13.44 & 73.2  & 13.87 & 337.0 & 59.45 & 284.0 & 50.56 & 294.2 & 51.99 & 327.6 & 57.59 \\
			\hline
	\end{tabular}}%
	\label{tab:prelim}%
\end{table}%
\footnotetext{For some of the instances, \textit{bensolve tools} was unable to perform vertex enumeration due to numerical issues. These are indicated by (-).}

We observe that while \revv{no direction selection method consistently outperforms the others} for the same vertex selection method, FD and FP are outperformed by either Adj or IP in almost all cases except two: In Example 1 with $p=4$ and vertex selection rule R, FP has the smallest SC; and in Example 4 with $a=7$, FD has the smallest SC and T.

In Examples 2-4, FP consistently performs worse than the other direction selection rules in terms of SC and T. On the other hand, \revv{in} more than half of all the cases (21 out of 36) Adj performs the best among other direction selection rules, whereas IP performs the best in around one\revv{-}third of all the cases (13/14 out of 36). Based on these results, 
we fix Adj as \rev{the} direction selection rule for the main computational analysis. 

\rev{When we compare the vertex selection rules for a given direction selection rule, we observe that} R performs the best in almost all instances. The reason that Id and In decrease the algorithm's performance may be the result of these methods' tendency to choose vertices that are close to each other, especially for certain problem instances. However, R also has a \rev{disadvantage} because it may not always provide consistent results. 

\section{Main computational results}
 \label{ch:comp_results}
	In this section, we present a computational study on the examples listed in \Cref{sect:examples}. We compare the proposed variants with the algorithms discussed in \Cref{sect:alg_literature}. We use three stopping criteria: the approximation error, CPU time, and cardinality of the solution set. 	
	
	Recall that depending on our preliminary analysis from \Cref{subsect:preliminary}, we fix the direction selection rule as Adj. On the other hand, we consider all vertex selection rules, namely C, Adj, and UB as introduced in \Cref{sect:var_vert}. In addition, we consider the random vertex selection, namely R, because of its good performance in our preliminary analysis in  \Cref{subsect:preliminary}. Since the direction selection rule is common, we simply call the proposed variants C, Adj, UB, and R. For R, we solve each problem five times and report the average values. 
	
	Before presenting our computational study, we discuss two proximity measures. The first one is the approximation error, which is the realized Hausdorff distance between the upper image and the final outer approximation, i.e., $d_H(P^K,\mathcal{P})$. Note that AUU returns the exact $d_H(P^K,\mathcal{P})$ by its structure. For all the other algorithms, we compute the correct Hausdorff distance using the vertices $v \in V^K$, after the termination. 
	
	The second measure is the hypervolume gap, which is originally designed in the context of multiobjective combinatorial optimization, {see for instance \cite{zitzler1999multiobjective},} and recently used for convex vector optimization \cite{Simay}. Here, we use it similar\revv{ly} to \cite{Simay}. Let \rev{$\mathcal{Q}\subseteq \revv{\R^p}$} 
	be such that \rev{$\mathcal{Q} \supseteq f(\X)$} 
	and $V^K$ be the vertices of the outer approximation at the termination of the algorithm. Let $\Lambda$ be the Lebesgue measure on $\R^p$ and $\Lambda_{\textnormal{in}}:=\Lambda((\conv f(\bar{\X}^K) +C) \cap \rev{\mathcal{Q}}), \Lambda_{\textnormal{out}}:=\Lambda((\conv V^K +C)  \cap \rev{\mathcal{Q}})$. Then, we compute the hypervolume gap as HG $=\Lambda_{\textnormal{out}}-\Lambda_{\textnormal{in}}$. 
	
	Noting that the ordering cone is \rev{$\R^p_+$} for all the examples from \Cref{sect:examples}, the \rev{set $\mathcal{Q}$ is taken as  $\{u\}-C$, where} upper bound $u$ is set such that $u_i := \max_{x\in\X} f_i(x)$ 
	for $i=1,\ldots,p$.	For the examples that $u_i$ cannot be computed (e.g., Example 2), the upper bound is set as $u_i= \max f_i(\bar{\mathcal{X}}^{1}\cup\ldots \cup\bar{\mathcal{X}}^{s})$, where $s$ is the number of the variants of \Cref{alg_1} that we solve the problem and $\bar{\mathcal{X}}^{i}$ is the solution set returned by the $i^{th}$ variant. 
	\rev{For Example 2, the solutions found by the weighted-sum scalarizations at initialization are eliminated from $\bar{\mathcal{X}}^{i}$ when computing $u_i$. This is because these solutions are excessively large in one component and this causes \emph{bensolve tools} not to perform the vertex enumeration as a result of numerical issues.} 
	
	In the tables throughout, we report the total number of models \rev{solved} to choose a vertex (VS), the number of scalarizations solved throughout the algorithm (SC), the cardinality of the solution set (Card), the CPU time (T), the realized approximation error (Err), and the hypervolume gap (HG). Note that VS is positive only for the algorithms in \Cref{sect:alg_literature} as the others do not solve models to select vertices. The realized approximation error is $d_H(P^K,\P)$.  For Example 5 with $p=3$, we generate 20 instances as explained in \Cref{sect:examples} and report the averages of the results; whereas\revv{,} for $p=4$, we generate 3 instances and report the results of each instance separately. We use \revv{the} convhulln() function in MATLAB to compute HG values. The ones that could not be calculated are given as (-) in the tables. 
	
	\subsection{Computational results based on \revv{the} approximation error}\label{sect:app_error}
	
	In this section, we solve the examples with a predetermined approximation error $\epsilon>0$, that is, when the algorithms terminate it is guaranteed that the Hausdorff distance between the outer approximation and $\P$ is less than $\epsilon$. Tables \ref{tab:ex_1-3} and \ref{tab:ex_4-5} show the results for Examples 1-3 and 4-5, respectively. Since 
	the realized errors are close to each other for all the algorithms, the tables do not show Err values. Moreover, since the number of scalarization\revv{s} is the same as the cardinality of the solution set, Card is not reported separately.  
	
	For Example 4, we do not report the results for Algorithms AUU, DLSW, and KTW since \emph{bensolve tools} was unable to perform the vertex enumeration in some iterations. For Example 5, DLSW results are not reported because of solver failures. Moreover, as the upper images are polyhedral for Example 5, the algorithms have the potential to return the exact upper image. We take $\epsilon$ sufficiently small ($10^{-8}$), hence the proximity measures are negligibly small and are not reported  for these problems.

		\begin{table}[h]
		\centering
		\caption{Computational Results for Examples 1-3 \rev{when the algorithms run until returning a finite weak $\epsilon$-solution for provided $\epsilon$ values.}\footnotemark}
		\resizebox{0.9\textwidth}{!}{
			\begin{tabular}{|c|cccc|cccc|cccc|cccc|}
				\cline{2-13}    \multicolumn{1}{r|}{} & \multicolumn{4}{c|}{Example 1, $p=3, \epsilon=0.005$} & \multicolumn{4}{c|}{Example 1, $p=4, \epsilon=0.05$} & \multicolumn{4}{c|}{Example 2, $\epsilon=0.005$} &       &       &       & \multicolumn{1}{c}{} \\
				\cline{1-13}    Algorithm & VS    & SC    & Time  & HG    & VS    & SC    & Time  & HG    & VS    & SC    & Time  & HG    &       &       &       & \multicolumn{1}{c}{} \\
				\cline{1-13}    AUU   & 572   & \textbf{124}   & 206.63 & 0.0247 & 515   & \textbf{62}    & 185.94 & 0.4366 & 387   & \textbf{39}    & 141.46 & 2.1795 &       &       &       & \multicolumn{1}{c}{} \\
				DLSW  & 1281  & 239   & 355.21 & \textbf{0.0095} & 2192  & 128   & 517.33 &  -     & 414   & 57    & 145.25 & \textbf{0.5875} &       &       &       & \multicolumn{1}{c}{} \\
				KTW   & 901   & 143   & 256.86 & 0.0141 & 1735  & 74    & 524.39 & 0.4146 & - & - & - & \multicolumn{1}{c|}{-} &       &       &       &  \multicolumn{1}{c}{} \\
				UB    & 0     & 389   & 113.15 & 0.0111 & 0     & 496   & 171.27 & \textbf{0.244} & 0     & 103   & \textbf{33.18} & 0.9108 &       &       &       & \multicolumn{1}{c}{}\\
				R     & 0     & 382.2 & \textbf{110.33} & 0.0104 & 0     & 449.8 & \textbf{145.61} & 0.2593 & 0     & 120.8 & 37.65 & 0.9166 &       &       &       & \multicolumn{1}{c}{} \\
				C     & 0     & 414   & 120.94 & 0.0118 & 0     & 485   & 154.83 & 0.2593 & 0     & 112   & 34.75 & 0.7918 &       &       &       & \multicolumn{1}{c}{} \\
				Adj   & 0     & 406   & 122.51 & 0.0117 & 0     & 462   & 147.07 & 0.2504 & 0     & 351   & 107.51 & 0.9107 &       &       &       & \multicolumn{1}{c}{} \\ \hline
				\multicolumn{1}{c|}{} & \multicolumn{4}{c|}{Example 3, $a = 5, \epsilon=0.05$} & \multicolumn{4}{c|}{Example 3, $a = 7, \epsilon=0.05$ } & \multicolumn{4}{c|}{Example 3, $a = 10, \epsilon=0.05$ } & \multicolumn{4}{c|}{Example 3, $a = 20, \epsilon=0.05$ } \\
				\hline
				Algorithm & VS    & SC    & Time  & HG    & VS    & SC    & Time  & HG    & VS    & SC    & Time  & HG    & VS    & SC    & Time  & HG \\
				\hline
				AUU   & 160   & \textbf{43}    & 62.21 & 2.0076 & 169   & \textbf{46}    & 62.7  & 2.8207 & 170   & \textbf{47}    & 64.01 & 4.1825 & 198   & \textbf{49}    & 73.11 & 8.7898 \\
				DLSW  & 1531  & 66    & 252.83 & 1.5598 & 330   & 75    & 86.36 & 1.4369 & 1060  & 72    & 168.61 & \textbf{2.299} & 323   & 77    & 85.92 & 5.0659 \\
				KTW   & 299   & 63    & 84.64 & \textbf{0.8997} & 373   & 70    & 104.72 & \textbf{1.3481} & 327$^*$   & {76}$^*$ & 93.49$^*$ & -     & 1764  & 157   & 499.27 & \textbf{1.5611} \\
				UB    & 0     & 110   & 32.51 & 1.1933 & 0     & 112   & 33.36 & 2.0808 & 0$^*$  & {90}$^*$ & \textbf{26.93$^*$}& -     & 0     & 127   & \textbf{37.44} & 7.0989 \\
				R     & 0     & 102.2 & \textbf{29.49} & 1.3571 & 0     & 112.6 & \textbf{32.41} & 3.394 & 0     & 120.8 & 34.88 & 5.0066 & 0     & 133.2 & 38.72 & 4.8624 \\
				C     & 0     & 112   & 31.8  & 1.2773 & 0     & 113   & 32.57 & 2.9109 & 0     & 126   & 36.18 & 3.9589 & 0     & 148   & 42.88 & 4.8074 \\
				Adj   & 0     & 101   & 40.26 & 1.4888 & 0     & 119   & 37.11 & 1.824 & 0$^*$     & {115}$^*$ & 40.66$^*$ & -     & 0$^*$     & {102}$^*$ & 40.05$^*$ & - \\
				\hline
			\end{tabular}%
		}
		\label{tab:ex_1-3}%
	\end{table}%
	
	\footnotetext{
		For some instances  of Example 3, the final plot of the outer approximating set was not accurate even though the approximations during the iterations seem to be correct. We indicate these results by $(\cdot)^\ast$.}

	\begin{table}[h]
		\centering
		\caption{Computational Results for Examples 4 and 5 \rev{when the algorithms run until returning a finite weak $\epsilon$-solution, where $\epsilon=0.05$ for instances of Example 4 and $\epsilon=10^{-8}$ for instances of Example 5.}}
		\resizebox{0.8\textwidth}{!}{
			\begin{tabular}{|c|ccc|ccc|ccc|ccc|}
				\cline{2-10}    \multicolumn{1}{r|}{} & \multicolumn{3}{c|}{Example 4, $a = 5$} & \multicolumn{3}{c|}{Example 4, $a = 7$} & \multicolumn{3}{c|}{Example 4, $a = 10$} &       &       & \multicolumn{1}{r}{} \\
				\cline{1-10}    Algorithm & VS & SC    & Time & VS  & SC    & Time  & VS  & SC    & Time    &       &       & \multicolumn{1}{r}{} \\
				\cline{1-10}    UB  & 0  & 1246  & 827.73 & 0 & 1295  & 909.46 & 0  & 1386  & 1019.63   &       &       & \multicolumn{1}{r}{} \\
				R  & 0   & \textbf{1085}  & \textbf{515.53} & 0 & \textbf{1137.8} & \textbf{550.34} & 0     & 1286  & 592.55   &       &       & \multicolumn{1}{r}{} \\
				C  & 0   & 1163  & 598.77 & 0 & 1203  & 579.92 & 0  & \textbf{1183}  & \textbf{529.31} &       &       & \multicolumn{1}{r}{} \\
				Adj   & 0 &  1108  & 516.48 & 0 & 1203  & 580.53 & 0   & 1290  & 603.17     &       &       & \multicolumn{1}{r}{} \\ \hline
				\multicolumn{1}{c|}{} & \multicolumn{3}{c|}{Example 5, $p=3$ (avg)} & \multicolumn{3}{c|}{Example 5, $p=4$ (ins 1)} & \multicolumn{3}{c|}{Example 5, $p=4$ (ins 2)} & \multicolumn{3}{c|}{Example 5, $p=4$ (ins 3)} \\
				\hline
				Algorithm & VS    & SC    & Time  & VS    & SC    & Time  & VS    & SC    & Time  & VS    & SC    & Time \\
				\hline
				AUU   & 602.45 & 99.6  & 196.72 & 5610  & 275   & 1672  & 667   & 80    & 233.4 & 2557  & 173   & 783.09 \\
				KTW   & 1139.5 & \textbf{96.85} & 291.07 & 14914 & \textbf{263 }  & 3763  & 1174  & \textbf{71 }   & 291.53 & 6531  & \textbf{157 }  & 1645.65 \\
				UB    & 0     & 226.65 & 61.33 & 0     & 861   & 319.7 & 0     & 161   & 47.08 & 0     & 427   & 137.02 \\
				R     & 0     & 195.2 & \textbf{51.32} & 0     & 517.6 & \textbf{177.4} & 0     & 125.8 & \textbf{32.92} & 0     & 331   & \textbf{92.86} \\
				C     & 0     & 231.55 & 63.89 & 0     & 1180  & 385.2 & 0     & 124   & 33.29 & 0     & 427   & 129.12 \\
				Adj   & 0     & 307.45 & 80.46 & 0     & 1103  & 338.9 & 0     & 147   & 41.75 & 0     & 635   & 181.58 \\
				\hline
			\end{tabular}%
		}
		\label{tab:ex_4-5}%
	\end{table}%

	When Tables \ref{tab:ex_1-3} and \ref{tab:ex_4-5} are analyzed, UB, R, C and Adj are faster than AUU, DLSW and KTW in all examples \revv{even though they solve more scalarization problems}. \revv{Note that in Examples 1-3, AUU consistently solves the least number of scalarizations whereas, in Example 5, KTW yields the smallest SC values.} \rev{Moreover, AUU has} 
	\rev{shorter CPU time\revv{s} compared to DLSW and KTW in each example.} 
	When we compare the proposed variants, we observe that R generally provides better performance in terms of runtime for the linear problems (Example 5). Moreover, for nonlinear examples with $p=4$ (\rev{Examples 1 and 4}), UB is outperformed by \rev{R, C and Adj}. For the rest of the examples, UB, R, C and Adj have similar performances in terms of runtime. There is an exception in Example 2, where Adj is outperformed by UB, R and C with a significant difference. However, Adj still has a shorter CPU time than AUU, DLSW and KTW. 
	
	\revv{When we compare the HG values in \Cref{tab:ex_1-3}, we observe that AUU yields higher HG values than the others in most cases. The best HG values are obtained by DLSW or KTW, except in Example 1, $p=4$, in which UB yields the smallest HG.}
	
	\Cref{tab:ex_1-3} does not show the results for KTW for Example 2. \rev{Indeed, KTW} can not solve this example, \rev{whose feasible region is not compact and upper image has} an asymptotic behavior. Because of this structure \rev{of the upper image}, depending on the position of $\hat{p}$, KTW finds larger distances through the iterations even though the approximations get finer. In order to illustrate this behavior, we pick $\hat{p}$ as $(1.2,1.2)^\T$ and perform \rev{a few} iterations of KTW, see \Cref{fig:TKW-ex2}. \rev{Note that $\hat{p}$ is selected as a point that is close to the boundary of $\mathcal{P}$, for illustrative purposes. This behavior of the algorithm may be observed for any $\hat{p}$, but generally after a certain number of iterations. In our computations, we did not observe this behavior illustrated in \Cref{fig:TKW-ex2} mainly because we fix $\hat{p}$ large enough. However, since $\hat{p}$ is large, it caused numerical issues.} 
	
	\begin{figure}[!]
		\centering
		\caption{\rev{The outer approximations obtained after the first, second and third iterations of the KTW algorithm for Example 2 are shown, where $\hat{p}=(1.2,1.2)^\T$. In each figure, the selected vertex for the corresponding iteration is indicated. The minimal point to be found after the scalarization is the intersection of $\bd \P$ with the dashed line between the selected vertex and $\hat{p}$. Even though the approximation gets finer in each iteration, the distances found by the KTW algorithm are 0.7619, 0.7633 and 0.7803, respectively.}}
		\begin{minipage}[b]{0.325\linewidth}
			\centering
			\includegraphics[width=\textwidth]{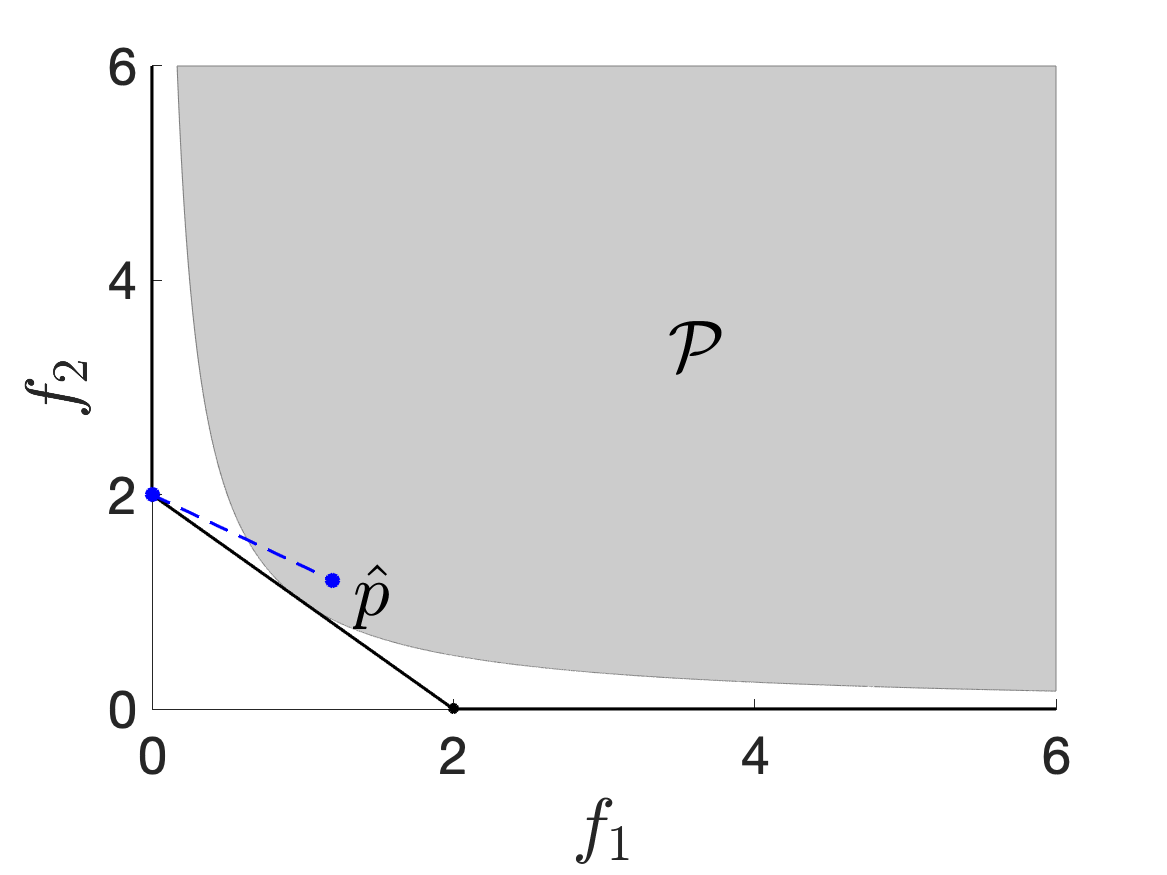}
		\end{minipage}
		\begin{minipage}[b]{0.325\linewidth}
			\centering
			\includegraphics[width=\textwidth]{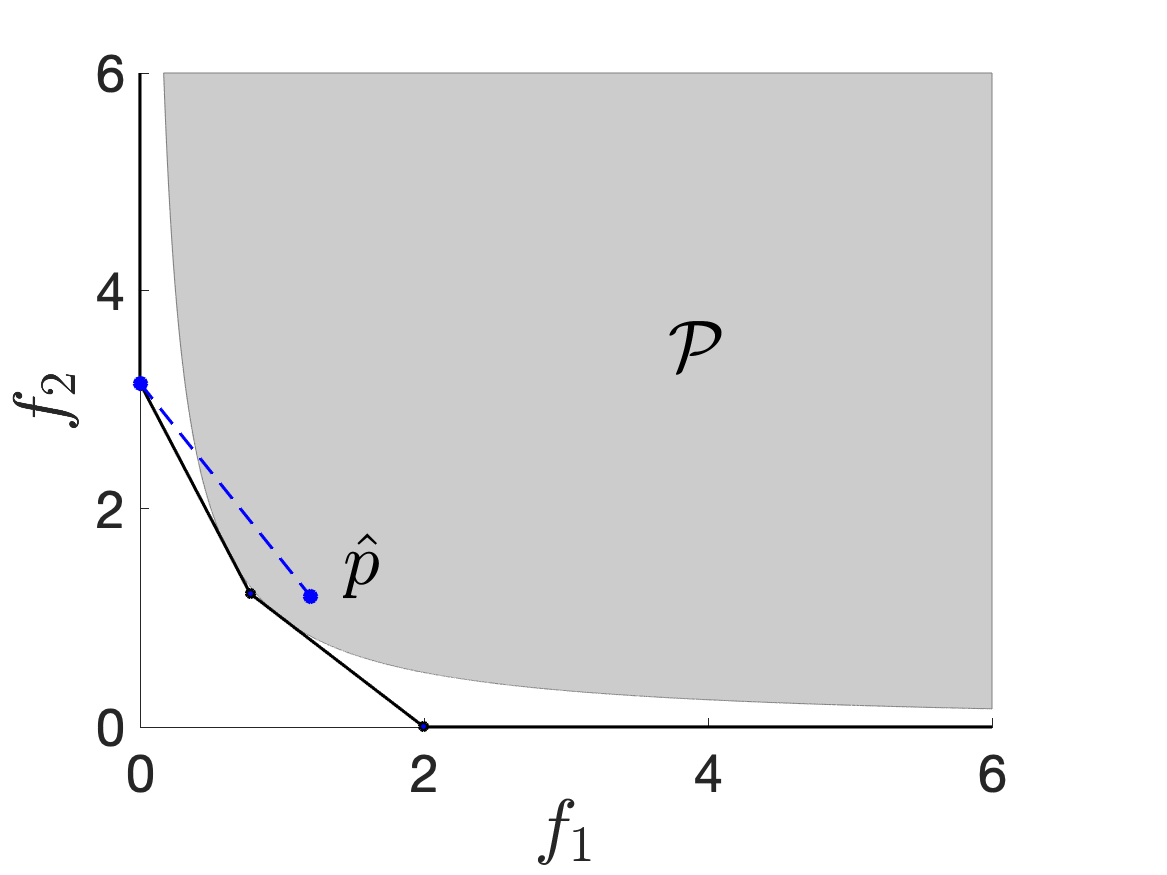}
		\end{minipage}
		\begin{minipage}[b]{0.323\linewidth}
			\centering
			\includegraphics[width=\textwidth]{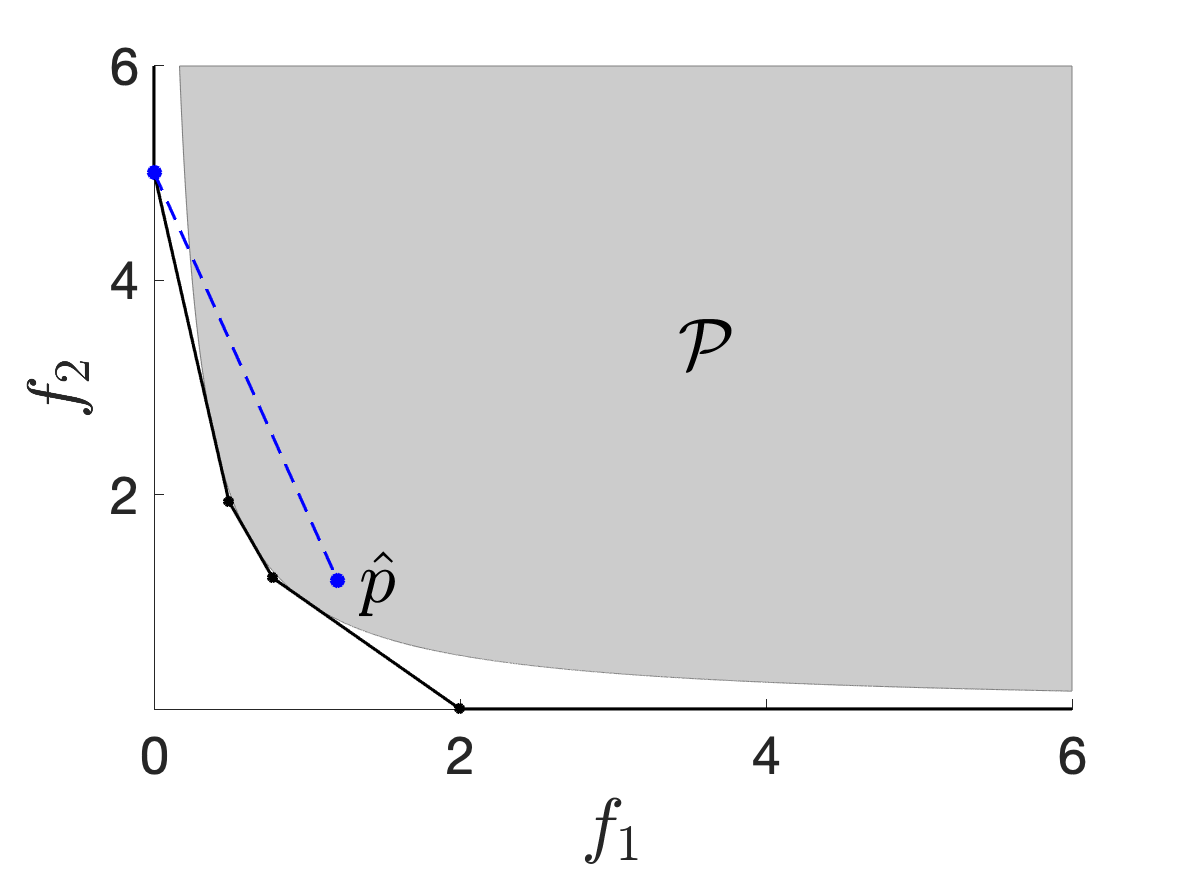}
		\end{minipage}
		\label{fig:TKW-ex2}
	\end{figure}

	\subsection{Computational results under limited runtime} \label{sect:runtime_limited}
	
\rev{In this set of experiments,} we run Examples 1-5 under \revv{a} runtime limit and \rev{report} the proximity measures that the algorithms return in \rev{\Cref{tab:run_limit}. For each example, around half of the shortest CPU time obtained in \Cref{sect:app_error} is set as the time limit.} For the variants UB, R, C and Adj, two different cardinality values are provided. The \revvv{original value is greater} (which is also equal to SC), whereas the smaller value is found by using \Cref{rem:cardinality}. 
	
	In Example 5, for $p=3$, we solve the same randomly generated instances that we \rev{solved} in \Cref{sect:app_error}. \rev{Since the} CPU times for these instances differ significantly, the most time\revv{-}consuming 8 instances are selected, and the same runtime limit of 50 seconds is fixed for them. The corresponding results in \Cref{tab:run_limit} are the averages over the 8 instances. 
	
		\begin{table}[htbp]
		\centering
		\caption{Computational results \rev{for Examples 1-5 when the algorithms run under runtime limit T.}\footnotemark}
		\resizebox{\textwidth}{!}{
			\begin{tabular}{|c|cccc|cccc|cccc|cccc|}
				\cline{2-13}    \multicolumn{1}{c|}{} & \multicolumn{4}{c|}{Example 1, $p=3$, T = 50} & \multicolumn{4}{c|}{Example 1, $p=4$, T = 75} & \multicolumn{4}{c|}{Example 2, T = 15}     &       &       &       & \multicolumn{1}{r}{} \\
				\cline{1-13}    Algorithm & VS    & Err & HG    & Card & VS    & Err & HG    & Card & VS    & Err & HG    & Card &       &       &       & \multicolumn{1}{r}{} \\
				\cline{1-13}    AUU   & 95    & 0.0168 & 0.13  & 29    & 154   & 0.0783 & -     & {28}    & 31    & 0.0296 & 32.37 & 15    &       &       &       & \multicolumn{1}{r}{} \\
				DLSW  & 139   & 0.0136 & \textbf{0.09 } & 36    & 234   & 0.0707 & 1.07  & 32    & 37    & 0.0346 & 34.54 & 15    &       &       &       & \multicolumn{1}{r}{} \\
				KTW   & 129   & 0.0183 & 0.12  & {27}    & 183   & 0.0771 & -     & 30    & - & - & - & - &       &       &       & \multicolumn{1}{r}{} \\
				UB    & 0     & 0.0057 & 0.04  & 124 (35) & 0     & 0.0348 & \textbf{0.53}  & 155 (80) & 0     & 0.0283 & \textbf{0.90}  & 44 (16) &       &       &       & \multicolumn{1}{r}{} \\
				R     & 0     & 0.0067 & 0.14  & 127.8 (44.4) & 0     & \textbf{0.0338} & 0.55  & 169.2 (98.8) & 0     & 0.0589 & 39.84 & 45 (16.2) &       &       &       & \multicolumn{1}{r}{} \\
				C     & 0     & \textbf{0.0054} & \textbf{0.09}  & 120 (32) & 0     & 0.0356 & 0.56  & 161 (95) & 0     & \textbf{0.0129} & 12.46 & 42 ({11}) &       &       &       & \multicolumn{1}{r}{} \\
				Adj   & 0     & 0.0071 & 0.13  & 124 (32) & 0     & 0.0359 & 0.66  & 166 (94) & 0     & 0.0989 & 1.79  & 46 (27) &       &       &       & \multicolumn{1}{r}{} \\ \hline
				\multicolumn{1}{r|}{} & \multicolumn{4}{c|}{Example 3, $a=5$, T = 15} & \multicolumn{4}{c|}{Example 3, $a=7$, T = 15} & \multicolumn{4}{c|}{Example 3, $a=10$, T = 15} & \multicolumn{4}{c|}{Example 3, $a=20$, T = 15} \\
				\cline{2-17}    \multicolumn{1}{c|}{} & VS    & Err & HG    & Card & VS    & Err & HG    & Card & VS    & Err & HG    & Card & VS    & Err & HG    & Card \\ \hline
				AUU   & 25    & 0.4119 & 18.37 & 10    & 28    & 0.4138 & 25.64 & 11    & 28    & 0.4033 & 38.89 & 11    & 25    & 0.6616 & 100.51 & 11 \\
				DLSW  & 42    & 0.4625 & 23.34 & 9     & 33    & 0.6168 & 19.73 & 12    & 42    & 0.4424 & -     & 12    & 35    & 0.2958 & 59.14 & 13 \\
				KTW   & 35    & 0.2274 & 8.48  & 12    & 34    & 0.1839 & 5.83  & 16    & 37    & 0.1953 & 10.32 & 16    & 36    & 0.3164 & 25.38 & 16 \\
				UB    & 0     & 0.1744 & 2.76  & 34 (14) & 0     & 0.2163 & \textbf{4.09}  & 35 ({9}) & 0     & 0.4800 & -     & 36 (9) & 0     & 0.3712 & 12.00 & 37 ({6}) \\
				R     & 0     & 0.5984 & 5.21  & 35.8 (10) & 0     & 0.2862 & 8.38  & 36.4 (9.2) & 0     & 0.8142 & 18.76 & 37 (11.2) & 0     & 0.4522 & 33.37 & 37.4 (11.8) \\
				C     & 0     & \textbf{0.1057} &\textbf{ 2.54}  & 35 ({8}) & 0     & \textbf{0.1494} & 4.99  & 36 (10) & 0     & \textbf{0.1672} & \textbf{5.38 } & 37 (\textbf{7}) & 0     & \textbf{0.1878} & 12.45 & 37 (10) \\
				Adj   & 0     & 0.2059 & 3.08  & 31 (10) & 0     & 0.4557 & 6.11  & 35 (14) & 0     & 0.1850 & 5.91  & 34 (11) & 0     & 0.5074 & \textbf{11.54} & 37 (9) \\
				\hline
				\multicolumn{1}{r|}{} & \multicolumn{4}{c|}{Example 4, $a=5$, T = 250} & \multicolumn{4}{c|}{Example 4, $a=7$, T = 250} & \multicolumn{4}{c|}{Example 4, $a=10$, T = 250} &       &       &       & \multicolumn{1}{r}{} \\ 
				\cline{2-13}    \multicolumn{1}{r|}{} & VS    & Err & HG    & Card & VS    & Err & HG    & Card & VS    & Err & HG    & Card &       &       &       & \multicolumn{1}{r}{} \\ 
				\cline{1-13}    UB    & 0     & \textbf{0.1010} & -     & 392 (233) & 0     & 0.1261 & 7.23  & 366 ({211}) & 0     & \textbf{0.1010} & -     & 382 ({214}) &       &       &       & \multicolumn{1}{r}{} \\
				R     & 0     & 0.2072 & -     & 508.5 ({164.5}) & 0     & 0.1825 & -     & 480.4 (318.4) & 0     & 0.2759 & -     & 498.6 (329) &       &       &       & \multicolumn{1}{r}{} \\
				C     & 0     & 0.1678 & -     & 452 (304) & 0     & \textbf{0.1013} & -     & 424 (276) & 0     & 0.3343 & -     & 519 (361) &       &       &       & \multicolumn{1}{r}{} \\
				Adj   & 0     & 0.2072 & -     & 537 (378) & 0     & 0.1312 & -     & 504 (349) & 0     & 0.1518 & -     & 513 (351) &       &       &       & \multicolumn{1}{r}{} \\
				\hline
				\multicolumn{1}{c|}{} & \multicolumn{4}{c|}{Example 5, $p=3$, T = 50 (avg)} & \multicolumn{4}{c|}{Example 5, $p=4$, T = 100 (ins 1)} & \multicolumn{4}{c|}{Example 5, $p=4$, T= 15 (ins 2)} & \multicolumn{4}{c|}{Example 5, $p=4$, T = 50 (ins 3)} \\
				\cline{2-17}    \multicolumn{1}{c|}{} & VS    & Err & HG    & Card & VS    & Err & HG    & Card & VS    & Err & HG    & Card & VS    & Err & HG    & Card \\ \hline
				AUU   & 69.125 & 1.3145 & 15966.61 & 22.13 & 318   & 2.7899 & 3453462.73
				& 38    & 43    & 11.3862 & 415587.08
				& {12}    & 182   & 2.9557 & 722544.84
				& 26 \\
				DLSW  & 72.375 & 2.1944 & 22167.71 & {20.25} & 338   & 3.1038 &    2776469.82
				& 33    & 41    & 5.6440 & 224514.46
				& 13    & 175   & 3.2645 & 
				803350.19
				& {21} \\
				KTW   & 98    & 0.9805 & 11919.47 & 26.25 & 403   & 4.8830 & 
				3176108.34
				& {30}    & 57    & 10.2325 & 182864.48
				& 13    & 194   & 1.5588 & 408030.61
				& 30 \\
				UB    & 0     & 1.1396 & 32596.53 & 95.38 (25.63) & 0     & \textbf{1.2209} & 
				-
				& 269 (101) & 0     & 3.0480 & -
				& 56 (20) & 0     & 1.0328 & 
				\textbf{37740.57}
				& 165 (66) \\
				R     & 0     & 2.8557 & 3564.33 & 91.75 (22.13) & 0     & 2.5621 & -     & 286.4 (82.6) & 0     & 8.6336 & 
				23599.91 & 55.4 (14.4) & 0     & 3.1134 &     42125.39
				& 163.8 (44) \\
				C     & 0     & \textbf{0.9553} & \textbf{2825.69} & 89 (21) & 0     & 1.4384 & \textbf{139495.18}
				& 266 (111) & 0     & 1.7226 & 
				\textbf{13700.71} & 55 (18) & 0     & \textbf{0.8621} & - & 150 (50) \\
				Adj   & 0     & 1.0659 & 3209.04 & 92.5 (29.63) & 0     & 1.8833 &     268168.59  & 336 (182) & 0     & \textbf{0.8027} & 19003.47 & 57 (20) & 0     & 1.1001 &  46920.04 & 180 (88) \\
				\hline
			\end{tabular}%
		}
		\label{tab:run_limit}%
	\end{table}%
	\footnotetext{$\epsilon$ is taken as in Tables \ref{tab:ex_1-3} and \ref{tab:ex_4-5} even though it is not the stopping criteria.}

	From \Cref{tab:run_limit}, we observe that the solution sets returned by the variants (when \Cref{rem:cardinality} is applied for UB, R, C and Adj) have similar cardinality values for two and three-dimensional examples. In Examples 2 and 3, C returned smaller approximation error\revv{s} compared to the other variants, in general. In Example 1 with $p=3$, proposed variants return smaller error values compared to the variants from the literature, but in Example 5 with $p=3$, there is no clear conclusion. 
	
	For four-dimensional problems, AUU, DLSW, and KTW returned solution sets with smaller cardinality values than the others. In return, the approximation errors returned by AUU, DLSW, and KTW are larger than UB, C, and Adj for these problems. On the other hand, R behaves similar\revv{ly} to AUU, DLSW, and KTW. \revv{In} most of the cases, HG values behave similar\revv{ly} to Err. However, in Example 5 for problems with $p=3$ and (ins 3) of $p=4$, the proposed variants return smaller HG, even though Err values are comparable among the algorithms.
	
	An additional computational study is conducted on Example 3 to observe the behavior of the algorithms as the time limit changes. The variants are terminated after 50, 100 and 200 seconds. In this set of experiments, we also observe the effect of $\epsilon$ used within the variants UB, C, Adj and R even if the $\epsilon$ \rev{does} not \rev{determine} the stopping condition. Since we do not select the farthest away vertex \rev{to proceed with}, $\epsilon$ has still an important role to decide if the algorithm would add a cut to the current outer approximation or not. To observe this effect, we run all the variants for the above\revv{-}mentioned time limits both with $\epsilon=0.005$ and $\epsilon=0$. {The results are provided \revv{in the appendix} and the approximation errors obtained within different time limits for $a=5$ are shown in \Cref{fig:error_time}.}\footnote{The results for other $a$ values are similar, hence the corresponding figures are not included here.}
		
	\begin{figure}[h]		
		\caption{Approximation error results under runtime limits \rev{$T\in\{50,100,200\}$} for Example 3, $a=5$ when $\epsilon$ is taken as 0 (left) and 0.005 (right).}
		\centering  
		\subfloat{
			\resizebox{0.493\textwidth}{!}{%
				
				\begin{tikzpicture}
				\begin{axis}[
				ylabel={Approximation Error},
				xmin=0, xmax=250,
				ymin=0, ymax=0.18,
				ytick={0, 0.02, 0.04, 0.06, 0.08, 0.1, 0.12, 0.14, 0.16},
				yticklabels={0, 0.02, 0.04, 0.06, 0.08, 0.1, 0.12, 0.14, 0.16},
				xtick={0,50,100,200},
				xticklabels={0, T=50,T=100,T=200},
				legend style={at={(0.5,-0.1)},anchor=north},
				ymajorgrids=true,
				grid style=dashed,
				legend columns=5, scaled y ticks = false
				]

				\addplot[ line width=1pt,mark size=2pt,
				color=green,
				mark=square*,
				]
				coordinates {
					(50,0.061)(100,0.0293)(200, 0.016)
				};
				\addlegendentry{AUU};
				
				\addplot[ line width=1pt,mark size=2pt,
				color=magenta,
				mark=triangle*,
				]
				coordinates {
					(50,0.1428)(100,0.098)(200,0.0546)
				};
				\addlegendentry{DLSW};

				\addplot[line width=1pt,mark size=2.3pt,
				color=black,
				mark=halfsquare*,
				]
				coordinates {
					(50,0.0633)(100,0.0355)(200,0.0213)
				};
				\addlegendentry{KTW};
				
				\addplot[line width=1pt,mark size=2pt,
				color=blue,
				mark=diamond*,
				]
				coordinates {
					(50,0.0925)(100,0.0448)(200,0.0448)
				};
				\addlegendentry{C};
				
				\addplot[line width=1pt,mark size=2.3pt,
				color=red,
				mark=10-pointed star,
				]
				coordinates {
					(50,0.0775)(100,0.0422)(200,0.0405)
				};
				\addlegendentry{Adj};
				
				\end{axis}
				\end{tikzpicture}
		}}
		\hfill
		\subfloat{
			\resizebox{0.48\textwidth}{!}{%
				\begin{tikzpicture}
				\begin{axis}[
				ylabel={Approximation Error},
				xmin=0, xmax=250,
				ymin=0, ymax=0.09,
				ytick={0, 0.01, 0.02, 0.03, 0.04, 0.05, 0.06, 0.07, 0.08},
				yticklabels={0, 0.01, 0.02, 0.03, 0.04, 0.05, 0.06, 0.07, 0.08},
				xtick={0,50, 100, 200},
				xticklabels={0, T=50,T=100,T=200},
				legend style={at={(0.5,-0.1)},anchor=north},
				ymajorgrids=true,
				grid style=dashed,
				legend columns=5,
				scaled y ticks = false
				]

				\addplot[line width=1pt,mark size=2pt,
				color=green,
				mark=square*,
				]
				coordinates {
					(50,0.061)(100,0.0319)(200, 0.0151)
				};
				\addlegendentry{AUU};

				\addplot[line width=1pt,mark size=2.3pt,
				color=black,
				mark=halfsquare*,
				]
				coordinates {
					(50,0.0633)(100,0.0355)(200,0.0213)
				};
				\addlegendentry{KTW};
				
				\addplot[line width=1pt,mark size=2pt,
				color=cyan,
				mark=halfcircle*,
				]
				coordinates {
					(50,0.0489)(100,0.0489)(200,0.0108)
				};
				\addlegendentry{UB};

				\addplot[line width=1pt,mark size=2pt,
				color=blue,
				mark=diamond*,
				]
				coordinates {
					(50,0.0472)(100,0.0447)(200,0.0228)
				};
				\addlegendentry{C};
				
				\addplot[line width=1pt,mark size=2.3pt,
				color=red,
				mark=10-pointed star,
				]
				coordinates {
					(50,0.0709)(100,0.0329)(200,0.0178)
				};
				\addlegendentry{Adj};
				
				\end{axis}
				\end{tikzpicture}
		}}
		\label{fig:error_time}		
	\end{figure}
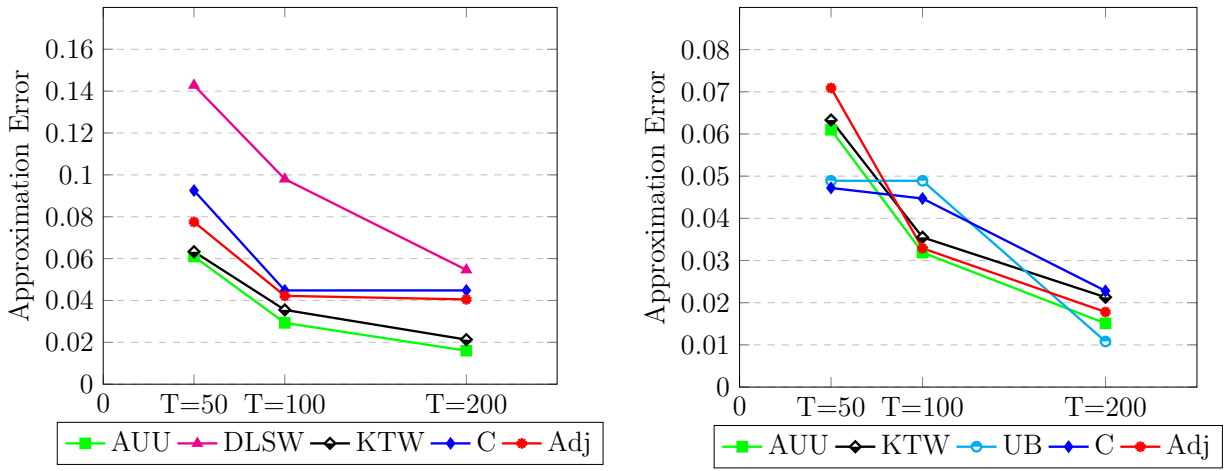
	
	As expected, the performance of AUU, KTW and DLSW are not affected by the change in $\epsilon$. On the other hand, an increase in $\epsilon$ affects the performance of UB, C and Adj positively. R is affected less compared to UB, C and Adj, while UB is affected significantly. Note that the two variant\revv{s} that give the worst approximation error in each setting (UB and R for $\epsilon =0$ and R and DLSW for $\epsilon = 0.005$) are not plotted \revv{to increase visibility.}


	\subsection{Computational results under fixed cardinality} \label{sect:Fixed Cardinality}
	
	In this section, we compare the performances of the algorithms when they run until finding a solution set with a predetermined cardinality. 
	For each example, we give a cardinality limit so that the algorithms terminate before reaching the approximation error that is set for the experiments in \Cref{sect:app_error}. In Example 5 where $p=3$, the same 8 instances are selected as in \Cref{sect:runtime_limited}. For the variants UB, R, C, and Adj, we apply \Cref{rem:cardinality}. \rev{The results can be seen in \Cref{tab:fixedcard}.}
	
		\begin{table}[h]
		\centering
		\caption{Computational results \rev{for Examples 1-5 when the algorithms run until finding a solution set with predetermined cardinality (Card).}}
		\resizebox{\textwidth}{!}{
			\begin{tabular}{|c|cccc|cccc|cccc|cccc|}
				
				\cline{2-13}    \multicolumn{1}{c|}{} & \multicolumn{4}{c|}{Example 1, $p=3$, Card $= 100$} & \multicolumn{4}{c|}{Example 1, $p=4$, Card $= 50$} & \multicolumn{4}{c|}{Example 2, Card $=20$ } &       &       &       & \multicolumn{1}{c}{} \\
				\cline{1-13}    \multicolumn{1}{|c|}{Algorithm}  & VS    & T  & Err & HG    & VS    & T  & Err & HG    & VS    & T  & Err & HG    &       &       &       & \multicolumn{1}{c}{} \\ \cline{1-13} 
				AUU   & 451   & 252.12 & \textbf{0.0062} & 0.03  & 401   & 197.53 & \textbf{0.0578} & 0.51  & 64    & 29.15 & 0.0197 & 14.98 &       &       &       & \multicolumn{1}{c}{} \\
				DLSW  & 455   & 175.98 & 0.0065 & 0.03  & 430   & 144.85 & 0.0579 & 0.52  & 90    & 34.97 & 0.0188 & 16.06 &       &       &       & \multicolumn{1}{c}{} \\
				KTW   & 695   & 266.51 & 0.0067 & 0.03  & 816   & 327.38 & 0.0614 & -     & -     & -     & -     & -     &       &       &       & \multicolumn{1}{c}{} \\
				UB    & 0     & 93.74 & 0.0196 & 0.03  & 0     & 62.20 & 0.1448 & -     & 0     & 15.83 & 0.0283 & \textbf{11.77} &       &       &       & \multicolumn{1}{c}{} \\
				R     & 0     & \textbf{51.98} & 0.0325 & 0.07  & 0     & \textbf{48.24} & 0.4142 & 1.13  & 0     & 16.81 & 0.2236 & 32.63 &       &       &       & \multicolumn{1}{c}{} \\
				C     & 0     & 90.66 & 0.0195 & 0.07  & 0     & 54.12 & 0.0894 & \textbf{0.49}  & 0     & 21.45 & \textbf{0.0129} & 81.53 &       &       &       & \multicolumn{1}{c}{} \\
				Adj   & 0     & 84.96 & 0.0116 & 0.04  & 0     & 48.75 & 0.1281 & 1.02  & 0     & \textbf{11.72} & 0.0989 & -     &       &       &       & \multicolumn{1}{c}{} \\ \hline
				\multicolumn{1}{c|}{} & \multicolumn{4}{c|}{Example 3, $a = 5$, Card $= 30$} & \multicolumn{4}{c|}{Example 3, $a = 7$, Card $= 30$} & \multicolumn{4}{c|}{Example 3, $a = 10$, Card $= 30$} & \multicolumn{4}{c|}{Example 3, $a = 20$, Card $= 30$} \\ \cline{2-17}
				\multicolumn{1}{c|}{} & VS    & T  & Err & HG    & VS    & T  & Err & HG    & VS    & T  & Err & HG    & VS    & T  & Err &  HG \\ \hline
				AUU   & 98    & 50.51 & 0.0789 & 3.88  & 100   & 50.48 & 0.0789 & 4.32  & 104   & 52.06 & 0.0874 & 8.37  & 123   & 59.48 & \textbf{0.0877} & 15.77 \\
				DLSW  & 581   & 174.80 & 0.0979 & 4.22  & 106   & 41.22 & 0.1112 & 4.86  & 505   & 150.75 & 0.1122 & 7.70  & 104   & 41.40 & 0.1264 & 15.61 \\
				KTW   & 106   & 106.15 & \textbf{0.0543} & \textbf{1.96}  & 102   & 78.82 & \textbf{0.0682} & \textbf{2.83}  & 97    & 71.16 & \textbf{0.0639} & \textbf{4.07}  & 116   & 76.91 & 0.0945 & 10.21 \\
				UB    & 0     & 25.14 & 0.0629 & 2.94  & 0     & 25.78 & 0.2163 & 6.36  & 0     & 26.66 & 0.1056 & 6.43  & 0     & 26.85 & 0.1256 & \textbf{7.29} \\
				R     & 0     & 25.27 & 0.1752 & 6.88  & 0     & 24.84 & 0.1677 & 8.97  & 0     & 25.59 & 0.2086 & 13.38 & 0     & 26.26 & 0.3129 & 15.93 \\
				C     & 0     & 24.32 & 0.1006 & 3.63  & 0     & 25.87 & 0.1184 & 5.71  & 0     & 26.10 & 0.1105 & 6.78  & 0     & 25.18 & 0.1827 & 7.98 \\
				Adj   & 0     & \textbf{23.34} & 0.1020 & 4.39  & 0     & \textbf{21.31} & 0.4557 & 7.82  & 0     & \textbf{24.69} & 0.1535 & 11.55 & 0     & \textbf{24.97} & 0.5069 & 7.84 \\
				\hline
				\multicolumn{1}{c|}{} & \multicolumn{4}{c|}{Example 4, $a = 5$, Card $= 100$} & \multicolumn{4}{c|}{Example 4, $a = 7$,  Card $= 100$} & \multicolumn{4}{c|}{Example 4,  $a = 10$, Card $= 100$} &       &       &       & \multicolumn{1}{c}{} \\ \cline{2-13}
				\multicolumn{1}{c|}{} & VS    & T  & Err & HG    & VS    & T  & Err & HG    & VS    & T  & Err & HG    &       &       &       & \multicolumn{1}{c}{} \\ \cline{1-13}  
				UB    & 0     & 137.56 & 0.1221 & -     & 0     & 131.28 & \textbf{0.1261} & -     & 0     & 145.59 & \textbf{0.2072} & -     &       &       &       & \multicolumn{1}{c}{} \\
				R     & 0     & 112.89 & 0.4142 & 16.22 & 0     & 113.22 & 0.3092 & -     & 0     & 115.72 & 0.4101 & -     &       &       &       & \multicolumn{1}{c}{} \\
				C     & 0     & 142.94 & \textbf{0.1039} & 9.28  & 0     & 148.59 & 0.1530 & 17.54 & 0     & 129.40 & 0.3343 & -     &       &       &       & \multicolumn{1}{c}{} \\
				Adj   & 0     & \textbf{100.93} & 0.175 & \textbf{7.48}  & 0     & \textbf{99.75} & 0.2072 & -     & 0     & \textbf{113.33} & 0.2226 & -     &       &       &       & \multicolumn{1}{c}{} \\ \hline
				\multicolumn{1}{c}{} & \multicolumn{4}{|c}{Example 5, $p = 3$, Card $= 50$ (avg)} & \multicolumn{4}{|c}{Example 5, $p =4$, Card $= 125$ (ins 1)} & \multicolumn{4}{|c|}{Example 5, $p =4$, Card $= 40$ (ins 2)} & \multicolumn{4}{c|}{Example 5, $p =4$, Card $= 75$ (ins 3)} \\    \cline{2-17}
				\multicolumn{1}{c|}{} & VS    & T  & Err & HG    & VS    & T  & Err & HG    & VS    & T  & Err & HG    & VS    & T  & Err & HG \\ \hline
				AUU   & 204   & 72.53 & \textbf{0.2502} & 5573.2 & 2533  & 768.70 & \textbf{0.229} & 606958.2
				& 266   & 87.09 & 0.4871 & 30354.2
				& 1207  & 386.52 & 0.4436 & 121479.4
				\\
				DLSW  & 233   & 81.98 & 0.4715 & 3567.4 & 3042  & 904.62 & 0.348 & 
				163459.1
				& 298   & 95.81 & 0.3892 & 12517.2
				& 2027  & 363.61 & 0.5809 & 
				79236.2 \\
				KTW   & 347.75 & 94.26 & 0.2564 & 4484.0 & 6457  & 1641.78 & 0.256 & 
				605558.1
				& 609   & 156.71 & 0.2523 & 24715.2
				& 1167  & 534.57 & \textbf{0.3119} & 				
				138802.3
				\\
				UB    & 0     & 41.84 & 0.3672 & \textbf{1025.2} & 0     & 110.63 & 1.221 & 
				163636.9
				& 0     & 25.16 & 0.4871 & 4128.8
				& 0     & 58.71 & 1.0328 & 
				27890.2
				\\
				R     & 0     & 43.31 & 1.0093 & 1325.7 & 0     & 127.41 & 1.206 & 
				\textbf{40282.1}
				& 0     & 26.85 & 0.3716 &     1112.3 & 0     & 65.57 & 0.9916 & 
				\textbf{10856.6}
				\\
				C     & 0     & 46.17 & 0.4707 & 1051.4 & 0     & 106.68 & 0.450 & 
				118499.2
				& 0     & 26.84 & 0.2489 & \textbf{876.6} & 0     & 63.51 & 0.8274 & 
				89151.3
				\\
				Adj   & 0     & \textbf{36.08} & 0.6722 & 1860.9 & 0     & \textbf{79.83} & 1.883 & 
				331493.0 & 0     & \textbf{24.36} & \textbf{0.1966} & 1636.87 & 0     & \textbf{47.58} & 1.1001 & 
				54859.5 \\  \hline
			\end{tabular}%
		}
		\label{tab:fixedcard}%
	\end{table}%

	In line with the observations from \Cref{sect:app_error}, we observe from \Cref{tab:fixedcard} that UB, R, C, and Adj require less CPU time compared to AUU, DLSW, and KTW. Among the algorithms from \Cref{sect:alg_literature}, DLSW requires less CPU time compared to AUU and KTW, especially if the dimension of the objective space is high. When we compare the proposed variants, we see that Adj is slightly faster than the others in most examples. 
	
	\rev{The} approximation errors \rev{obtained} by AUU, DLSW, and KTW are smaller than or very close to those \rev{obtained} by UB, R, C, and Adj, in general. This shows the trade-off between the runtime and approximation error. When we compare the algorithms from Section \ref{sect:alg_literature}, AUU and KTW yield better results in terms of the approximation error compared to DLSW, in general. On the other hand, the variants UB, R, C, and Adj are comparable \rev{among themselves}. When we consider the HG values, we observe a similar behavior as Err, especially for problems with $p\in\{2,3\}$. However, in Example 5 the proposed variants return smaller HG values even though the Err values are slightly worse than or comparable with the algorithms from the literature. 
	

	\section{Conclusion}\label{ch:conclusion}
	
	We \rev{consider bounded convex vector optimization problems and} present a general framework \rev{for existing outer approximation algorithms from the literature. We observe that many algorithms iterate by solving Pascoletti-Serafini scalarizations or equivalent models and they mainly differ in selecting the parameters of the scalarization in each iteration.} We propose \rev{additional} methods to select \rev{the reference point (a vertex of the current outer approximation) and direction parameter} of this scalarization. 
	
 	\rev{First, for a given vertex, we propose two methods to select the direction parameter. These methods are based on the position of the selected vertex as well as (1) the ideal point and (2) the adjacent vertices of the selected vertex, respectively.  We} compare \rev{the proposed} direction selection rules \rev{together with the ones commonly used in the literature through preliminary computational tests} and observe that \rev{using the positions of the adjacent vertices yields} promising results. \revv{Hence, if the vertex enumeration problems are solved based on the double description method (e.g. by bensolve) so that the adjacency information is already obtained, then using Adj to select the direction parameter for each vertex is preferable. }
 	
 	We also propose some vertex selection rules which, different \revv{from} the existing \rev{ones} \revv{in} the literature, do not require solving additional optimization problems. \rev{Instead, the proposed selection methods use the positions of all vertices (1) to form clusters and select accordingly, (2) to pick the most isolated vertex, or (3) to find the farthest away vertex from its corresponding local upper bound, respectively.}
	
	We implement the proposed \revv{vertex selection procedures together with the direction selection rule Adj} and three relevant approaches from the literature\revv{. We provide an extensive computational study and} observe that \rev{especially when the ordering cone is the positive orthant or a larger cone, the proposed variants} perform better in terms of CPU time if the stopping condition is the approximation error or the cardinality \rev{of the solution set. On the other hand, the CPU times are comparable when the ordering cone is a strict subset of the positive orthant}.\footnote{\revvv{See the appendix for the results with $C \neq \R^p_+$.}} 
	
	\rev{When the algorithms are run under a time limit, the proximity measures returned by the proposed variants are either better than or comparable to the ones returned by the algorithms from the literature.} 
	We also observe that the selection of $\epsilon$ affects the performance of the proposed variants even when $\epsilon$ is not used as the stopping criterion.
		
	\revv{We conclude that if a reasonably good direction selection method (e.g. Adj) is used in the algorithm, then the vertex selection rule may have less impact on the performance of the algorithm. Hence, solving additional single objective optimization problems to select a vertex may not be necessary to improve the performance of the algorithm.}
	
	\revv{When we compare the overall performances of the proposed vertex selection methods, there is no clear conclusion. Indeed, random selection works very well in many cases, especially if the stopping condition is the approximation error. However, there are also some cases for which R works considerably worse than the proposed variants{, see for instance the results for Examples 3 and 5 when the algorithms run until a predetermined runtime or cardinality limit (Tables \ref{tab:run_limit}, \ref{tab:fixedcard})}. In that sense, we conclude that R may not be as robust as the proposed variants, as expected. As a final remark, note that if the ordering cone is the positive orthant, then using UB has an advantage as it also returns an upper bound for the current approximation error when it is terminated after a predetermined runtime or cardinality limit.}
	

	\section*{Acknowledgments}
	
	The authors thank the anonymous referees for insightful comments that allowed them to correct some inaccuracies appearing in the preceding version and for numerous suggestions that improved the presentation. They also thank Daniel D\"orfler for his suggestions to improve the computational results.	
		
	\bibliographystyle{plain}	
	\bibliography{references}
	
	\appendix

	\section{Testing different direction selection rules for UB} ~\\
Even though we fix Adj as the direction selection rule for further computational study in \revv{Section 5.2}, we note that when UB is used as the vertex selection rule, it is possible to use the local upper bounds in order to determine the direction parameter \rev{as well}. In particular, for a vertex $v$ with the corresponding local upper bound $u$, we consider selecting the direction parameter as the vector from $v$ to $u$. In order to test this direction selection rule we conduct a computational study on Example 3 in which we compare three direction selection rules: $d=\frac{e}{\norm{e}}$ (UB-FD), $d=\frac{(u-v)}{\norm{u-v}}$ (UB-UB), and adjacent vertices approach (UB-Adj). We observe in \Cref{table:pre_ub} that UB-Adj solves less number of scalarizations while UB-UB clearly has a negative effect on T and SC. Based on this preliminary analysis, we \rev{confirm the usage of} Adj as the direction selection rule throughout the computational studies in \revv{Section 6}.

\begin{table}[h]
	\centering
	\caption{The effect of different direction selection methods for UB for Ex 3 with $\epsilon=0.05$}
	\resizebox{\textwidth}{!}{
		\begin{tabular}{|c|ccc|ccc|ccc|ccc|} 
			\cline{2-13} \multicolumn{1}{c}{} 
			& \multicolumn{3}{|c|}{$a=5$} & \multicolumn{3}{c|}{$a=7$} & \multicolumn{3}{c|}{$a=10$} &\multicolumn{3}{c|}{$a=20$}  \\ 
			\multicolumn{1}{c|}{}  & UB-FD & UB-UB & UB-Adj & UB-FD & UB-UB & UB-Adj & UB-FD & UB-UB & UB-Adj & UB-FD & UB-UB & UB-Adj\\
			\hline
			SC & 128  & 547 & \textbf{110 }& 127 & 570 & \textbf{112} & 139 & 737 &\textbf{ 90} & 194 & 1990 & \textbf{127} \\ 
			T & 38.83  & 173.58  & \textbf{32.51} & 36.81 & 178.73 & \textbf{33.36 }& 41.20 & 240.06 & \textbf{26.93} & 62.16 & 996.51 & \textbf{37.44} \\ 
			\hline
	\end{tabular}}\label{table:pre_ub}
\end{table}

\section{Vertex selection procedures of the existing algorithms from Section 4.4}
\begin{align} \label{eq:GB2}
\textrm{maximize } \lambda \:\: \textrm{subject to }  f(x) \leq \hat{p}+\lambda(v-\hat{p}), \: x \in \X, \: \lambda \in \R.
\end{align}	
\begin{procedure}[H]
	\caption{SelectVertex($V^k$,$\Vused$,$\Vinfo$) of Algorithm KTW}
	\begin{algorithmic}[1] 
		\STATE $ V_{\textnormal{info}}^{\textnormal{temp}} =\emptyset$
		\FORALL {$v \in V^k \setminus \Vused$}	
		\IF{$(v,y^v, z^v) \notin \Vinfo$}
		\STATE Let ($x^v,\lambda^v$) be a solution to  \eqref{eq:GB2}, $y^v:=\hat{p}+\lambda^v(v-\hat{p})$ and $z^v:=\norm{y^v-v}$;
		\ENDIF
		\STATE $V_{\textnormal{info}}^{\textnormal{temp}} \gets V_{\textnormal{info}}^{\textnormal{temp}}  \cup \{(v,y^v,z^v)\}$;
		\ENDFOR
		\STATE $\Vinfo \gets V_{\textnormal{info}}^{\textnormal{temp}}  $;
		\STATE Let $v^* \in\argmax_{(v,y^v,z^v) \in \mathcal{V}^{k}} z^v$;
		\RETURN $v^*$,  $\Vinfo$
	\end{algorithmic}\label{proc:KTW}
\end{procedure}

\begin{align} \label{eq:QP}
\textrm{minimize } \norm{y-v}^2 \:\: \textrm{subject to } y \in \mathcal{I}^k.
\end{align}	
\begin{procedure}[H]
	\caption{SelectVertex($x^k$,$V^k$,$\Vused$,$\Vinfo$) of Algorithm DLSW}
	\begin{algorithmic}[1] 
		\STATE $ V_{\textnormal{info}}^{\textnormal{temp}} =\emptyset$
		\FORALL {$v \in V^k \setminus \Vused$}	
		\IF{ $(v,y^v,z^v)\in \Vinfo$}
		\IF{$(y^v-v)^\T (f(x^{k})-y^v) < 0$}
		\STATE Solve \eqref{eq:QP} and for the vertex $v$ find  $y^v \in \mathcal{I}^k$;
		\ENDIF
		\ELSE
		\STATE Solve \eqref{eq:QP} and for the vertex $v$ find  $y^v \in \mathcal{I}^k$;
		\ENDIF
		\STATE $V_{\textnormal{info}}^{\textnormal{temp}}  \gets V_{\textnormal{info}}^{\textnormal{temp}} \cup \{(v,y^v,z^v)\}$;
		\ENDFOR
		\STATE $\Vinfo \gets V_{\textnormal{info}}^{\textnormal{temp}} $;
		\STATE Let $v^* \in\argmax_{(v,y^v,z^v) \in \mathcal{V}^{k}} z^v$;
		\RETURN $v^*$, $\Vinfo$
	\end{algorithmic} \label{proc:DLSW}
\end{procedure}
\begin{align} \label{eq:NM}
\textrm{minimize } \norm{z} \:\: \textrm{subject to } f(x) \leq_C v+z, \:  x\in \X, z\in \R^p.
\end{align}	
\begin{procedure}[H]
	\caption{SelectVertex($V^k$,$\Vused$,$\Vinfo$) of Algorithm AUU}
	\begin{algorithmic}[1] 
		\STATE $ V_{\textnormal{info}}^{\textnormal{temp}} =\emptyset$;
		\FORALL {$v \in V^k \setminus \Vused$}	
		\IF{$(v,y^v, z^v) \notin \Vinfo$}
		\STATE Let \rev{$z^v$ be a solution to \eqref{eq:NM}};
		\ENDIF
		\STATE $V_{\textnormal{info}}^{\textnormal{temp}} \gets V_{\textnormal{info}}^{\textnormal{temp}} \cup \{(v,y^v,z^v)\}$;
		\ENDFOR
		\STATE $\Vinfo \gets V_{\textnormal{info}}^{\textnormal{temp}} $;
		\STATE Let $v^* \in\argmax_{(v,y^v,z^v) \in \mathcal{V}^{k}} z^v$;
		\RETURN $v^*$, $\Vinfo$
	\end{algorithmic}\label{proc:AUU}
\end{procedure}

\section{An example with different ordering cones}~\\ 
\rev{Consider Example 1 for $p\in \{2,3\}$ with the following ordering cones:\\
	$C_1=\cone \conv \{(1, 2)^\T, (2, 1)^\T\} \subsetneq \R^2_+, \\
	C_2=C_1^+=\cone \conv \{(2, -1)^\T, (-1, 2)^\T\} \supsetneq \R^2_+,\\
	C_3 = \cone \conv \{(-1, -1, 3)^\mathsf{T}, (2, 2, -1)^\mathsf{T}, (1, 0, 0)^\mathsf{T}, (0, -1, 2)^\mathsf{T}, (-1, 0, 2)^\mathsf{T}, (0, 1, 0)^\mathsf{T}\} \supsetneq \R^3_+,\\
	C_4=C_3^+=\cone \conv \{(4, 2, 2)^\mathsf{T}, (2, 4, 2)^\mathsf{T}, (4, 0, 2)^\mathsf{T}, (1, 0, 2)^\mathsf{T}, (0, 1, 2)^\mathsf{T}, (0, 4, 2)^\mathsf{T}\} \subsetneq \R^3_+$.\\
	Figure \ref{fig:vectorex} provides illustrations of the outer approximations of the upper image for $p=3$ with ordering cones $C_3$ and $C_4$.}


\begin{figure}[h]
	\centering
	\caption{\rev{The outer approximations of the upper image of Example 1 for $p=3$ with cones $C_3$ (left) and $C_4$ (right) obtained by Algorithm Adj, where the stopping condition is taken as $\epsilon = 0.01$.}}
	\begin{minipage}[b]{0.38\linewidth}
		\centering
		\includegraphics[width=\textwidth]{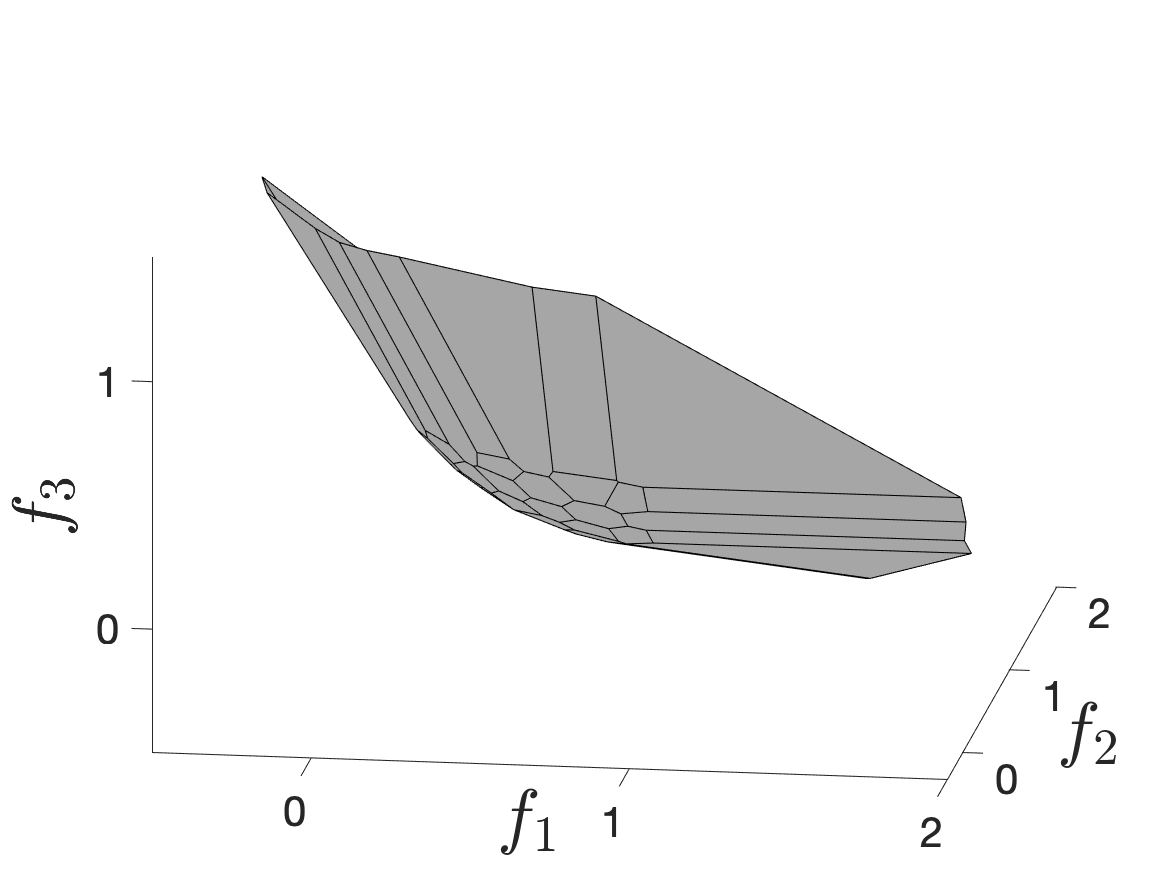}
	\end{minipage}
	\begin{minipage}[b]{0.38\linewidth}
		\centering
		\includegraphics[width=\textwidth]{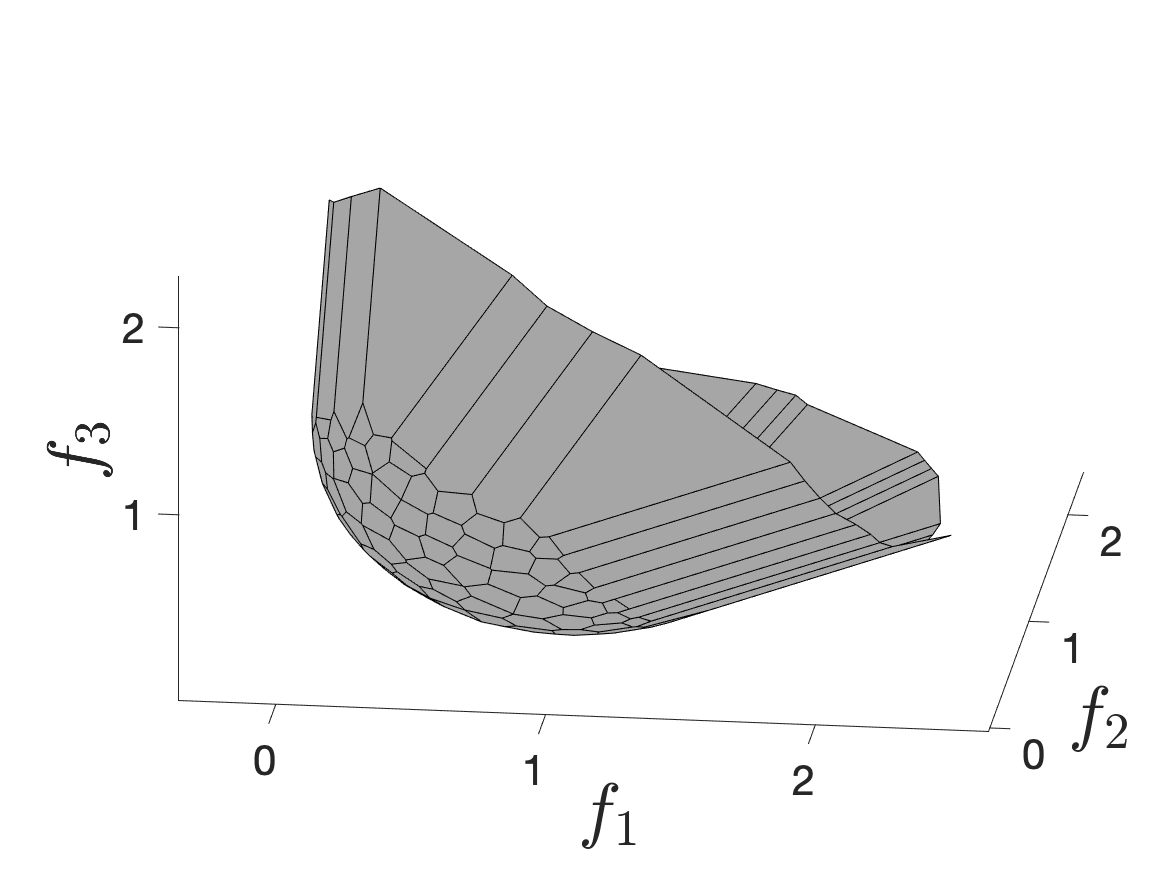}
	\end{minipage}
	\label{fig:vectorex}
\end{figure}

\rev{Since algorithms KTW and UB are designed to solve multiobjective optimization problems, they are not included in this set of experiments. Table \ref{tab:vectorex} shows the performances of AUU, DLSW, R, C, and Adj under different stopping criteria. Different from the \revv{main} set of experiments, the bounding set $\mathcal{Q}$ considered for HG calculations is computed exactly as in \cite[Section 6.1]{Simay}. Also, these experiments are conducted using a different computer with specifications: Intel(R) Core(TM) i7-8565U CPU @ 1.80GHz.} 


\begin{table}[H]
	\centering
	\caption{\rev{Computational results for Example 1 for $p=2$ with ordering cones $C_1$ and $C_2$, and for $p=3$ with ordering cones $C_3$ and $C_4$. The first set (fixed $\epsilon$) shows the results when the algorithms run until returning a finite weak $\epsilon$-solution for provided $\epsilon$ values; the second set (fixed T) shows the results when the algorithms run under runtime limit T, and the third set shows the results when the algorithms run until finding a solution set with predetermined cardinality (Card).}}
	\resizebox{\textwidth}{!}{ \begin{tabular}{c|c|cccc|cccc|cccc|cccc|}
			\cline{3-18}    \multicolumn{1}{r}{} &       & \multicolumn{4}{c|}{$p=2,\ C_1$}  & \multicolumn{4}{c|}{$p=2,\ C_2$}  & \multicolumn{4}{c|}{$p=3,\  C_3$}  & \multicolumn{4}{c|}{$p=3,\ C_4$} \\
			\cline{3-18}    \multicolumn{1}{r}{} &       & \multicolumn{4}{c|}{$\epsilon=0.0005$} & \multicolumn{4}{c|}{$\epsilon=0.0005$} & \multicolumn{4}{c|}{$\epsilon= 0.01$} & \multicolumn{4}{c|}{$\epsilon= 0.01$} \\
			\cline{2-18}          & Algorithm & VS    & SC    & T     & HG    & VS    & SC    & T     & HG    & VS    & SC    & T     & HG    & VS    & SC    & T     & HG \\
			\cline{2-18}    \multirow{5}[2]{*}{\begin{sideways} fixed $\epsilon$ \end{sideways}} & AUU   & 87    & \textbf{45}    & 25.93 & 0.0109 & 38    & \textbf{17}    & 10.72 & 0.00023 & 145   & \textbf{43}    & 46.24 & 0.00007 & 378   & \textbf{81}    & \textbf{105.58} & 0.2434 \\
			& DLSW  & 104   & \textbf{45}    & \textbf{23}    & 0.0054 & 37    & \textbf{17}    & 8.48  & \textbf{0.00001} & 310   & 75    & 65.96 & \textbf{0.00002} & 1109  & 148   & 196.43 & 0.0949 \\
			& R     & 0     & 129.2 & 24.46 & \textbf{0.0003} & 0     & 33.4  & 6.20  & 0.00004 & 0     & 106.6 & \textbf{23.32} & 0.00005 & 0     & 509   & 110.49 & 0.0225 \\
			& C     & 0     & 131   & 24.07 & \textbf{0.0003} & 0     & 33    & \textbf{5.69}  & 0.00006 & 0     & 114   & 28.25 & 0.00004 & 0     & 483   & 112.72 & 0.0271 \\
			& Adj   & 0     & 129   & 24.39 & \textbf{0.0003} & 0     & 33    & 6.04  & 0.00006 & 0     & 115   & 27.61 & 0.00003 & 0     & 601   & 137.42 & \textbf{0.0202} \\
			\cline{2-18}    \multicolumn{1}{r}{} &       & \multicolumn{4}{c|}{T=15}     & \multicolumn{4}{c|}{T=5}      & \multicolumn{4}{c|}{T=15}     & \multicolumn{4}{c|}{T=15} \\
			\cline{3-18}    \multicolumn{1}{r}{} &       & VS    & Card  & Err   & HG    & VS    & Card  & Err   & HG    & VS    & Card  & Err   & HG    & VS    & Card  & Err   & HG \\
			\cline{2-18}    \multirow{5}[2]{*}{\begin{sideways}fixed T\end{sideways}} & AUU   & 53    & \textbf{28}    & 0.0008 & 0.0116 & 19    & \textbf{11}    & 0.0008 & 0.00008 & 45    & \textbf{13}    & 0.0289 & 0.0012 & 166   & \textbf{36 }   & 0.0256 & - \\
			& DLSW  & 81    & 38    & 0.0008 & 0.0057 & 25    & 14    & 0.0008 & 0.00004 & 79    & 18    & 0.0165 & 0.0007 & 293   & 54    & \textbf{0.0205} & 0.0006 \\
			& R     & 0     & 56.2  & 0.0015 & 0.0013 & 0     & 16    & \textbf{0.0002} & \textbf{0.00001} & 0     & 31.2  & 0.0150 & \textbf{0.0003} & 0     & 126.2 & 0.0265 &\textbf{ 0.0001} \\
			& C     & 0     & 61    & 0.0015 & 0.0005 & 0     & 15    & \textbf{0.0002} & \textbf{0.00001} & 0     & 23    & 0.0162 & \textbf{0.0003} & 0     & 120   & 0.0226 & \textbf{0.0001} \\
			& Adj   & 0     & 62    & \textbf{0.0004} & \textbf{0.0003} & 0     & 16    & \textbf{0.0002} & \textbf{0.00001} & 0     & 35    & \textbf{0.0147} &\textbf{ 0.0003} & 0     & 129   & 0.0232 & 0.0003 \\
			\cline{2-18}    \multicolumn{1}{r}{} &       & \multicolumn{4}{c|}{Card=30}  & \multicolumn{4}{c|}{Card=10}  & \multicolumn{4}{c|}{Card=20}  & \multicolumn{4}{c|}{Card=40} \\
			\cline{3-18}    \multicolumn{1}{r}{} &       & VS    & T     & Err   & HG    & VS    & T     & Err   & HG    & VS    & T     & Err   & HG    & VS    & T     & Err   & HG \\
			\cline{2-18}    \multirow{5}[2]{*}{\begin{sideways}fixed Card \end{sideways}} & AUU   & 55    & 14.34 & \textbf{0.0004} & 0.0116 & 15    & 4.74  & 0.0008 & 0.00008 & 55    & 17.05 & 0.0437 & 0.0016 & 161   & 51.04 & 0.0262 & - \\
			& DLSW  & 67    & 14.43 & 0.0008 & 0.0060 & 19    & 5.05  & 0.0008 & 0.00005 & 90    & 26.81 & \textbf{0.0147} & 0.0008 & 203   & 45.06 & \textbf{0.0207} & 0.0006 \\
			& R     & 0     & \textbf{11.62} & 0.0015 & 0.0034 & 0     & 4.20  & \textbf{0.0002} & \textbf{0.00001} & 0     & \textbf{10.31} & 0.0598 & 0.0006 & 0     & 33.66 & 0.0609 & \textbf{0.0001} \\
			& C     & 0     & 13.63 & 0.0015 & 0.0005 & 0     & 5.03  &\textbf{ 0.0002} & \textbf{0.00001} & 0     & 11.47 & 0.0530 & \textbf{0.0004} & 0     & 46.07 & 0.0255 & \textbf{0.0001} \\
			& Adj   & 0     & 13.12 & \textbf{0.0004} & \textbf{0.0004} & 0     & \textbf{4.01}  & \textbf{0.0002} & \textbf{0.00001} & 0     & 11.78 & 0.0240 & 0.0005 & 0     & \textbf{33.08} & 0.0493 & - \\
			\cline{2-18}    \end{tabular}}%
	\label{tab:vectorex}%
\end{table}

\rev{From Table \ref{tab:vectorex}, we observe that if the algorithms run until they return a finite weak $\epsilon$-solution (fixed $\epsilon$), then similar to the observations from \revv{Section 6}, the proposed variants are faster than the algorithms from the literature for the ordering cones larger than the positive orthant ($C_2, C_3$). However, there is no clear conclusion for the ordering cones smaller than the positive orthant.}

\rev{Under fixed runtime (fixed T), we observe that AUU and DLSW return solution sets with smaller cardinality. The algorithms are comparable in terms of the Err that they return. Moreover, AUU consistently yields the worst HG value and R, C and Adj return slightly better HG values than DLSW.} 

\rev{Finally, when the algorithms run until they find a predetermined number of solutions (fixed Card), the proposed variants return smaller HG values compared to AUU and DLSW. However, T and Err values are comparable for all algorithms.}

\section{Results for different runtime limits for Example 3}

\begin{table}[!]
	\centering
	\caption{\rev{Computational results for Example 3 when the algorithms run under different runtime limits T$\in \{50, 100, 200\}$. }}
	\resizebox{\textwidth}{!}{
		\begin{tabular}{|c|l|cc|cc|cc|cc|cc|cc|}
			\cline{3-14}    \multicolumn{1}{c}{} & \multicolumn{1}{c|}{} & \multicolumn{6}{c|}{Example 3, $a=5$}                & \multicolumn{6}{c|}{Example 3, $a=7$}             \\ 
			\cline{3-14}    \multicolumn{1}{c}{} & \multicolumn{1}{c|}{} & \multicolumn{2}{c}{Err} & \multicolumn{2}{c}{HG} & \multicolumn{2}{c|}{Card} & \multicolumn{2}{c}{Err} & \multicolumn{2}{c}{HG} & \multicolumn{2}{c|}{Card} \\
			\cline{3-14}    \multicolumn{1}{c}{} & \multicolumn{1}{c|}{} & $\epsilon =0$     & $\epsilon =0.005$ & $\epsilon =0$    & $\epsilon =0.005$ & $\epsilon =0$     & \multicolumn{1}{c|}{$\epsilon =0.005$ } & $\epsilon =0$      & $\epsilon =0.005$  & $\epsilon =0$      & $\epsilon =0.005$ & $\epsilon =0$     & \multicolumn{1}{c|}{$\epsilon =0.005$} \\
			\hline
			\multicolumn{1}{|c|}{\multirow{7}[1]{*}{\begin{sideways}T=50\end{sideways}}} & AUU   & \textbf{0.061} & 0.061 & 2.5348 & 2.5348 & 38    & \multicolumn{1}{c|}{38} & 0.0635 & 0.0633 & 3.6097 & 3.4895 & {37}    & \multicolumn{1}{c|}{38} \\
			& DLSW  & 0.1428 & 0.1159 & 7.4127 & 7.2898 & {19}    & \multicolumn{1}{c|}{{20}} & \textbf{0.0577} & \textbf{0.0577} & 3.3893 & 3.3893 & 45    & \multicolumn{1}{c|}{45} \\
			& KTW   & 0.0633 & 0.0633 & 1.4557 & 1.586 & 43    & \multicolumn{1}{c|}{42} & 0.0689 & 0.0689 & 2.0931 & 2.0931 & 43    & \multicolumn{1}{c|}{43} \\ 
			& UB    & 0.415 & 0.0489 & 5.4621 & 0.6262 & 154   & 164 (44) & 0.2163 & 0.078 & 2.5031 & 0.9474 & 150   & 164 (44) \\ 
			& R     & 0.201 & 0.1922 & 2.7797 & 1.9551 & 156   & 161.8 (27.8) & 0.5199 & 0.1298 & 6.977 & 4.3267 & 158   & 161.6 ({26.2}) \\ 
			& C     & 0.0925 & \textbf{0.0472} & 1.1053 & 0.7466 & 133   & 162 (26) & 0.0659 & 0.0659 & 1.0019 & 1.0321 & 159   & 157 (34) \\ 
			& Adj   & 0.0775 & 0.0709 & \textbf{0.6087} & \textbf{0.6229} & 156   & 161 (26) & 0.0658 & 0.0658 & \textbf{0.7674} & \textbf{0.7543} & 159   & 163 (28) \\
			\hline
			\multicolumn{1}{|c|}{\multirow{7}[2]{*}{\begin{sideways}T=100\end{sideways}}} & AUU   & \textbf{0.0293} & \textbf{0.0319} & 1.5539 & 1.5924 & 59    & \multicolumn{1}{c|}{58} & 0.0394 & 0.0393 & 2.4166 & 2.4045 & {48}    & \multicolumn{1}{c|}{{49}} \\
			& DLSW  & 0.098 & 0.098 & 3.3152 & 3.3694 & {28}    & \multicolumn{1}{c|}{{27}} & \textbf{0.0333} & 0.0333 & 1.372 & 1.372 & 82    & \multicolumn{1}{c|}{82} \\ 
			& KTW   & 0.0355 & 0.0355 & 0.8512 & 0.8469 & 71    & \multicolumn{1}{c|}{72} & 0.0448 & 0.0448 & 2.5774 & 2.5774 & 68    & \multicolumn{1}{c|}{68} \\ 
			& UB    & 0.415 & 0.0489 & 5.4621 & 0.3433 & 248   & 249 (72) & 0.2163 & \textbf{0.0286} & 2.3386 & {-} & 240   & 261 (81) \\ 
			& R     & 0.1433 & 0.087 & 1.0544 & 1.4441 & 283   & 323.2 (114.4) & 0.2185 & 0.0732 & 2.042 & 1.3026 & 285   & 322.6 (106.6) \\ 
			& C     & 0.0448 & 0.0447 & 0.4774 & 0.3685 & 281   & 326 (111) & 0.0455 & 0.0659 & 0.51  & 0.6218 & 281   & 314 (103) \\
			& Adj   & 0.0422 & 0.0329 & \textbf{0.2926} &\textbf{ 0.29 } & 280   & 319 (112) & 0.0382 & 0.0345 & \textbf{0.4494} & \textbf{0.3984} & 287   & 323 (115) \\
			\hline
			\multicolumn{1}{|c|}{\multirow{7}[2]{*}{\begin{sideways}T=200\end{sideways}}} & AUU   & \textbf{0.016} & 0.0151 & 0.6718 & 0.6417 & 109   & \multicolumn{1}{c|}{114} & 0.0352 & {-} & 1.265 & {-} & 130   & -   \\
			& DLSW  & 0.0546 & 0.0546 & 2.4949 & 2.3092 & {40}    & \multicolumn{1}{c|}{{41}} & \textbf{0.0201} & 0.0201 & 0.7286 & 0.7329 & 154   & \multicolumn{1}{c|}{154} \\ 
			& KTW   & 0.0213 & 0.0213 & 0.5353 & 0.4808 & 122   & \multicolumn{1}{c|}{128} & 0.0225 & 0.0225 & 0.6514 & 0.667 & {129}   & \multicolumn{1}{c|}{{129}} \\
			& Ub    & 0.415 & \textbf{0.0108} & 5.462 & 0.1769 & 421   & 545 (275) & {-} &\textbf{ 0.0148 }& {-} & {-} & - & 427 (277) \\
			& R     & 0.1089 & 0.0405 & 1.6491 & 0.2699 & 464.6 & 650.2 (285.2) & 0.1281 & 0.0197 & 5.9576 & 0.6547 & 466.6 & 466.6 (297.8) \\ 
			& C     & 0.0448 & 0.0228 & 0.3711 & 0.2064 & 474   & 650 (279) & 0.0276 & 0.0659 & \textbf{0.5335} & 0.436 & 472   & 472 (288) \\
			& Adj   & 0.0405 & 0.0178 & \textbf{0.1778} & \textbf{0.177} & 463   & 641 (302) & 0.0374 & 0.0185 & 0.8885 &\textbf{ 0.394} & 469   & 469 (304) \\ 
			\hline
			\cline{3-14}    \multicolumn{1}{c}{} & \multicolumn{1}{c|}{} & \multicolumn{6}{c|}{Example 3, $a=10$}                & \multicolumn{6}{c|}{Example 3, $a=20$}  \\
			\cline{3-14}    \multicolumn{1}{c}{} & \multicolumn{1}{c|}{} & \multicolumn{2}{c}{Err} & \multicolumn{2}{c}{HG} & \multicolumn{2}{c|}{Card} & \multicolumn{2}{c}{Err} & \multicolumn{2}{c}{HG} & \multicolumn{2}{c|}{Card} \\
			\cline{3-14}    \multicolumn{1}{c}{} & \multicolumn{1}{c|}{} & $\epsilon =0$     & $\epsilon =0.005$ & $\epsilon =0$    & $\epsilon =0.005$ & $\epsilon =0$     & \multicolumn{1}{c|}{$\epsilon =0.005$ } & $\epsilon =0$      & $\epsilon =0.005$  & $\epsilon =0$      & $\epsilon =0.005$ & $\epsilon =0$     & \multicolumn{1}{c|}{$\epsilon =0.005$} \\
			\hline
			\multicolumn{1}{|c|}{\multirow{7}[1]{*}{\begin{sideways}T=50\end{sideways}}} & AUU   & \textbf{0.0603} & \textbf{0.0603} & 5.2053 & 5.2053 & 38    & \multicolumn{1}{c|}{38} & 0.0789 & 0.0789 & 12.7094 & 11.3389 & {33}    & \multicolumn{1}{c|}{33} \\
			& DLSW  & 0.1802 & 0.1802 & 8.6532 & 8.6532 & {23}    & \multicolumn{1}{c|}{23} & \textbf{0.0762} & \textbf{0.0741} & 8.8851 & 8.6309 & 45    & \multicolumn{1}{c|}{46} \\
			& KTW   & 0.064 & 0.0835 & 2.0931 & 3.243 & 48    & \multicolumn{1}{c|}{42} & 0.0945 & 0.0945 & 5.8671 & 5.8671 & 45    & \multicolumn{1}{c|}{45} \\
			& UB     & 0.5282 & 0.0996 & {-} & \textbf{1.1499} & 155   & 160 (38) & 0.3833 & 0.123 & 37.5565 & \textbf{3.9389} & 136   & 155 (34) \\
			& R     & 0.184 & 0.1225 & 5.8429 & 4.0538 & 157.8 & 160 (28.4) & 0.2774 & 0.2773 & 22.452 & 25.2676 & 157.4 & 161 ({22.8}) \\
			& C     & 0.1105 & 0.139 & 1.4924 & 1.5941 & 158   & 154 ({22}) & 0.1079 & 0.1303 & \textbf{3.6095} & 7.1808 & 155   & 164 (38) \\
			& Adj    & 0.1004 & 0.1004 & \textbf{1.3138} & 1.5617 & 159   & 162 (30) & 0.2199 & 0.1601 & 4.2782 & 5.4184 & 157   & 163 (23) \\
			\hline
			\multicolumn{1}{|c|}{\multirow{7}[2]{*}{\begin{sideways}T=100\end{sideways}}} & AUU   &\textbf{ 0.03}  & \textbf{0.0305} & 2.6334 & 2.6359 & 69    & \multicolumn{1}{c|}{68} & \textbf{0.0335} & \textbf{0.0339} & 5.9792 & 5.9851 & {67}    & \multicolumn{1}{c|}{{66}} \\
			& DLSW  & 0.1122 & 0.1122 & 7.8622 & 7.704 & {29}    & \multicolumn{1}{c|}{{30}} & 0.037 & 0.037 & 4.6299 & 4.7261 & 86    & \multicolumn{1}{c|}{85} \\
			& KTW   & 0.0635 & 0.0561 & 2.1896 & 1.9612 & 54    & \multicolumn{1}{c|}{58} & 0.0774 & 0.0774 & 3.2962 & 3.3143 & 77    & \multicolumn{1}{c|}{76} \\
			& UB    & 0.5282 & 0.0584 & 20.5524 & \textbf{0.5731} & 254   & 257 (77) & 0.3833 & 0.0584 & 37.5564 & 1.5682 & 268   & 292 (85) \\
			& R     & 0.3065 & 0.0929 & 10.6075 & 1.0173 & 282.8 & 316.8 (95.4) & 0.0985 & 0.1333 & 4.409 & 5.6698 & 312.8 & 312.8 (91) \\
			& C     & 0.0444 & 0.0422 & 0.9645 & 0.8517 & 267   & 312 (102) & 0.0472 & 0.0931 & \textbf{1.8988} & 1.9479 & 277   & 319 (126) \\
			& Adj    & 0.0477 & 0.0477 &\textbf{ 0.6971} & 0.5908 & 283   & 323 (110) & 0.1594 & 0.0903 & 2.0781 & \textbf{1.3881} & 285   & 315 (95) \\    
			\hline
			\multicolumn{1}{|c|}{\multirow{7}[2]{*}{\begin{sideways}T=200\end{sideways}}} & AUU    & \textbf{0.0221} & 0.0222 & 1.5587 & 1.5863 & 92    & \multicolumn{1}{c|}{90} & - & - & - & - & - & - \\
			& DLSW  & 0.0588 & 0.0603 & 4.8736 & 5.3437 & {58}    & \multicolumn{1}{c|}{{57}} & \textbf{0.0204} & 0.0204 & 2.0845 & 2.0861 & 158   & \multicolumn{1}{c|}{157} \\
			& KTW    & 0.0425 & 0.0425 & 1.2532 & 1.2717 & 116   & \multicolumn{1}{c|}{115} & 0.0774 & 0.0774 & 2.8347 & 2.8347 & {92}    & \multicolumn{1}{c|}{{92}} \\
			& Ub     & 0.5282 & \textbf{0.0154} & 20.5524 & 0.3031 & 423   & 541 (296) & 0.3832 & 0.0239 & 32.564 & \textbf{0.7241} & 423   & 522 (286) \\
			& R      & 0.0899 & 0.0839 & 1.2446 & 0.4661 & 463   & 628.8 (299.6) & 0.0778 & 0.0606 & 4.2709 & 1.6708 & 464   & 619 (320.4) \\
			& C      & 0.0366 & 0.0422 & 0.3878 & 0.3943 & 467   & 640 (296) & 0.0469 & \textbf{0.0168} & 2.3304 & 2.0154 & 466   & 632 (299) \\
			& Adj   & 0.0438 & 0.0319 & \textbf{0.3796} & \textbf{0.2962} & 468   & 623 (314) & 0.0903 & 0.0476 & \textbf{1.6607} & 1.0872 & 465   & 611 (330) \\
			\hline
			\hline
		\end{tabular}%
	}
	\label{tab:app}%
\end{table}


\end{document}